\DeclareMathOperator{\id}{id}
\DeclareMathOperator{\im}{im}
\DeclareMathOperator{\rank}{rank}
\DeclareMathOperator{\supp}{supp}
\newcommand{\ZZ}{\mathbb{Z}}
\newcommand{\NN}{\mathbb{N}}
\newcommand{\DD}{\mathsf{D}}
\newcommand{\GG}{\mathcal{G}}
\newcommand{\orb}{\operatorname{Orb}}
\newcommand{\G}{\mathcal{G}}
\newcommand{\spann}{\textrm{span}}
\newcommand{\coker}{\operatorname{coker}}
\newtheorem{lemma}{Lemma}[section]
\newtheorem{corollary}[lemma]{Corollary}
\newtheorem{theorem}[lemma]{Theorem}
\newtheorem{introtheorem}{Theorem}
\newtheorem*{theorem*}{Theorem}
\newtheorem{proposition}[lemma]{Proposition}
\newtheorem*{matuiAH*}{Matui's AH~Conjecture} % For writing the AH conjecture as a ``theorem environment''
\theoremstyle{definition}
\newtheorem{definition}[lemma]{Definition}
\newtheorem{example}[lemma]{Example}
\newtheorem{remark}[lemma]{Remark}
\def\l@section{\@tocline{1}{0pt}{1pc}{}{}}
\def\l@subsection{\@tocline{2}{0pt}{1pc}{4.6em}{}}
\def\l@subsubsection{\@tocline{3}{0pt}{1pc}{7.6em}{}}
\renewcommand{\tocsection}[3]{%
  \indentlabel{\@ifnotempty{#2}{\makebox[2.3em][l]{%
    \ignorespaces#1 #2.\hfill}}}#3}
\renewcommand{\tocsubsection}[3]{%
  \indentlabel{\@ifnotempty{#2}{\hspace*{2.3em}\makebox[2.3em][l]{%
    \ignorespaces#1 #2.\hfill}}}#3}
\renewcommand{\tocsubsubsection}[3]{%
  \indentlabel{\@ifnotempty{#2}{\hspace*{4.6em}\makebox[3em][l]{%
    \ignorespaces#1 #2.\hfill}}}#3}
\newcommand\@dotsep{4.5}
\def\@tocline#1#2#3#4#5#6#7{\relax
  \ifnum #1>\c@tocdepth % then omit
  \else
    \par \addpenalty\@secpenalty\addvspace{#2}%
    \begingroup \hyphenpenalty\@M
    \@ifempty{#4}{%
      \@tempdima\csname r@tocindent\number#1\endcsname\relax
    }{%
      \@tempdima#4\relax
    }%
    \parindent\z@ \leftskip#3\relax
    \advance\leftskip\@tempdima\relax
    \rightskip\@pnumwidth plus1em \parfillskip-\@pnumwidth
    #5\leavevmode\hskip-\@tempdima #6\relax
    \leaders\hbox{$\m@th
      \mkern \@dotsep mu\hbox{.}\mkern \@dotsep mu$}\hfill
    \hbox to\@pnumwidth{\@tocpagenum{#7}}\par
    \nobreak
    \endgroup
  \fi}
 \def\l@subsection{\@tocline{2}{0pt}{30pt}{5pc}{}}
\title[Matui's AH~conjecture for Graph Groupoids]{Matui's AH~conjecture for Graph Groupoids}
\author[1]{Petter Nyland}
\author[2]{Eduard Ortega}
\address{Department of Mathematical Sciences, Faculty of Information Technology and Electrical Engineering, NTNU -- Norwegian University of Science and Technology, Trondheim, Norway}
\email{petter.nyland@ntnu.no}
\address{Department of Mathematical Sciences, Faculty of Information Technology and Electrical Engineering, NTNU -- Norwegian University of Science and Technology, Trondheim, Norway}
\email{eduard.ortega@ntnu.no}
\subjclass[2010]{Primary 22A22, Secondary 05C63, 19D55, 37B05, 46L05}
\keywords{Ample groupoid, homology of étale groupoids, topological full group, graph~groupoid, AF-groupoid, graph $C^*$-algebra}
\numberwithin{equation}{section} % Nummerer ligninger etter seksjon istedenfor globalt
\begin{document}

\begin{abstract}
We prove that Matui's AH~conjecture holds for graph groupoids of infinite graphs. This is a conjecture which relates the topological full group of an ample groupoid with the homology of the groupoid. Our main result complements Matui's result in the finite case, which makes the AH~conjecture true for all graph groupoids covered by the assumptions of said conjecture. Furthermore, we observe that for arbitrary graphs, the homology of a graph groupoid coincides with the $K$-theory of its groupoid \mbox{$C^*$-algebra}.
\end{abstract} 

% Old abstract: It is proven that graph groupoids of infinite graphs satisfy Matui's AH~conjecture. This is a conjecture which relates the topological full group of an ample groupoid with its homology. The main result complements Matui's result in the finite case, which means that all graph groupoids covered by the assumptions of the AH~conjecture do satisfy the conjecture. This also applies to any ample groupoid Kakutani equivalent to such a graph groupoid. We also investigate whether the strong AH~property is satisfied.

% Oldold: In fact more is proved, the result applies to restrictions of graph groupoids, and then in particular to all groupoids Kakutani equivalent to a graph groupoid. We also investigate whether the strong AH~property is satisfied for this class. 

\maketitle

 \tableofcontents 	% TOC XXX

% XXX HIDELINKS I HYPE RREF ER PÅ (SÅ FÅR IKKE FARGER RUNDT REFERANSER I DOKUMENTET....) XXXX

 % Endre skriftstørrelsen???

% Starter litt for mange setninger med ``And'' kanskje. Prøv å omskriv noen til slutt.

% Vær konsistent med å bruke kursiv i inline definisjoner (spesielt seksjon2), samt bruk av $\coloneqq$ vs vanlig $=$ i definisjoner...  

%\todo[inline]{Add TOC!}

\section{Introduction}
\subsection{Background}

Building on the discoveries in the series of papers~\cite{MatRem}, \cite{MatHom} and~\cite{MatTFG}  Hiroki Matui stated two conjectures concerning effective minimal étale groupoids over Cantor spaces in~\cite{MatProd}.
%certain étale groupoids over Cantor spaces in~\cite{MatProd}.
%The \emph{HK~conjecture} predicts that the $K$-theory of a reduced groupoid $C^*$-algebra is isomorphic to the direct sum of all the even and odd homology groups of the groupoid, respectively.
The \emph{HK~conjecture} predicts that the \mbox{$K$-theory} of a reduced groupoid \mbox{$C^*$-algebra} is determined by the groupoid's homology as follows:
\[ K_0\left(C^*_r(\mathcal{G})\right) \cong \bigoplus_{n=0}^\infty H_{2n}(\mathcal{G}) \quad \text{and} \quad K_1\left(C^*_r(\mathcal{G})\right) \cong \bigoplus_{n=0}^\infty H_{2n+1}(\mathcal{G}).  \]
 The \emph{AH~conjecture} predicts that the abelianization of the topological full group of a groupoid together with its first two homology groups fit together in an exact sequence as follows: \[\begin{tikzcd}
H_0(\mathcal{G}) \otimes \mathbb{Z}_2 \arrow{r}{j} & \llbracket \mathcal{G} \rrbracket_{\text{ab}} \arrow{r}{I} & H_1(\mathcal{G}) \arrow{r} & 0.
\end{tikzcd}  \]
In several cases (including graph groupoids) the $K$-groups actually coincide with the two first homology groups, which means that the AH~conjecture in these cases relates the $K$-theory of the groupoid \mbox{$C^*$-algebra} with the topological full group. 

Topological full groups associated to dynamical systems (and more generally to étale groupoids) are perhaps best known for being complete invariants for continuous orbit equivalence (and groupoid isomorphism). And also for diagonal preserving isomorphism of the associated \mbox{$C^*$-algebrs}. Roughly speaking, the topological full group consists of all homeomorphisms which preserve the orbits of the dynamical system in a continuous manner. Consult~\cite{GPS99}, \cite{Med11}, \cite{Matsumoto}, \cite{MatTFG}, \cite{NO} and~\cite{CGW} for some of these rigidity results. Topological full groups also provide means of constructing new groups with interesting properties, most notably by providing the first examples of finitely generated simple groups that are amenable (and infinite)~\cite{JM}.

In the works of Matui mentioned above, both conjectures were verified for key classes of groupoids, such as AF-groupoids, transformation groupoids of minimal $\ZZ$-actions and groupoids associated to shifts of finite type (\emph{SFT-groupoids}). Subsequently, other authors have expanded upon this. The HK~conjecture has been shown to hold for Katsura--Exel--Pardo groupoids~\cite{Ort}, Deaconu--Renault groupoids of rank~$1$ and $2$~\cite{FKPS} and groupoids of unstable equivalence relations on one-dimensional solenoids~\cite{Yi20}.

Alas, the HK~conjecture is now known to be false in general. It fails to hold for transformation groupoids associated to odometers on the infinite dihedral group, as demonstrated in~\cite{Scarp}. Nevertheless, it is still interesting to investigate for which groupoids the conclusion of the HK~conjecture holds. We will say that a groupoid has the \emph{HK~property} when this is the case. In spite of them providing counterexamples to the HK~conjecture, the AH~conjecture was shown, also in~\cite{Scarp}, to hold for transformation groupoids arising from odometers. Hence the AH~conjecture remains open. A notable difference between the two conjectures is that in the AH~conjecture the maps involved are specified, whereas in the HK~conjecture it is only predicted that some isomorphisms exist.

% Naively it is more ``probable'' to hold if the maps are not specified, because then any maps can potentially be used lol....... so in this way it makes no sense..., however we like naturality, so of course nicer when the maps are specified...

%One reason to believe that the AH~conjecture might be true, even though the HK~conjecture is not, is that in the former the actual maps are specified, whereas in the latter it was only conjectured to exist \emph{some} (unspecified) isomorphisms. 

\subsection{Our results}

The purpose of this paper is to investigate the AH~conjecture for the class of graph groupoids. As the SFT-groupoids prominently studied by Matui can be realized as graph groupoids of finite  graphs, the novelty lies in dealing with infinite (directed) graphs. In particular with the presence of infinite emitters, i.e.\ vertices that emit infinitely many edges. 

Our main motivating example has been the graph $E_\infty$ which has one vertex and infinitely many loops. The graph groupoid $\mathcal{G}_{E_\infty}$ is the canonical groupoid model for the (infinitely generated) Cuntz algebra~$\mathcal{O}_\infty$. This was a natural example to explore as~$E_\infty$ is the simplest possible graph having an infinite emitter. On the other hand, its graph~\mbox{$C^*$-algebra}~$\mathcal{O}_\infty$ has played---and continues to play---an important role in the theory of  \mbox{$C^*$-algebras}. Seeing as the topological full groups of the canonical graph groupoid models of the other Cuntz algebras $\mathcal{O}_n$ are isomorphic to the highly interesting \mbox{Higman--Thompson} groups~$V_{n,1}$, we believe it worthwhile to also investigate the topological full group $\llbracket \mathcal{G}_{E_\infty} \rrbracket$. 

One of the assumptions in the AH~conjecture is that the unit space of the groupoid is compact, and this translates into the underlying graph having finitely many vertices. We were indeed able to show that the AH~conjecture holds for these graph groupoids as well, so that our main result is the following. 

\begin{introtheorem}[see Corollary~\ref{cor:AH}]\label{thm:AHintro}
Let $E$ be a strongly connected graph with finitely many vertices which is not a cycle graph. Then the AH~conjecture holds for the graph groupoid~$\mathcal{G}_E$.
\end{introtheorem}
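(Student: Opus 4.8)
The plan is to realize the statement as an instance of the general machinery for the AH~conjecture, so the first task is to verify that $\mathcal{G}_E$ satisfies its standing hypotheses and then to construct the maps $j$ and $I$ and prove exactness. I would check that strong connectivity makes $\mathcal{G}_E$ minimal (every orbit in the boundary path space is dense), that the assumption that $E$ is not a cycle graph forces $\mathcal{G}_E$ to be effective and its unit space $\partial E$ to be perfect, and that finiteness of the vertex set makes $\partial E$ compact; together these make $\partial E$ a Cantor space and render the conjecture applicable. The content is then the exactness of
\[
H_0(\mathcal{G}_E) \otimes \mathbb{Z}_2 \xrightarrow{\ j\ } \llbracket \mathcal{G}_E \rrbracket_{\mathrm{ab}} \xrightarrow{\ I\ } H_1(\mathcal{G}_E) \to 0 .
\]

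The engine of the proof is the canonical length cocycle $c \colon \mathcal{G}_E \to \ZZ$ and its kernel $\mathcal{H} = c^{-1}(0)$, the same-length tail equivalence on $\partial E$, which is an AF-groupoid. Using the induced $\ZZ$-grading I would first compute homology via Matui's long exact sequence
\[
\cdots \to H_n(\mathcal{H}) \xrightarrow{\ \mathrm{id} - \alpha_*\ } H_n(\mathcal{H}) \to H_n(\mathcal{G}_E) \to H_{n-1}(\mathcal{H}) \to \cdots ,
\]
where $\alpha$ is the shift automorphism of $\mathcal{H}$. Since $\mathcal{H}$ is AF, $H_n(\mathcal{H}) = 0$ for $n \geq 1$, so the sequence identifies $H_1(\mathcal{G}_E)$ with $\ker(\mathrm{id} - \alpha_*)$ and $H_0(\mathcal{G}_E)$ with $\coker(\mathrm{id} - \alpha_*)$, and gives $H_n(\mathcal{G}_E) = 0$ for $n \geq 2$; under the identification of $H_0(\mathcal{H})$ with the relevant dimension group, $\mathrm{id} - \alpha_*$ is the homomorphism $\ZZ^{V_{\mathrm{reg}}} \to \ZZ^{V}$ determined by the reduced vertex matrix of $E$, exactly as for the $K$-theory of $C^*(E)$. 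Surjectivity of $I$ then follows by realizing each class of $H_1(\mathcal{G}_E)$, represented by a $\ZZ$-combination of loops in $E$, as the bisection class of an explicit full-group element that cyclically permutes the cylinder sets along the corresponding loop.

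For exactness at the middle term I would define $j$ on the class $[\mathbf{1}_Y] \otimes 1$ of a clopen set $Y$ by the involution transposing $Y$ with a disjoint $\mathcal{G}_E$-equivalent copy, and check directly that such transpositions have vanishing index, giving $I \circ j = 0$. The substantive inclusion is $\ker I \subseteq \im j$: given $\pi \in \llbracket \mathcal{G}_E \rrbracket$ with $I(\pi) = 0$, I would use the vanishing of the index to conjugate and multiply $\pi$, modulo commutators, into the topological full group of the AF-subgroupoid $\mathcal{H}$, and then invoke the AF case, where $\llbracket \mathcal{H} \rrbracket_{\mathrm{ab}} \cong H_0(\mathcal{H}) \otimes \mathbb{Z}_2$ is computed by the signs of the induced permutations of cylinder sets. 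Since the subgroup $(\mathrm{id} - \alpha_*)H_0(\mathcal{H})$ transposes pairs of clopen sets that are related by the shift, hence conjugate, these contributions become commutators, so the sign map descends to $H_0(\mathcal{G}_E) \otimes \mathbb{Z}_2 = \coker(\mathrm{id}-\alpha_*) \otimes \mathbb{Z}_2$, placing $[\pi]$ in $\im j$.

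The main obstacle is the presence of infinite emitters. Near such a vertex the Bratteli diagram of $\mathcal{H}$ has infinite edge multiplicities, so a full-group element may permute infinitely many cylinder sets at once, and the finite symmetric-group arguments available in the shift of finite type case no longer apply verbatim. The delicate points are therefore, first, to show that the sign datum defining $j$ and the reduction of index-zero elements to $\llbracket \mathcal{H} \rrbracket$ remain well defined for bisections whose support meets an infinite emitter, and second, to ensure that the passage from $H_0(\mathcal{H})$ to $H_0(\mathcal{G}_E)$ correctly accounts for the reduced vertex matrix (regular vertices only); this is where the combinatorial bookkeeping specific to the infinite-emitter setting must be carried out carefully.
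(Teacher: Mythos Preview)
Your outline has the right skeleton, but there are two genuine gaps that would prevent the argument from going through as written.

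First, the long exact sequence you propose, involving $H_n(\mathcal{H})$ and a \emph{shift automorphism} $\alpha$, does not exist in the infinite-emitter setting. The long exact sequence attached to the cocycle $c$ runs through the homology of the skew product $\mathcal{G}_E \times_c \ZZ$, not of $\mathcal{H}$. In the row-finite case one can identify $H_0(\mathcal{H})$ with $H_0(\mathcal{G}_E \times_c \ZZ)$ because $\partial E \times \{0\}$ is full in the skew product, and then your sequence is correct; but when $E$ has an infinite emitter that fullness fails, and one can show $\partial E \times \{0\}$ is \emph{never} full. Correspondingly, the shift is not an automorphism of $H_0(\mathcal{H})$: only the ``extend paths backwards'' direction is globally defined, so one has an endomorphism $\varphi$, not an automorphism $\alpha$. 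The paper must therefore argue separately that $H_0(\mathcal{H}_E)$ embeds into $H_0(\mathcal{G}_E \times_c \ZZ)$ and that $\ker(\id - H_0(\rho_\bullet))$ actually lands in the image of this embedding (Lemma~\ref{lem:kerker}); the latter is a nontrivial computation using that singular vertices give free summands, and your proposal does not account for it.

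Second, your sketch for $\ker I \subseteq \im j$ is where the real work lies, and ``conjugate and multiply, modulo commutators, into $\llbracket \mathcal{H} \rrbracket$'' is not yet a strategy. The paper's route is to show $\mathcal{G}_E$ is purely infinite and invoke Matui's result that the AH~conjecture is then equivalent to \emph{Property~TR} (that $\ker I$ is generated by transpositions). Establishing Property~TR is the heart of the argument (Theorem~\ref{thm:TR}): one associates to $\alpha$ its graded partition $\{S_\alpha(k)\}$, and the goal is to produce a product of transpositions $\tau$ with the \emph{same} graded partition, so that $\beta\tau^{-1}$ lies in $\llbracket \mathcal{H}_E \rrbracket$ and the AF case applies. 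In the finite case this uses the Perron eigenvalue and the unique $\mathcal{H}_E$-invariant probability measure to compare clopen sets; with infinite emitters $\mathcal{H}_E$ is not minimal and there is no Perron theory, so the paper replaces this by an explicit combinatorial construction of disjoint paths (Lemma~\ref{lem:longlemma}) enabled by first applying Sørensen's move~(T) to the graph. Your final paragraph correctly identifies the difficulty but does not indicate any mechanism to overcome it, and the ``sign datum'' description you give, while appropriate for almost finite groupoids, is not how the argument proceeds in the purely infinite case.
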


Let us remark that Corollary~\ref{cor:AH} applies to a slightly more general family of graphs than in the preceding theorem, as well as to all restrictions of these graph groupoids. The conclusion is that the AH~conjecture holds for all graph groupoids covered by the assumptions in said conjecture. Additionally, it holds for any groupoid which is Kakutani equivalent to such a graph groupoid.

%AH IS TRUE FOR ALL.. DONT SATISFY THE AH CONJECTURE 
%AH holds
%VIKTIG: IS TRUE FOR! XXX XXX

It should be mentioned that Matui in~\cite{MatTFG} not only proved that the AH~conjecture is true for restrictions of SFT-groupoids, but that these also have the \emph{strong AH~property}. This means that the map $j$ is injective, so that one has a short exact sequence. This was done by constructing a suitable finite presentation of the topological full group. We investigate this subject in Section~\ref{sec:examples}, but we find that when the graph has an infinite emitter, then the topological full group is not even finitely generated. 

We also observe that all graph groupoids have the HK~property. The following theorem is but a small extension of already existing results (see the paragraph following Theorem~\ref{thm:graphhom}). 
\begin{introtheorem}[see Theorem~\ref{thm:graphhom}]\label{thm:graphhomintro}
Let $E$ be any graph. Then 
\begin{align*}
H_0(\mathcal{G}_E) & \cong K_0(C^*(E)), \\
H_1(\mathcal{G}_E) & \cong K_1(C^*(E)), \\
H_n(\mathcal{G}_E) &= 0, \quad n \geq 2.
\end{align*}
\end{introtheorem}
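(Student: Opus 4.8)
The plan is to show that the groupoid homology $H_*(\mathcal{G}_E)$ and the $K$-theory $K_*(C^*(E))$ are computed by one and the same homomorphism attached to the graph, namely the restricted vertex matrix. Recall first the standard computation of the $K$-theory of a graph $C^*$-algebra: writing $E^0_{\mathrm{reg}}$ for the set of regular vertices (those that are neither sinks nor infinite emitters) and $A_E$ for the vertex matrix of $E$, one has $K_0(C^*(E)) \cong \coker B_E$ and $K_1(C^*(E)) \cong \ker B_E$, where
\[ B_E \colon \ZZ^{(E^0_{\mathrm{reg}})} \longrightarrow \ZZ^{(E^0)} \]
is the homomorphism obtained from $\operatorname{id} - A_E^{t}$ by restricting the domain to the regular vertices. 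It therefore suffices to prove that $H_0(\mathcal{G}_E) \cong \coker B_E$, $H_1(\mathcal{G}_E) \cong \ker B_E$ and $H_n(\mathcal{G}_E) = 0$ for $n \geq 2$, with the \emph{same} matrix $B_E$; the three isomorphisms of the theorem then follow.

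For the homological side I would exploit the Deaconu--Renault structure of $\mathcal{G}_E$. The groupoid carries its canonical degree cocycle $c \colon \mathcal{G}_E \to \ZZ$, and I set $\mathcal{R} := c^{-1}(0)$, the tail (cofinality) equivalence groupoid on the boundary path space. One checks that $\mathcal{R}$ is an increasing union of compact open, principal subgroupoids indexed by path length, hence is an AF-groupoid; consequently $H_n(\mathcal{R}) = 0$ for all $n \geq 1$, while $H_0(\mathcal{R})$ is a direct limit of free abelian groups. Feeding this into the long exact sequence in homology associated to the $\ZZ$-grading given by $c$ (the groupoid analogue of the Pimsner--Voiculescu sequence, cf.\ \cite{MatHom}), all terms $H_n(\mathcal{R})$ with $n \geq 1$ vanish and the sequence collapses to $H_n(\mathcal{G}_E) = 0$ for $n \geq 2$ together with the four-term exact sequence
\[ 0 \to H_1(\mathcal{G}_E) \to H_0(\mathcal{R}) \xrightarrow{\ \operatorname{id} - \sigma_* \ } H_0(\mathcal{R}) \to H_0(\mathcal{G}_E) \to 0, \]
where $\sigma_*$ is the endomorphism of $H_0(\mathcal{R})$ induced by the shift.

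It then remains to identify $H_0(\mathcal{R})$ together with $\operatorname{id} - \sigma_*$ with the matrix $B_E$. To do this I would use the classes of the vertex cylinder sets $Z(v)$ as generators of $H_0(\mathcal{R})$ and track how the shift moves them: decomposing a cylinder along the edges it emits yields, for each regular vertex $v$, a relation of the form $[Z(v)] = \sum_w A_E^{t}(v,w)\,[Z(w)]$, which is exactly the $v$-th column of $B_E$. Unwinding the direct limit, a routine computation then gives $\coker(\operatorname{id} - \sigma_*) \cong \coker B_E$ and $\ker(\operatorname{id} - \sigma_*) \cong \ker B_E$, completing the identification.

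The point where genuine care is needed---and where the passage from finite to infinite graphs really bites---is precisely this last identification in the presence of infinite emitters. For a sink or an infinite emitter $v$ the cylinder $Z(v)$ does not split as a \emph{finite} disjoint union over emitted edges, so it contributes no relation; this is what forces the domain of the boundary map to be $\ZZ^{(E^0_{\mathrm{reg}})}$ rather than $\ZZ^{(E^0)}$, mirroring exactly the regular-vertex restriction in the $K$-theory matrix $B_E$. One must also verify that $\mathcal{R}$ is genuinely AF and that the long exact sequence applies when the shift is only a local homeomorphism rather than a global one, so that $\mathcal{G}_E$ is treated as a Deaconu--Renault groupoid rather than an honest transformation groupoid $\mathcal{R} \rtimes \ZZ$. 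Once these points are settled the two computations are visibly given by the same $B_E$ and the stated isomorphisms follow; alternatively one could quote an existing computation of $H_*(\mathcal{G}_E)$ from the Deaconu--Renault literature and simply compare matrices, which is the sense in which the result is a small extension of what is already known.
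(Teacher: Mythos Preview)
Your approach has a genuine gap at the step where you invoke the long exact sequence. The Pimsner--Voiculescu type sequence for a cocycle $c\colon\mathcal G\to\ZZ$ (as in \cite{MatHom} and \cite{Ort}, and recorded in the paper as Proposition~\ref{prop:SES1}) runs through the homology of the \emph{skew product} $\mathcal G_E\times_{c_E}\ZZ$, not of the kernel $\mathcal R=c^{-1}(0)=\mathcal H_E$. For row-finite graphs without sinks these two groupoids are Kakutani equivalent, so one may freely substitute one for the other; but the paper explicitly points out (Remark~\ref{rem:fullness}) that when $E$ has singular vertices, $\partial E\times\{0\}$ is \emph{never} full in $\mathcal G_E\times_{c_E}\ZZ$, and the identification $H_0(\mathcal H_E)\cong H_0(\mathcal G_E\times_{c_E}\ZZ)$ breaks down. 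So the four-term sequence you write with $H_0(\mathcal R)$ in the middle is not available without further argument, and your subsequent identification of $H_0(\mathcal R)$ with a free module on vertex classes is likewise too optimistic: $H_0(\mathcal H_E)$ is only a proper subgroup of $H_0(\mathcal G_E\times_{c_E}\ZZ)$ in general, and the analysis required to relate them (carried out in Section~\ref{sec:graphhom} of the paper for a different purpose) is exactly the ``genuine care'' you allude to but do not supply.

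The paper's own proof sidesteps all of this. It simply quotes the row-finite no-sinks case (known from \cite{Ort} and \cite{FKPS}) and then observes that an arbitrary graph groupoid is Kakutani equivalent to one of this form via Drinen--Tomforde desingularization \cite{DT05}. Since Kakutani equivalence preserves homology and the corresponding $C^*$-algebras are Morita equivalent (hence have the same $K$-theory), the theorem follows in one line. Your route can be salvaged by replacing $\mathcal R$ with the skew product throughout---then the computation mirrors the $K$-theory argument of \cite{RS} and goes through---but as written the substitution of $\mathcal R$ for the skew product is precisely the failure mode the paper warns about.
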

Here $C^*(E)$ denotes the graph \mbox{$C^*$-algebra} of $E$, which is canonically isomorphic to the groupoid \mbox{$C^*$-algebra} $C^*_r(\mathcal{G}_E)$. Since the $K$-groups of a graph \mbox{$C^*$-algebra} are relatively easy to compute, Theorem~\ref{thm:graphhomintro} allows us to give a partial description of the abelianization of the topological full group $\llbracket \mathcal{G}_E \rrbracket_{\text{ab}}$ via the AH~conjecture.

% XXX Here comes the most important part of the introduction, in some sense!!!
Our proof of the AH~conjecture for graph groupoids of infinite graphs will in broad strokes follow a similar strategy as Matui's proof for finite graphs from~\cite{MatTFG}. However, we emphasize that there are several major differences which make this a nontrivial generalization. There are steps and techniques in Matui's proof that no longer work---or even make sense---in the infinite setting. A couple of significant differences are described below.

If $E$ is a graph with infinite emitters (or sinks), then the unit space of its graph groupoid is no longer full in the associated skew product (compare~\cite[Lemma~6.1]{FKPS} and Remark~\ref{rem:fullness}). This means that we cannot deduce that the kernel of the canonical graph cocycle is Kakutani equivalent to the skew product, and in turn we cannot identify their homologies as is done in Matui's proof.  % just sat the thing about fullness lol...

%Old paragraph: If $E$ is a graph with infinite emitters (or sinks), then the graph groupoid $\mathcal{G}_E$ is no longer a \emph{Deaconu--Renault groupoid}. This is a well-studied class of groupoids (see e.g.~\cite{Dea}, \cite{ER07}, \cite{FKPS}), and we lose some techniques and properties by stepping outside this class. For instance, the unit space of the graph groupoid is no longer full in the associated skew product (compare Remark~\ref{rem:fullness} and~\cite[Lemma~6.1]{FKPS}). This means that we cannot deduce that the kernel of the canonical graph cocycle is Kakutani equivalent to the skew product, and in turn we cannot identify their homologies as is done in Matui's proof. 

A key component in Matui's proof is the reduction it to \emph{mixing} shifts of finite type. This is equivalent to the adjacency matrix of the associated finite graph  being \emph{primitive}. In this case, the kernel of the cocycle is a minimal AF-groupoid admitting a unique invariant probability measure arising from the Perron eigenvalue of the adjacency matrix. This measure can then be used to compare clopen subsets of the unit space and produce certain bisections connecting them. When passing to the infinite setting we lose all of this. We no longer have a shift of finite type (nor any shift space for that matter) and no Perron--Frobenius theory. Furthermore, the kernel of the cocycle is not minimal anymore. 

% Kan skrive kjapt en setning til hver om hvordan vi ``løser''/''omgår'' det?? e.g. med embeddings og restriksjoner over, og med moves on graphs og ``by hand'' i den andre?? + moves on graphs
%We do it ``by hands'' with our own technical combinatorial lemma... 

We also wish to remark that even though certain parts of the paper are quite similar to parts of~\cite[Section~6]{MatTFG}, such as Section~\ref{sec:indexmap} and the second half of the proof of Theorem~\ref{thm:TR}, we have chosen to keep the exposition mostly self-contained. We have done this in the best interest of the reader. For there are still subtle differences, such as indexes being shifted or reversed, and some steps being done in the opposite order. This is in part due to us having to consider the inverse of a certain map from Matui's proof, see Remarks~\ref{rem:MatuiDelta}  and~\ref{rem:deltadifference}. We supply several remarks along the way which compare our approach to Matui's to signify where they differ.

The work laid down in this paper is not  done with graph groupoids alone in mind. It is our belief that these techniques can also be applied to other groupoids which have an underlying ``graph skeleton'', such as groupoids arising from self-similar actions by groups on graphs, as studied by Nekrashevych~\cite{Nek09} and by Exel and Pardo~\cite{EP17}. The authors plan to explore this avenue in future work. Groupoids associated to $k$-graphs and ultragraphs are also obvious candidates.

\subsection{Summary}
%A summary of the paper is as follows.
We begin in Section~\ref{sec:etale} by giving the necessary background regarding étale groupoids.
%ample (i.e.~ groupoids over zero-dimensional spaces). 
This includes the topological full group, homology and skew products by cocycles. More background is given in Section~\ref{sec:graphs},  regarding graphs and their associated groupoids. The graph groupoid $\mathcal{G}_E$ associated to a graph~$E$ has a canonical~$\ZZ$-valued cocycle denoted~$c_E$. Both the skew product groupoid $\G_E \times_{c_E} \ZZ$ and the kernel subgroupoid~$\mathcal{H}_E \coloneqq \ker(c_E) \subseteq \mathcal{G}_E$ play important roles in the rest of the paper. We show that the graph groupoids of acyclic graphs are AF-groupoids. From this we deduce that both~$\G_E \times_{c_E} \ZZ$ and $\mathcal{H}_E$ are AF-groupoids.

In Section~\ref{sec:AH} we describe the AH~conjecture in more detail. One of the maps appearing in the AH~conjecture is the \emph{index map} $I \colon \llbracket \mathcal{G} \rrbracket \to H_1(\G)$.
%, where $\llbracket \mathcal{G} \rrbracket$ denotes the topological full group.
We extend its definition to groupoids with non-compact unit space. Then the assumptions in the AH~conjecture for graph groupoids are translated into properties of the underlying graphs. These turn out to be equivalent to the graph \mbox{$C^*$-algebra} being a unital Kirchberg algebra. We also note that all graph groupoids have the HK~property by combining known results in the row-finite case with the concept of desingularization. This yields Theorem~\ref{thm:graphhomintro}. The graph groupoids satisfying the assumptions in the AH~conjecture are shown to be purely infinite. It then follows from a result of Matui (see Remark~\ref{rem:TR}) that the AH~conjecture is equivalent to \emph{Property~TR}. Property~TR means that the kernel of the index map is generated by transpositions.  Hence the rest of the paper, except for the final section, is devoted to establishing Property~TR for these graph groupoids.

Section~\ref{sec:cancellation} is devoted to showing that all AF-groupoids have cancellation, something which is needed several times in the proof of the main result. We point out that this cancellation result may be of independent interest. Then in Section~\ref{sec:LES} we present two long exact sequences in ample groupoid homology. One of them relates the homology of a groupoid equipped with a cocycle with that of the associated skew product.  The other relates the homology of restrictions to nested invariant subsets.

Both of these long exact sequences are applied to graph groupoids in Section~\ref{sec:graphhom}.  This allows us to relate the homology of a graph groupoid $\mathcal{G}_E$ with both the skew product~$\G_E \times_{c_E} \ZZ$ and the kernel $\mathcal{H}_E$. As the latter two are AF-groupoids, this truncates the long exact sequences to finite exact sequences. After some work, we obtain the embeddings~$H_1(\mathcal{G}_E) \hookrightarrow H_0(\mathcal{H}_E) \hookrightarrow H_0(\G_E\times_{c_E} \ZZ)$. In particular, we identify~$H_1(\mathcal{G}_E)$ with~$\ker(\id - \varphi)$, where $\varphi$ is an endomorphism of $H_0(\mathcal{H}_E)$ given by ``extending paths backwards''. We have to do some extra work here because we cannot deduce that~$H_0(\mathcal{H}_E) \cong H_0(\G_E\times_{c_E} \ZZ)$, as one can for finite graphs. In Section~\ref{sec:indexmap} we associate each element $\alpha$ in the topological full group $\llbracket \mathcal{G}_E \rrbracket$ with a finite clopen partition of the unit space $\mathcal{G}_E^{(0)}$. This partition is then used to give a description of the value $I(\alpha)$ of the index map under the correspondence $H_1(\mathcal{G}_E) \cong \ker(\id - \varphi)$ from the previous section.

The proof of our main result, Theorem~\ref{thm:AHintro}, is given in Section~\ref{sec:TR}. We begin the section by proving a technical lemma which plays a similar role as mixing of the shift space does in Matui's proof for SFT-groupoids. The way it is used in our proof, however, is quite different from the way mixing is used. Next we show that the assumptions in said lemma can always be arranged, by appealing to the geometric moves on graphs from the classification program of unital graph \mbox{$C^*$-algebra}~\cite{ERRS}. After that we prove that strongly connected graphs with infinite emitters have Property~TR. The proof is quite long and draws upon all of the preceding sections. By combining Matui's result for strongly connected finite graphs with our result for infinite graphs, together with another geometric move on graphs, we deduce that the AH~conjecture holds for all graph groupoids satisfying the assumptions in the AH~conjecture.

We end the paper with Section~\ref{sec:examples} where we give a couple of examples and obtain some consequences of the AH~conjecture. In particular, we consider the canonical graph groupoid model of $\mathcal{O}_\infty$ and observe that either the topological full group $\llbracket \mathcal{G}_{E_\infty} \rrbracket$ is simple or $\mathcal{G}_{E_\infty}$ has the strong AH~property, but not both. In fact, these two properties are shown to be mutually exclusive whenever the graph has an infinite emitter. This is in contrast to the case of finite graphs, where one can have both. We also observe that when $E$ has an infinite emitter, then $\llbracket \mathcal{G}_E \rrbracket$ is not finitely generated. A partial description of the abelianization $\llbracket \mathcal{G}_E \rrbracket_{\text{ab}}$ is also given in terms of the first two homology groups.

\section{Étale groupoids}\label{sec:etale}

In this section we will collect the basic notions regarding étale groupoids that we will need, as well as establish notation and conventions. Two standard references for étale groupoids (and their \mbox{$C^*$-algebras}) are Renault's thesis~\cite{Ren80} and Paterson's book~\cite{Pat}. More recent accounts are found in e.g.~\cite{Exel} and~\cite{Sims}.

If two sets $A$ and $B$ are disjoint we will denote their union by $A \sqcup B$ when we wish to emphasize that they are disjoint. When we write $C = A \sqcup B$ we mean that $C = A \cup B$ and that $A$ and $B$ are disjoint sets.

%\subsection{Étale groupoids}
\subsection{Topological groupoids}

A \emph{groupoid} is a set $\GG$ equipped with a partially defined product~$\mathcal{G}^{(2)} \to \GG$ denoted $(g,h) \mapsto gh$, where $\mathcal{G}^{(2)} \subseteq \GG \times \GG$ is the set of \emph{composable pairs}, and an everywhere defined involutive inverse  $g \mapsto g^{-1}$ satisfying the following axioms:
\begin{enumerate}
\item If $(g_1,g_2), (g_2,g_3) \in \mathcal{G}^{(2)}$, then $(g_1 g_2,g_3), (g_1, g_2 g_3) \in \mathcal{G}^{(2)}$ and $ (g_1 g_2) g_3 = g_1 (g_2 g_3)$.
\item For all $g \in \G$, we have $(g, g^{-1}), (g^{-1}, g) \in \mathcal{G}^{(2)}$.
\item If $(g,h) \in \mathcal{G}^{(2)}$, then $g h h^{-1} = g$ and $g^{-1} g h = h$.
\end{enumerate}
The set $\GG^{(0)} \coloneqq \{g g^{-1} \mid g \in \mathcal{G} \}$ is called the \emph{unit space}, and the maps $r,s \colon \mathcal{G} \to \mathcal{G}^{(0)}$ given by $r(g) = gg^{-1}$ and $s(g) = g^{-1}g$ are called the  \emph{range} and \emph{source} maps, respectively.

If~$\mathcal{G}$ is given a topology in which the product and inverse map are continuous we call~$\mathcal{G}$ a topological groupoid. A topological groupoid is \emph{étale} if it has a locally compact topology in which the unit space is open and Hausdorff, and the range and source maps are local homeomorphisms. For the most part we will be dealing with étale groupoids which are (globally) Hausdorff, and then $\mathcal{G}^{(0)}$ is clopen in $\mathcal{G}$. We say that an étale groupoid~$\mathcal{G}$ is \emph{ample} if $\mathcal{G}^{(0)}$ is zero-dimensional, i.e.\ admits a basis of compact open sets. Étale groupoids are characterized by admitting a basis of \emph{bisections} (defined below), and ample groupoids by admitting a basis of \emph{compact bisections}.

% Usually 0-dim means clopen base. For lcpt Hausdorff, then 0-dim iff compact open base iff tot disc. 

For a subset $A \subseteq \GG^{(0)}$ we set $\mathcal{G}^A \coloneqq \{g \in \mathcal{G} \mid r(g) \in A \}$ and $\mathcal{G}_A \coloneqq \{g \in \mathcal{G} \mid s(g) \in A \}$. For singleton sets $A = \{x\}$ we drop the braces and write $\mathcal{G}^x$ and $\mathcal{G}_x$, respectively. The \emph{isotropy group} of $x\in \GG^{(0)}$ is $\GG_x^x \coloneqq \mathcal{G}^x \cap \mathcal{G}_x$, and the \emph{isotropy} of $\mathcal{G}$ is~$\GG' \coloneqq \bigsqcup_{x \in \mathcal{G}^{(0)}} \GG_x^x$. We say that $\mathcal{G}$ is \emph{principal} if $\GG' = \mathcal{G}^{(0)}$, and \emph{effective}\footnote{We remark that the literature is not entirely consistent regarding this notion. For example in~\cite{MatTFG} the term \emph{essentially principal} is used. The term \emph{topologically principal} also appear in the literature, but this usually refers to a slightly stronger notion.} if the interior of~$\GG'$ equals $\mathcal{G}^{(0)}$. The~\emph{\mbox{$\GG$-orbit}} of a unit $x$ is the set $\orb_{\GG}(x) \coloneqq s(\mathcal{G}^x) = r(\mathcal{G}_x)$. We call $\GG$ \emph{minimal} when every~\mbox{$\GG$-orbit} is dense in $\GG^{(0)}$. This is equivalent to there being no nontrivial open (or closed)~\emph{\mbox{$\mathcal{G}$-invariant}} subsets $A \subseteq \mathcal{G}^{(0)}$, meaning that $\mathcal{G}^A = \mathcal{G}_A$. The \emph{restriction} of $\GG$ to~$A$ is $\mathcal{G} \vert_A \coloneqq \mathcal{G}^A \cap \mathcal{G}_A$, and this is a subgroupoid of $\mathcal{G}$ with unit space $A$. If $A$ is open and $\mathcal{G}$ is étale, then $\GG_{|A}$ is an open étale subgroupoid of $\mathcal{G}$. We say that $A$ is \emph{$\mathcal{G}$-full} if~$r(\mathcal{G}_A) = \mathcal{G}^{(0)}$, in other words if $A$ intersects every $\mathcal{G}$-orbit. Two étale groupoids~$\mathcal{G}$ and~$\mathcal{H}$ are \emph{Kakutani equivalent} if there exists a $\mathcal{G}$-full clopen subset~$A \subseteq \GG^{(0)}$ and an $\mathcal{H}$-full clopen subset~$B \subseteq \mathcal{H}^{(0)}$ such that~$\mathcal{G} \vert_A \cong \mathcal{H} \vert_B$ (as topological groupoids). This notion of groupoid equivalence admits many different descriptions, see~\cite[Theorem~3.12]{FKPS}.
% New restriction notation is	\mathcal{G} \vert_A

\subsection{The topological full group}\label{subsec:tfg}

An open subset $U \subseteq \mathcal{G}$ of an étale groupoid $\mathcal{G}$ is called a \emph{bisection} if both $r$ and $s$ are injective on $U$. It follows then that~$r \vert_U \colon U \to r(U)$ is a homeomorphism, and similarly for~$s$. Thus we get a homeomorphism $\pi_U \coloneqq r_{\vert U} \circ (s_{\vert U})^{-1}$ from $s(U)$ to $r(U)$ which maps $s(g)$ to $r(g)$ for each $g \in U$. We say that the bisection~$U$ is \emph{full} if $r(U) = s(U) = \mathcal{G}^{(0)}$, and in this case~ $\pi_U$ is a homeomorphism of $\mathcal{G}^{(0)}$. For a homeomorphism $\alpha \colon X \to X$ of a topological space~$X$ we define the \emph{support} of $\alpha$ to be the set $\supp(\alpha) \coloneqq \overline{ \{x \in X \mid \alpha(x) \neq x\}}$.

The \emph{topological full group} of an effective étale groupoid $\mathcal{G}$ is
\[\llbracket \mathcal{G} \rrbracket \coloneqq \{ \pi_U \mid U \subseteq \mathcal{G} \text{ full bisection} \ \& \ \supp(\pi_U) \text{ is compact}  \}, \]
which is a subgroup of the homeomorphism group of $\mathcal{G}^{(0)}$. The commutator subgroup of $\llbracket \mathcal{G} \rrbracket$ is denoted by~$\DD(\llbracket \mathcal{G} \rrbracket)$. We remark that when $\mathcal{G}$ is effective and Hausdorff, then~$\supp(\pi_U)$ is also open for any full bisection $U$. And if~$V \neq U$ are different bisections, then $\pi_U \neq \pi_V$. As a notational remark, if we are given an element~$\alpha \in \llbracket \mathcal{G} \rrbracket$ we let $U_\alpha$ denote the unique full bisection which gives rise to $\alpha$, i.e. the one with~$\alpha = \pi_{U_\alpha}$.

The following construction will be used several times. Suppose $U \subseteq \mathcal{G}$ is a compact bisection with $r(U) \cap s(U) = \emptyset$. Define
\[\widehat{U} \coloneqq U \sqcup U^{-1} \sqcup \left( \mathcal{G}^{(0)} \setminus (r(U) \cup s(U)) 	   \right).\]
Then~$\widehat{U}$ is a full bisection and its associated homeomorphism $\pi_{\widehat{U}}$ satisfies 
\[\pi_{\widehat{U}}(s(U)) = r(U), \quad \pi_{\widehat{U}}(r(U)) = s(U), \quad \supp(\pi_{\widehat{U}}) = r(U) \cup s(U), \quad \left( \pi_{\widehat{U}}\right)^2 = \id_{\mathcal{G}^{(0)}}. \]
It is clear that $\pi_{\widehat{U}} \in \llbracket \mathcal{G} \rrbracket$. If~$\tau \in \llbracket \mathcal{G} \rrbracket$ satisfies $\tau^2 = 1$ and the set $\{x \in \mathcal{G}^{(0)} \mid \tau(x) = x\}$ is clopen, then one can show that ~$\tau = \pi_{\widehat{U}}$ for some compact bisection $U$ as above. Following~\cite{MatTFG},~\cite{MatProd} we call these elements \emph{transpositions}. We let $\mathcal{S}(\mathcal{G})$ denote the (normal) subgroup of~$\llbracket \mathcal{G} \rrbracket$ generated by all transpositions, as in~\cite{Nek19}.   
%Sketch of claim: Given $\tau$ as above. For each $x \in \supp(\tau)$ can find clopen $x \in A$ with $\tau(A) \cap A = \emptyset$. By compactness we can write $\supp(\tau) = \cup_{i=1}^n A_i$ where $\tau(A_i) \cap A_i = \emptyset$. By disjointizing we may WLOG assume that these are disjoint, so we have $\supp(\tau) = \sqcup_{i=1}^n A_i = \sqcup_{i=1}^n \tau(A_i)$. Now let $\{B_j\}$ be the partition formed by intersecting these two, i.e. their meet, or the coarsest one finer than both. Then for each $A_i$, the set $\tau(A_i)$ intersects $k$ of the other $A_i$'s, giving $B_{i_1}, \ldots, B_{i_k}$. And $\tau(\cup B_{i_m}) = A_i$, so $\tau(B_{i_m}) = B_{j'_m}$ for some index $j'_m$. This means that the partition $\{B_j\}$ has an even number $2N$ of sets and to each $B_j$, there is a unique $B_{j'}$ to which it is mapped by $\tau$. Taking exactly half of these and setting $U = (s_{U_\tau})^{-1}(\cup^N B_j)$ makes $\tau = \pi_{\widehat{U}}$ as desired.

\begin{remark}
Some authors define the topological full group to consist of the full bisections themselves, rather than their associated homeomorphisms, but for effective groupoids this is merely a matter of taste. Topological full groups are quite interesting objects in their own right and we refer to~\cite{MatSurvey} and~\cite{NO} and the references therein for more details on the subject.
\end{remark}

\subsection{Homology for ample groupoids} Let us for an ample Hausdorff groupoid~$\mathcal{G}$ describe its homology with values in $\ZZ$, as popularized by Matui in~\cite{MatHom} building on the general theory of~\cite{CM}. See also~\cite[Section~4]{FKPS} for an excellent account.

For a locally compact Hausdorff space $X$, let $C_c(X, \ZZ)$ denote the compactly supported continuous $\ZZ$-valued functions on $X$. A local homeomorphism $\psi \colon X \to Y$ between such spaces induces a homomorphism $\psi_* \colon C_c(X, \ZZ) \to C_c(Y, \ZZ)$ which is given by~${\psi_*(f)(y) = \sum_{x \in \psi^{-1}(y)} f(x)}$ for $f \in C_c(X, \ZZ)$. Only finitely many terms are nonzero in this sum. 

For $n \geq 1$, let $\mathcal{G}^{(n)}$ denote the space of composable strings of~$n$ elements from $\mathcal{G}$, equipped with the relative topology induced by the product topology on $n$ copies of $\mathcal{G}$. In particular, $\mathcal{G}^{(2)}$ is the composable pairs,  $\mathcal{G}^{(1)} = \mathcal{G}$ and for $n = 0$, we have the unit space~$\mathcal{G}^{(0)}$. Define local homeomorphisms $d_i \colon \mathcal{G}^{(n)} \to \mathcal{G}^{(n-1)}$ for $n \geq 2$ and $i = 0, \ldots, n$ by 
\[d_i(g_1, g_2, \ldots, g_n) = \begin{cases} (g_2, g_3, \ldots, g_n) &\text{if } i = 0, \\
(g_1, \ldots, g_{i-1}, g_i g_{i+1}, g_{i+2}, \ldots, g_n) &\text{if } 1 \leq i \leq n-1, \\
(g_1, g_2, \ldots, g_{n-1}) &\text{if } i = n.
\end{cases}  \]
From these we in turn define homomorphisms $\delta_n \colon C_c(\mathcal{G}^{(n)}, \ZZ) \to C_c(\mathcal{G}^{(n-1)}, \ZZ)$ by setting~$\delta_n = \sum_{i=0}^n (-1)^i (d_i)_*$, and for $n=1$ set $\delta_1 = s_* - r_*$. Then 
\begin{equation}\label{eq:chaincomplex}
\begin{tikzcd}
0  &  C_c(\mathcal{G}^{(0)}, \ZZ) \arrow{l} & C_c(\mathcal{G}^{(1)}, \ZZ) \arrow{l}[swap]{\delta_1} & C_c(\mathcal{G}^{(2)}, \ZZ) \arrow{l}[swap]{\delta_2} & \cdots \arrow{l}[swap]{\delta_3}
\end{tikzcd}
\end{equation}
becomes a chain complex and the homology groups $H_n(\mathcal{G})$ is defined as the homology of this complex, i.e.\ $H_n(\mathcal{G}) = \ker \delta_n / \im \delta_{n+1}$. We will use $C_\bullet(\G, \ZZ)$ to denote the chain complex~\eqref{eq:chaincomplex}.

%Next we list some useful facts about groupoid homology which we will need.

Since the zeroth and first homology groups will appear frequently in this text, by virtue of being ingredients in the AH~conjecture, we describe the two homomorphisms $\delta_1$ and $\delta_2$ that define them in more detail. The former is the difference of the maps from $C_c(\mathcal{G}, \ZZ)$ to $C_c(\mathcal{G}^{(0)}, \ZZ)$ induced by the source and range maps, and these are in turn given by 
\[s_*(f)(x) = \sum_{g \in \mathcal{G}_x} f(g) \quad  \text{and} \quad r_*(f)(x) = \sum_{g \in \mathcal{G}^x} f(g) \]
for $f \in C_c(\mathcal{G}, \ZZ)$ and $x \in \mathcal{G}^{(0)}$. As for the latter we have that $\delta_2 = (d_0)_* - (d_1)_* + (d_2)_*$, where each of these summands are maps from $C_c(\mathcal{G}^{(2)}, \ZZ)$ to $C_c(\mathcal{G}, \ZZ)$ given by
\begin{align*}
(d_0)_*(\psi)(g) &= \sum_{h \in \mathcal{G}, \ s(h) = r(g) } \psi(h, g) \\
(d_1)_*(\psi)(g) &= \sum_{(h_1, h_2) \in \mathcal{G}^{(2)}, \ h_1 h_2 = g   } \psi(h_1, h_2) \\
(d_2)_*(\psi)(g) &= \sum_{h \in \mathcal{G}, \ r(h) = s(g) } \psi(g, h)
\end{align*}
for $\psi \in C_c(\mathcal{G}^{(2)}, \ZZ)$ and $g \in \mathcal{G}$. 

Observe that $H_0$ is spanned (over $\ZZ$) by equivalence classes of indicator functions of compact open subsets of the unit space. For any compact bisection~$U \subseteq \mathcal{G}$ we have~$\left[1_{s(U)}\right] = \left[1_{r(U)}\right]$ in $H_0(\mathcal{G})$, since $\delta_1(1_U) = 1_{s(U)} - 1_{r(U)}$. If we view a compact open set~$A \subseteq \mathcal{G}^{(0)}$ as a subset of $\mathcal{G}$, then $1_A \in \ker \delta_1$ and $[1_A] = 0$ in $H_1(\mathcal{G})$ since $\delta_2(1_{\Delta A}) = 1_A$, where $\Delta A \subseteq \mathcal{G}^{(2)}$ denotes the diagonal in~$A \times A$. 

Any étale homomorphism\footnote{That is, a local homeomorphism which respects the groupoid structures.} $\rho \colon \mathcal{G} \to \mathcal{H}$ induce local homeomorphisms \mbox{$\rho^{(n)} \colon \mathcal{G}^{(n)} \to \mathcal{H}^{(n)}$} for $n \geq 0$ by applying $\rho$ in each coordinate. The induced maps $(\rho^{(n)})_*$ from $C_c(\mathcal{G}^{(n)}, \ZZ)$ to~$C_c(\mathcal{H}^{(n)}, \ZZ)$ form a chain map $\rho_\bullet \colon C_\bullet(\G, \ZZ) \to C_\bullet(\mathcal{H}, \ZZ)$ which in turn induce homomorphisms $H_n(\rho_\bullet) \colon H_n(\mathcal{G}) \to H_n(\mathcal{H})$. This assignment is functorial. In particular, if $\mathcal{G} \subseteq \mathcal{H}$ is an open subgroupoid, then the inclusion map $\iota \colon \mathcal{G} \to \mathcal{H}$ induce homomorphisms $H_n(\iota_\bullet) \colon H_n(\mathcal{G}) \to H_n(\mathcal{H})$ given by $[1_W] \mapsto [1_W]$ for any compact open set~${W \subseteq \mathcal{G}^{(n)}}$. And if $Y \subseteq \mathcal{G}^{(0)}$ is a $\mathcal{G}$-full clopen, then the inclusion map $\iota$ induce isomorphisms $H_n(\iota_\bullet) \colon H_n(\mathcal{G} \vert_Y) \xrightarrow{\ \cong \ } H_n(\mathcal{G})$ for all $n \geq 0$~\cite[Lemma~4.3]{FKPS}. From this it is clear that Kakutani equivalent groupoids have the same homology. 

When $n=0$ in the setting above the inverse map $H_0(\iota_\bullet)^{-1} \colon H_0(\mathcal{G}) \to H_0(\mathcal{G} \vert_Y)$ can be described as follows. Let $A \subseteq \mathcal{G}^{(0)}$ be a compact open set. By fullness of~$Y$, we can for each $x \in A$ find a compact bisection $U_x \subseteq \mathcal{G}$ with $x \in s(U_x) \subseteq A$ and ${r(U_x) \subseteq Y}$. By compactness and $0$-dimensionality we can find finitely many compact bisections~$U_1, \ldots, U_m$ so that the $s(U_i)$'s form a clopen partition of $A$ and so that~$r(U_i) \subseteq Y$. Now $[1_A] = \sum_{i=1}^m [1_{s(U_i)}] = \sum_{i=1}^m [1_{r(U_i)}]$ in $H_0(\mathcal{G})$, and we thus have
\begin{equation}\label{eq:0inverse}
H_0(\iota_\bullet)^{-1}([1_A]) = \sum_{i=1}^m [1_{r(U_i)}] \in H_0(\mathcal{G} \vert_Y).
\end{equation}

\subsection{AF-groupoids and their homology}
Let $\mathcal{R}_n$ denote the full equivalence relation on the finite set $\{1,2, \ldots, n\}$, viewed as a discrete groupoid. When $X$ is a locally compact Hausdorff space, Renault~\cite{Ren80} calls the product groupoid $X \times \mathcal{R}_n$ an \emph{elementary groupoid of type~$n$}, where we view $X$ as a trivial groupoid $X = X^{(0)}$. We will call an étale groupoid~$\mathcal{G}$ \emph{elementary} if it is Hausdorff, principal and~$\mathcal{G} \setminus \mathcal{G}^{(0)}$ is compact. Lemma~3.4 in~\cite{GPS04} shows that an ample elementary groupoid is isomorphic to a finite disjoint union of elementary groupoids of type $n_i$. An \emph{AF-groupoid} is an ample groupoid which can be written as an increasing union of open elementary subgroupoids.

% Pretty sure that our definition of elementary is covered by [FKPS]'s. But I'm not so sure that they are equivalent?? Do [FKPS] get ``more'' AF-groupoids than we do??

%For $n \in \NN$ we let $\mathcal{R}_n$ denote the full equivalence relation on $n$ symbols, considered as a discrete étale equivalence relation groupoid. This is the canonical groupoid model for the $n \times n$ matrices as a $C^*$-algebra.

% Renault defines AE (approx elementary) and AF (here unit space must be 0-dim)

It is a well known fact that when $\mathcal{G}$ is an AF-groupoid, its homology is given by 
\[H_n(\mathcal{G}) \cong \begin{cases}
K_0(C^*_r(\mathcal{G})) \quad &n=0, \\
0 \quad &n \geq 1,
\end{cases} \]
where $C^*_r(\mathcal{G})$ denotes the reduced groupoid \mbox{$C^*$-algebra} of $\mathcal{G}$, which in this case is an AF-algebra. The $H_0$-group (and the $K_0$-group) coincides with the dimension group of any defining Bratteli diagram (as an ordered abelian group with distuingished order unit). Stated like this it first appeared in~\cite{MatHom} (for compact unit spaces), but it can be traced back to the earlier works~~\cite{Ren80} and~\cite{Kri}. The case of a non-compact unit space is treated in~\cite{FKPS}. 

\begin{theorem}[{\cite[Corollary~5.2]{FKPS}}] \label{thm:KH}
Let $\mathcal{G}$ be an AF-groupoid. Then the map $[1_A]_{H_0} \mapsto [1_A]_{K_0}$ for~$A \subseteq \mathcal{G}^{(0)}$ compact open induces an isomorphism $H_0(\mathcal{G}) \cong K_0(C_r^*(\mathcal{G}))$.
\end{theorem}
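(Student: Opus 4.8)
The plan is to exploit that both functors $H_0(-)$ and $K_0(C^*_r(-))$ are \emph{continuous} with respect to increasing unions of open subgroupoids, and to reduce the statement to the elementary building blocks, where it can be checked by hand. Write $\mathcal{G} = \bigcup_m \mathcal{G}_m$ as an increasing union of open elementary subgroupoids. By the structure result recalled above (Lemma~3.4 in~\cite{GPS04}), each $\mathcal{G}_m$ is a finite disjoint union of elementary groupoids of type $n_i$, i.e.\ of the form $X \times \mathcal{R}_n$ with $X$ a locally compact Hausdorff zero-dimensional space.

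\textbf{The building block.} First I would treat a single elementary groupoid $\mathcal{G} = X \times \mathcal{R}_n$. On the homology side, writing the unit space as $X \times \{1, \dots, n\}$, the classes $[1_{A \times \{i\}}]$ generate $H_0$, and the bisections $A \times \{(i,j)\}$ force $[1_{A \times \{i\}}] = [1_{A \times \{j\}}]$; since $\delta_1(1_{A \times \{(i,j)\}}) = 1_{A \times \{j\}} - 1_{A \times \{i\}}$ these are the only relations, giving $H_0(X \times \mathcal{R}_n) \cong C_c(X, \ZZ)$ via $[1_{A \times \{i\}}] \mapsto 1_A$. On the $C^*$-side, $C^*_r(X \times \mathcal{R}_n) \cong M_n(C_0(X))$, so $K_0(C^*_r(X \times \mathcal{R}_n)) \cong K_0(C_0(X)) \cong C_c(X, \ZZ)$ (using zero-dimensionality of $X$), again realised by $[1_A] \mapsto 1_A$. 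Thus both sides are canonically $C_c(X, \ZZ)$ and the prescribed map $[1_A]_{H_0} \mapsto [1_A]_{K_0}$ is the identity on $C_c(X,\ZZ)$; by additivity of both functors over finite disjoint unions, the isomorphism holds for every ample elementary groupoid $\mathcal{G}_m$.

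\textbf{Continuity and passage to the limit.} Next I would establish continuity along the filtration. Since $\mathcal{G}^{(k)} = \bigcup_m \mathcal{G}_m^{(k)}$ is an increasing union of open sets, every compactly supported function is supported in some $\mathcal{G}_m^{(k)}$, so $C_c(\mathcal{G}^{(k)}, \ZZ) = \varinjlim_m C_c(\mathcal{G}_m^{(k)}, \ZZ)$; as homology commutes with filtered colimits of chain complexes this yields $H_0(\mathcal{G}) \cong \varinjlim_m H_0(\mathcal{G}_m)$, the connecting maps being induced by the inclusions $\mathcal{G}_m \hookrightarrow \mathcal{G}_{m+1}$ and hence sending $[1_A] \mapsto [1_A]$. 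Dually, $C_c(\mathcal{G}, \ZZ)$ is dense in $C^*_r(\mathcal{G})$ and compatible with the reduced norms, so $C^*_r(\mathcal{G}) = \varinjlim_m C^*_r(\mathcal{G}_m)$, and continuity of $K_0$ gives $K_0(C^*_r(\mathcal{G})) \cong \varinjlim_m K_0(C^*_r(\mathcal{G}_m))$, again with connecting maps $[1_A] \mapsto [1_A]$. Finally I would assemble the two inductive systems: the elementary-level isomorphisms $H_0(\mathcal{G}_m) \cong K_0(C^*_r(\mathcal{G}_m))$ intertwine the two families of connecting maps, because on the generators $[1_A]$ (with $A \subseteq \mathcal{G}^{(0)}$ compact open, common to all stages) every map in sight is $[1_A] \mapsto [1_A]$. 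The resulting ladder commutes and passes to a single isomorphism $H_0(\mathcal{G}) \cong K_0(C^*_r(\mathcal{G}))$ of the colimits, given precisely by $[1_A]_{H_0} \mapsto [1_A]_{K_0}$.

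\textbf{Main obstacle.} The routine part is the elementary computation; the delicate part is the two continuity statements in the non-compact setting, in particular verifying that the reduced groupoid $C^*$-algebra of the union is the inductive limit of the $C^*_r(\mathcal{G}_m)$ (so that $K_0$-continuity applies) and that the identification $K_0(C_0(X)) \cong C_c(X, \ZZ)$ is natural in $X$. I expect these to be where the real care is needed, since for a compact unit space they are standard, whereas the non-compact case is exactly what separates Corollary~5.2 of~\cite{FKPS} from the earlier compact formulation in~\cite{MatHom}.
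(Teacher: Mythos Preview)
The paper does not give its own proof of this statement: it is quoted verbatim as \cite[Corollary~5.2]{FKPS}, with the preceding paragraph remarking only that the compact case goes back to \cite{MatHom} (and earlier to \cite{Ren80}, \cite{Kri}) while the non-compact case is handled in \cite{FKPS}. So there is no in-paper proof to compare your proposal against.

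Your proposal is the natural strategy and is essentially correct. Two small points worth tightening. First, in the elementary computation you assert that the relations $\delta_1(1_{A\times\{(i,j)\}}) = 1_{A\times\{j\}} - 1_{A\times\{i\}}$ are ``the only relations''; this is true, but a line is needed to say why $\im(\delta_1)$ is generated by these (every $f \in C_c(X\times\mathcal{R}_n,\ZZ)$ is a $\ZZ$-linear combination of indicators $1_{A\times\{(i,j)\}}$, and applying $\delta_1$ to each gives exactly such a difference). Second, the structure lemma you invoke (\cite[Lemma~3.4]{GPS04}, as used later in this paper for Theorem~\ref{thm:cancellation}) decomposes an ample elementary groupoid, in the non-compact case, as a finite disjoint union of pieces $X_{i}\times\mathcal{R}_{n_i}$ with each $X_i$ \emph{compact}, together with a residual trivial groupoid $Y$ (a non-compact zero-dimensional space with only units). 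Your elementary computation still covers this, since $Y \cong Y\times\mathcal{R}_1$ and you allowed $X$ non-compact; but it is worth being explicit about that residual piece. The continuity steps and the ladder argument are standard and fine as stated.
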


\subsection{Cocycles and skew products}

When $\mathcal{G}$ is an étale groupoid and $\Gamma$ is a discrete group, we call $c \colon \mathcal{G} \to \Gamma$ a \emph{cocycle} if it is a continuous groupoid homomorphism. We shall be dealing exclusively with $\ZZ$-valued cocycles, as these are the ones that appear naturally for graph groupoids.

\begin{definition}
Let $\mathcal{G}$ be an étale groupoid with a cocycle $c \colon \mathcal{G} \to \ZZ$. The \emph{skew product groupoid} of $\mathcal{G}$ by $c$ is the groupoid $\G\times_c \ZZ \coloneqq \G\times \ZZ$ with operations 
\[(g,k)(g',m+c(g)) \coloneqq (gg',m) \quad \text{and} \quad (g,m)^{-1} \coloneqq (g^{-1}, m + c(g)), \]
so that $s(g,m)=(s(g),c(g)+m)$ and $r(g,m)=(r(g),m)$.
\end{definition} 

The skew product groupoid becomes an étale groupoid in the product topology. The unit space of $\G\times_c \ZZ$ can be identified with $\mathcal{G}^{(0)} \times \ZZ$. And for each bisection $U \subseteq \mathcal{G}$ and~$m \in \ZZ$, the set $U \times \{m\}$ is a bisection in $\G\times_c \ZZ$. We record the following elementary lemma about the kernel of the cocycle sitting inside the skew product. 

\begin{lemma}\label{lem:kerc}
Let $\mathcal{G}$ be an étale groupoid with a cocycle $c \colon \mathcal{G} \to \ZZ$. Then $\ker(c)$ is a clopen subgroupoid of $\mathcal{G}$, and we have  $\left( \G\times_c \ZZ \right) \vert_{\mathcal{G}^{(0)} \times \{0\}} \cong \ker(c)$ via the map $(g,0) \mapsto g$.
\end{lemma}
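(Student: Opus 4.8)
The plan is to handle the two assertions separately, beginning with the claim that $\ker(c)$ is a clopen subgroupoid. First I would note that, since $c$ is a continuous homomorphism into the discrete group $\ZZ$, the singleton $\{0\}$ is clopen, so $\ker(c) = c^{-1}(\{0\})$ is a clopen subset of $\mathcal{G}$. To see it is a subgroupoid I would invoke that $c$ is a groupoid homomorphism: for any unit $x$ we have $c(x) = c(x) + c(x)$, forcing $c(x) = 0$, so $\mathcal{G}^{(0)} \subseteq \ker(c)$; and if $g, h \in \ker(c)$ are composable then $c(gh) = c(g) + c(h) = 0$ and $c(g^{-1}) = -c(g) = 0$, so $\ker(c)$ is closed under the partial product and under inversion.

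For the isomorphism, the key step is to identify the restriction explicitly. Recall that in $\G \times_c \ZZ$ the unit space is $\mathcal{G}^{(0)} \times \ZZ$ and the structure maps are $r(g,m) = (r(g), m)$ and $s(g,m) = (s(g), c(g) + m)$. Writing $A \coloneqq \mathcal{G}^{(0)} \times \{0\}$ and using $\mathcal{G} \vert_A = \mathcal{G}^A \cap \mathcal{G}_A$, an element $(g,m)$ lies in $(\G \times_c \ZZ) \vert_A$ precisely when $r(g,m) \in A$ and $s(g,m) \in A$; the first condition gives $m = 0$ and the second gives $c(g) + m = 0$, i.e.\ $c(g) = 0$. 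Hence the restriction is exactly $\{(g,0) \mid g \in \ker(c)\}$, and $(g,0) \mapsto g$ is a bijection onto $\ker(c)$.

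It then remains to check that this bijection is a groupoid isomorphism and a homeomorphism. For the algebraic part I would substitute the constraint $c(g) = 0$ into the defining formulas of the skew product: two elements $(g,0), (g',0)$ of the restriction are composable iff $s(g) = r(g')$, and in that case the product formula $(g,k)(g', m + c(g)) = (gg', m)$ with $k = 0$ and (forced) $m = 0$ yields $(g,0)(g',0) = (gg',0)$, which maps to $gg'$; similarly $(g,0)^{-1} = (g^{-1}, c(g)) = (g^{-1}, 0)$ maps to $g^{-1}$, so the map respects products and inverses. Finally, $(g,0) \mapsto g$ is the restriction of the continuous coordinate projection $\G \times \ZZ \to \G$, while its inverse $g \mapsto (g,0)$ is the restriction of the continuous inclusion $\G \hookrightarrow \G \times \ZZ$, so the map is a homeomorphism onto $\ker(c)$ equipped with its subspace topology.

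I do not anticipate a genuine obstacle here, as the statement is essentially a bookkeeping exercise. The only point requiring a little care is reading off the source map of the skew product correctly, so that the two restriction conditions pin down both $m = 0$ and $c(g) = 0$; once that is in place, everything else follows by substituting $c(g) = 0$ into the skew-product operations.
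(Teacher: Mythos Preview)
Your proof is correct and complete. The paper does not actually give a proof of this lemma---it is recorded as an ``elementary lemma'' and left to the reader---so there is nothing to compare against; your direct verification of the clopen subgroupoid property and the explicit identification of the restriction via the source and range formulas of the skew product is exactly the routine check the authors had in mind.
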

%\begin{proof}
%As $\{ 0 \}$ is clopen in $\mathbb{Z}$, so is  $\ker(c) = c^{-1}(\{0\})$. If $(g,m) \in \left( \G\times_c \ZZ \right)_{\vert \mathcal{G}^{(0)} \times \{0\}}$, then this forces $m=0$, which in turn forces $c(g) = 0$. Defining $(g,0) \mapsto g$ clearly gives the desired isomorphism.
%\end{proof}

\begin{remark}
We emphasize that even though $\ker(c)$ is a clopen subgroupoid of $\mathcal{G}$, and embeds as a clopen subgroupoid of the skew product $\G\times_c \ZZ$, we can generally not embed~$\mathcal{G}$ itself into $\G\times_c \ZZ$ in any way (e.g.~$\G\times_c \ZZ$ can be principal while~$\mathcal{G}$ is not.)
\end{remark}

There is a canonical action $\widehat{c}$ by $\ZZ$ on $\G\times_c \ZZ$ defined by~${\widehat{c}_k \cdot (g,m) = (g, m+k)}$, i.e.\ shifting the integer coordinate. If one then forms the semi-direct product groupoid ${(\G\times_c \ZZ) \rtimes_{\widehat{c}} \ZZ}$, one gets that this semi-direct product is Kakutani equivalent to the groupoid $\G$ that we started with, and hence they have the same homology groups~\cite{MatHom}. This is what Matui uses when he computes the homology groups of $\mathcal{G}_E$ for a finite graph~$E$ by means of a spectral sequence~\cite{MatTFG}. We shall instead use a long exact sequence in homology from~\cite{Ort}, to be described in Section~\ref{sec:LES}.

\section{Graphs and their groupoids}\label{sec:graphs}

As this paper primarily concerns graph groupoids, we spend some time in this section recalling their definition and properties, as well as establishing notation. We refer to~\cite{BCW} and~\cite{NO} for additional details.

\subsection{Graphs} A \emph{(directed) graph} ${E=(E^0,E^1,r,s)}$ consists of two countable sets $E^0$ and $E^1$, whose elements are called vertices and edges, respectively, in addition to range and source maps $r,s \colon E^1 \to E^0$. We say that $E$ is \emph{finite} if both $E^0$ and $E^1$ are finite sets.

A \emph{path} is a sequence of edges $\mu=e_1 e_2 \ldots e_n$ such that $r(e_i)=s(e_{i+1})$ for~${1\leq i\leq n-1}$. The \emph{length} of $\mu$ is $\left| \mu \right| \coloneqq n$. The set of paths of length $n$ is denoted~$E^n$ and the set of all finite paths is~${E^* \coloneqq \bigcup_{n=0}^\infty E^n}$. The range and source maps extend to $E^*$ by setting~${r(\mu) = r(e_n)}$ and~$s(\mu) = s(e_1)$. For $v \in E^0$, we set~$s(v) = r(v) = v$. If $\mu, \nu \in E^*$ satisfy $r(\mu) = s(\nu)$, then $\mu \nu \in E^*$ denotes their concatenation. We say that $\mu$ is a \emph{subpath} of $\nu$ if $\nu = \mu \lambda$ for some path $\lambda$ with $s(\lambda) = r(\mu)$. Two paths are called \emph{disjoint} if neither is a subpath of the other. A graph $E$ is called \emph{strongly connected} if for each pair of vertices $v,w \in E^0$ there is a path from $v$ to $w$. By a \emph{strongly connected component} we mean a maximal subset of vertices such that there is a path between any two vertices in this subset. The strongly connected components form a partition of $E^0$.

An edge $e \in E^1$ with $r(e) = s(e)$ is called a \emph{loop}. More generally, a \emph{cycle} is a nontrivial path $\mu$ (i.e.\ $\left| \mu \right| \geq 1$) with $r(\mu) = s(\mu)$, and we say that $\mu$ is \emph{based} at $s(\mu)$ or that $s(\mu)$ \emph{supports} the cycle $\mu$. By $\mu^k$ we mean $\mu$ concatenated $k$ times. A graph is called \emph{acyclic} if it has no cycles. An \emph{exit} for a path $\mu = e_1 \ldots e_n$ is an edge~$e \in E^1$ such that~$s(e) = s(e_i)$ and $e \neq e_i$ for some $1 \leq i \leq n$. The graph $E$ is said to satisfy \emph{Condition~(L)} if every cycle in $E$ has an exit. 

For a vertex $v\in E^0$ and $n \geq 1$ we define the sets $vE^n \coloneqq \{ \mu \in E^n \mid s(\mu) = v\}$ and~${E^n v \coloneqq \{ \mu \in E^n \mid r(\mu) = v\}}$. We call $v$ a \emph{sink} if $vE^1 = \emptyset$ and a \emph{source} if ${E^1 v = \emptyset}$. Furthermore, $v$ is called an \emph{infinite emitter} if $vE^1$ is an infinite set. Sinks and infinite emitters are collectively referred to as \emph{singular} vertices and the set of these is denoted~$E^0_{\text{sing}}$. Non-singular vertices are called \emph{regular}. A graph is \emph{row-finite} if it has no infinite emitters, and \emph{essential} if it has no sinks nor sources.

\subsection{The boundary path space} An \emph{infinite path} in a graph $E$ is a sequence of edges $x = e_1 e_2 e_3 \ldots$ such that $r(e_i)=s(e_{i+1})$ for all $i \in \mathbb{N}$. We define~${s(x) \coloneqq s(e_1)}$ and~${\left| x \right| \coloneqq \infty}$. The set of all infinite paths is denoted $E^\infty$. We call $E$ \emph{cofinal} if for every vertex $v \in E^0$ and for every infinite path $e_1 e_2 \ldots \in E^\infty$, there is a path from $v$ to $s(e_n)$ for some ~$n \in \mathbb{N}$. The \emph{boundary path space} of $E$ is
\[\partial E \coloneqq E^\infty \cup \{\mu\in E^* \mid r(\mu)\in E^0_{\text{sing}}\}.\]
The \emph{cylinder set} of a finite path $\mu \in E^*$ is~$Z(\mu) \coloneqq \{ \mu x \mid x \in \partial E, \ s(x) = r(\mu) \}$. Given a finite subset $F \subseteq r(\mu)E^1$, we  define the associated \emph{punctured cylinder set} to be~$Z(\mu \setminus F) \coloneqq Z(\mu) \setminus \left( \bigsqcup_{e\in F} Z(\mu e) \right)$. Note that two finite paths are disjoint if and only if their cylinder sets are disjoint sets.

The topology on the boundary path space $\partial E$ is specified by the countable basis~${\left\{Z(\mu \setminus F)  \mid \mu \in E^*, F \subseteq_{\text{finite}} r(\mu)E^1 \right\}}$. This turns $\partial E$ into a locally compact Hausdorff space in which each basic set~$Z(\mu \setminus F)$ is compact open~\cite{Web}. Note that the boundary path space $\partial E$ itself is compact if and only if $E^0$ is finite. Existence of isolated points in $\partial E$ is characterized in~\cite[Section~3]{CW}.  

Define $\partial E^{\geq n} \coloneqq \{ x \in \partial E  \mid \left| x \right| \geq n \}$ for $n \in \mathbb{N}$, which are open subsets of $\partial E$. The \emph{shift map} on $E$ is the map $\sigma_E \colon \partial E^{\geq 1} \to \partial E$ given by $\sigma_E(e_1 e_2 e_3 \ldots) = e_2 e_3 e_4 \ldots$ for~$e_1 e_2 e_3 \ldots \in \partial E^{\geq 2}$ and $\sigma_E(e) = r(e)$ for $e \in \partial E \cap E^1$. The image $\sigma_E \left( \partial E^{\geq 1} \right)$ is also open in $\partial E$ and the shift map is surjective precisely when $E$ has no sources. We also set~$\sigma_E^0 = \id_{\partial E}$. Then the iterates $\sigma_E^n \colon \partial E^{\geq n}\to\partial E$ are local homeomorphisms for each~$n \geq 0$.

\subsection{Graph groupoids} The \emph{graph groupoid} of a graph $E$ is
\[\mathcal{G}_E \coloneqq \{(x,m-n,y) \mid m,n \geq 0, \ x \in \partial E^{\geq m}, \ y \in \partial E^{\geq n}, \ \sigma_E^m(x) = \sigma_E^n(y) \}, \]
equipped with the product $(x,k,y) \cdot (y,l,z) \coloneqq (x,k+l,z)$ (and undefined otherwise), and inverse $(x,k,y)^{-1} \coloneqq (y,-k,x)$. In other words, a triplet $(x,k,y) \in \partial E \times \ZZ \times \partial E$ belongs to the graph groupoid $\mathcal{G}_E$ if and only if $x = \mu z$ and $y = \nu z$ for some finite paths~$\mu, \nu \in E^*$ and a boundary path $z \in \partial E$ satisfying $\vert \mu \vert = \vert \nu \vert + k$.

Given two finite paths $\mu, \nu \in E^*$ with $r(\mu) = r(\nu)$ and a finite subset $F \subseteq r(\mu)  E^1$ we define the associated \emph{punctured double cylinder set} to be the following subset of $\mathcal{G}_E$:
\[Z(\mu, F, \nu) \coloneqq \{(x, \vert \mu \vert - \vert \nu \vert,y) \mid x\in Z(\mu \setminus F), \ y\in Z(\nu \setminus F), \ \sigma_E^{\vert \mu \vert  }(x) = \sigma_E^{\vert \nu \vert  }(y) \}.  \]
Equipping the graph groupoid $\mathcal{G}_E$ with the topology generated by the countable basis
\[\left\{ Z(\mu, F, \nu) \mid \mu, \nu \in E^*, r(\mu) = r(\nu), F \subseteq_{\text{finite}} r(\mu)E^1 \right\}\] turns it into an ample Hausdorff groupoid, as each $Z(\mu, F, \nu)$ becomes a compact open bisection. That this indeed is the standard topology on $\mathcal{G}_E$, as in e.g.~\cite{BCW}, was shown in~\cite[Lemma~9.2]{NO}.

The unit space of $\mathcal{G}_E$ is $\mathcal{G}^{(0)}_E = \{ (x,0,x) \mid x \in \partial E \}$, which we will freely identify with the boundary path space $\partial E$ via the homeomorphism $(x,0,x) \leftrightarrow x$. In terms of the bases we identify $Z(\mu ,F, \mu)$ with $Z(\mu \setminus F)$. The range and source maps of $\mathcal{G}_E$ then become $r(x,k,y) = x$ and $s(x,k,y) = y$. For a basic compact open bisection as above we have $r(Z(\mu, F, \nu)) = Z(\mu \setminus F)$ and $s(Z(\mu, F, \nu)) = Z(\nu \setminus F)$.

A graph groupoid $\mathcal{G}_E$ is effective precisely when $E$ satisfies Condition~(L)~\cite[Proposition~2.3]{BCW}, and $\mathcal{G}_E$ is minimal if and only if $E$ is both cofinal and there exists a path from every vertex to every singular vertex~\cite[Proposition~8.3]{NO}. On any graph groupoid there is a canonical cocycle $c_E \colon \G_E \to \ZZ$ given by $(x,k,y)\mapsto k$. We define \[\mathcal{H}_E \coloneqq \ker(c_E) = \{(x,0,y) \in \mathcal{G}_E \},\] which is a clopen subgroupoid of~$\mathcal{G}_E$. The subgroupoid $\mathcal{H}_E$ and the skew product groupoid $\G_E \times_{c_E} \ZZ$ will play important roles in the proof of the AH~conjecture for~$\mathcal{G}_E$.

The full and the reduced groupoid \mbox{$C^*$-algebra} of a graph groupoid coincide. There is a canonical isomorphism $C^*_r(\mathcal{G}_E) \cong C^*(E)$ which is given by mapping the indicator function $1_{Z(v,v)} \in C_c(\mathcal{G}_E, \mathbb{C})$ to the projection $p_v \in C^*(E)$ for each $v \in E^0$ and mapping~$1_{Z(e, r(e))} \in C_c(\mathcal{G}_E, \mathbb{C})$ to the partial isometry $s_e \in C^*(E)$ for each $e \in E^1$~\cite[Proposition~2.2]{BCW}. For an introduction to graph \mbox{$C^*$-algebras}, see~\cite{Rae}.

\subsection{The skew graph} 
Let $E$ be a graph. The \emph{skew graph} of $E$, denoted $E\times \ZZ$, is the graph with vertices $(E\times \ZZ)^0 = E^0\times \ZZ$ and edges $(E\times \ZZ)^1 = E^1\times\ZZ$, such that~$s(e,i)=(s(e),i)$ and $r(e,i)=(r(e),i-1)$. See Figure~\ref{fig:skew} for an example.
\begin{figure}[h]
  \[\begin{tikzpicture}[vertex/.style={circle, draw = black, fill = black, inner sep=0pt,minimum size=5pt}, implies/.style={double,double equal sign distance,-implies}]

\node at (-1.5,1) {$E$};
\node[vertex] (v) at (0,2) [label=above left:$v$] {};
\node[vertex] (w) at (0,0) [label=below left:$w$] {};

\node at (3.5,1) {$E \times \ZZ$};
\node at (5,1) {$\cdots$};
\node[vertex] (v-1) at (6,2) [label=above:{$(v, -1)$}] {}; % Cant use ``comma'' inside, so use braces outside the dollar signs
\node[vertex] (w-1) at (6,0) [label=below:{$(w,-1)$}] {};
\node[vertex] (v0) at (8,2) [label=above:{$(v, 0)$}] {};
\node[vertex] (w0) at (8,0) [label=below:{$(w,0)$}] {};
\node[vertex] (v1) at (10,2) [label=above:{$(v, 1)$}] {};
\node[vertex] (w1) at (10,0) [label=below:{$(w,1)$}] {};
\node at (11,1) {$\cdots$};

\path (v) edge[thick, bend right = 50, decoration={markings, mark=at position 0.99 with {\arrow{triangle 45}}}, postaction={decorate} ] (w)

(w) edge[thick, implies]  (v)
      edge[thick, loop, min distance = 20mm, looseness = 10, out = 45, in = 315, implies]  (w)

(w1) edge[thick, implies]  (w0)	edge[thick, implies]  (v0)

(w0) edge[thick, implies]  (w-1)	edge[thick, implies]  (v-1)  

(v1) edge[thick, decoration={markings, mark=at position 0.99 with {\arrow{triangle 45}}}, postaction={decorate} ] (w0)

(v0) edge[thick, decoration={markings, mark=at position 0.99 with {\arrow{triangle 45}}}, postaction={decorate} ] (w-1)           ; % dont forget semicolon guy
\end{tikzpicture}\]
      \caption{An example of a graph and its skew graph. A double arrow indicates that there are infinitely many edges.}
      \label{fig:skew}
\end{figure}
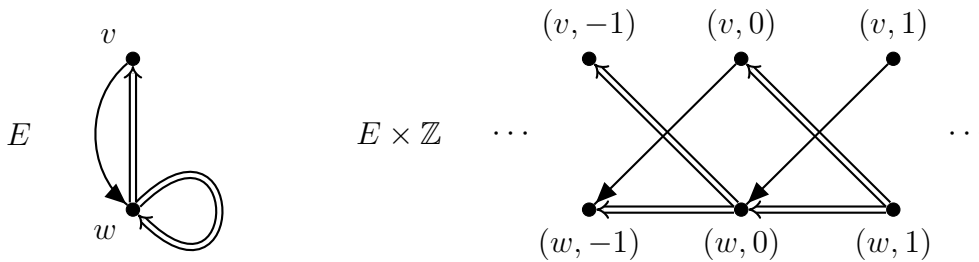

The skew graph $E\times \ZZ$ played a part in the computation of $K$-theory for graph \mbox{$C^*$-algebras}~\cite{RS}. A useful fact is that the skew graph  is always acyclic, and therefore its graph \mbox{$C^*$-algebra}, $C^*(E\times \ZZ)$, is an AF-algebra~\cite[Corollary~2.13]{DT05}. Thus its~$K_1$ group vanishes, which in turn allows the $K$-theory of $C^*(E)$ to be computed from a suitable six-term exact sequence which relates the $K$-theory of the skew graph \mbox{$C^*$-algebra} with that of the original graph \mbox{$C^*$-algebra}. 
% K-theory of graph algebras. Tomforde says: row-finite and no sinks was first done by Raeburn-Szymanski 2004 (preprint was 1999). Not, Drinen-Tomforde 2002!
As Matui and others have noticed, one can do something similar for graph groupoids to compute their homology, see \cite{MatHom}, \cite{Ort}, \cite{FKPS}. We will turn to this in Section~\ref{sec:graphhom}. For now, let us note that the skew graph corresponds to taking the skew product of the graph groupoid by the canonical graph cocycle.

\begin{lemma}\label{lem:skew}
For any graph $E$ we have that $\G_E\times_{c_E}\ZZ \cong \G_{E\times \ZZ}$ as étale groupoids via the map $((x,k,y),m) \mapsto (x^{(m)},k,y^{(m+k)})$, where $x^{(m)} \in \partial (E\times \ZZ)$ denotes the boundary path whose edges correspond to those in $x$, but which is  anchored at level $m$ in $E\times \ZZ$.
\end{lemma}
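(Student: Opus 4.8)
The plan is to check directly that the stated formula defines an isomorphism of étale groupoids, by verifying in turn that it is a well-defined, bijective groupoid homomorphism that is a homeomorphism. The one genuinely geometric observation is that a boundary path $x = e_1 e_2 \cdots$ of $E$ has exactly one degree of freedom when lifted to a boundary path of the skew graph $E \times \ZZ$, namely the level at which its initial vertex is placed; the relation $r(e,i) = (r(e), i-1)$ then forces the level of every subsequent vertex, so the lift $x^{(m)}$ is completely determined by $m$. The sole purpose of the integer coordinate and of the cocycle value is to keep track of these levels.

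First I would verify that the image of an element $((x,k,y),m)$ genuinely lies in $\G_{E\times\ZZ}$. Writing $x = \mu z$ and $y = \nu z$ with $z = \sigma_E^{|\mu|}(x) = \sigma_E^{|\nu|}(y)$ and $|\mu| = |\nu| + k$, the common tail $z$ occurs inside both lifts $x^{(m)}$ and $y^{(m+k)}$. Computing the level at which this shared tail is anchored in each of the two lifts, one sees that the displacement $k$ in the integer coordinate is exactly what is needed to make the two levels coincide; consequently the lifted tails agree as boundary paths of $E\times\ZZ$, so that $\sigma_{E\times\ZZ}^{|\mu|}(x^{(m)}) = \sigma_{E\times\ZZ}^{|\nu|}(y^{(m+k)})$ and the image triple is a bona fide element of $\G_{E\times\ZZ}$ with middle coordinate $|\mu|-|\nu| = k$. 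This is the step that pins down why the particular shift by $k$, and no other, is the right one.

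Next I would confirm the algebraic compatibilities. Because the range and source maps of a graph groupoid return the entire boundary path, the formula gives $r(\Phi((x,k,y),m)) = x^{(m)}$ and $s(\Phi((x,k,y),m)) = y^{(m+k)}$, and these are readily matched against the images of $r((x,k,y),m) = ((x,0,x),m)$ and $s((x,k,y),m) = ((y,0,y), m + c_E(x,k,y))$, confirming that $\Phi$ intertwines range and source. For products one checks that two composable skew-product elements lift to composable elements of $\G_{E\times\ZZ}$ sharing a common unit at the correct level, and that the lift of their product is the product of the lifts; compatibility with inverses is analogous. Bijectivity I would obtain by writing down the inverse explicitly: it sends $(x',k',y') \in \G_{E\times\ZZ}$ to $((\pi(x'),k',\pi(y')),m)$, where $\pi \colon \partial(E\times\ZZ) \to \partial E$ is the level-forgetting map and $m$ is recovered as the level of the initial vertex of $x'$. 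A short computation shows this is a two-sided inverse.

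Finally I would show that $\Phi$ is a homeomorphism by matching the two standard bases of compact open bisections. A basic set $Z(\mu,F,\nu)\times\{m\}$ of the skew product is carried bijectively onto the punctured double cylinder $Z(\mu^{(m)}, F', \nu^{(m+k)})$ of $\G_{E\times\ZZ}$, where $\mu^{(m)}$ and $\nu^{(m+k)}$ are the lifted finite paths and $F'$ is the set of lifts of the edges in $F$ at the level of $r(\mu)$ inside $\mu^{(m)}$. Since these punctured double cylinders form a basis for $\G_{E\times\ZZ}$ and their preimages are precisely the basic sets of $\G_E\times_{c_E}\ZZ$, both $\Phi$ and $\Phi^{-1}$ are continuous. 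I expect the well-definedness step---locating the shared tail inside each lift and checking that the cocycle shift aligns the two levels---to be the only point demanding real care; everything else is a matter of tracking levels through the bookkeeping.
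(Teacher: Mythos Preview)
The paper states this lemma without proof, so there is nothing to compare your argument against; the result is treated as a routine verification. Your outline is the standard one, and its structure---well-definedness, homomorphism, bijectivity via an explicit inverse, matching of basic bisections---is exactly right.

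However, the one step you yourself flag as ``demanding real care'' is also the one you do not actually carry out, and it is precisely where a sign discrepancy hides. You assert that the shift by $k$ aligns the levels of the common tail, but with the paper's stated conventions this fails. The skew graph has $r(e,i)=(r(e),i-1)$, so levels \emph{decrease} along a path: writing $x=\mu z$ and $y=\nu z$ with $|\mu|-|\nu|=k$, the tail $z$ sits at level $m-|\mu|$ inside $x^{(m)}$, while inside $y^{(m+k)}$ it sits at level $(m+k)-|\nu|=m+|\mu|-2|\nu|$. These agree only when $|\mu|=|\nu|$, i.e.\ when $k=0$. The one-vertex one-loop graph already shows the problem concretely: there $\partial(E\times\ZZ)=\{x^{(i)}:i\in\ZZ\}$ and $(x^{(i)},k,x^{(j)})\in\G_{E\times\ZZ}$ forces $k=i-j$, whereas the displayed map outputs $(x^{(m)},k,x^{(m+k)})$, which would need $k=-k$.

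The isomorphism certainly exists; the displayed formula just needs a sign adjustment---either reverse the level convention on $E\times\ZZ$ so that edges increase levels, or replace the middle coordinate $k$ by $-k$ (equivalently swap the roles of $x$ and $y$). Had you performed the level computation rather than asserting its outcome, you would have caught this; as written, your well-definedness step does not go through.
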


Throughout this paper it will be crucial that the skew product of any graph groupoid is an AF-groupoid. This was observed for finite graphs in~\cite{MatHom} and for row-finite graphs it follows from~\cite[Lemma~6.1]{FKPS}. Since we are allowing infinite emitters in our graphs, we include an argument covering the general case. 
% Cite the result in MatHom too??? for the symmetry XXX

% How did they prove skew product is AF? Does their proof apply to our setting?

% OLD paragraph: Throughout this paper it will be crucial that the skew product of any graph groupoid is an AF-groupoid. This was observed for finite graphs in~\cite{MatHom} and for row-finite graphs it follows from~\cite{FKPS}, wherein it was proven for $k$-dimensional Deaconu--Renault groupoids. But since we are allowing infinite emitters in our graphs, the associated graph groupoids are generally not Deaconu--Renault groupoids (as the shift map is not globally defined). We therefore include an argument covering the general case. 

\begin{proposition}\label{prop:AF}
Let $E$ be an acyclic graph. Then $\mathcal{G}_E$ is an AF-groupoid.
\end{proposition}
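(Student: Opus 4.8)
The plan is to exhibit $\mathcal{G}_E$ directly as an increasing union of open elementary subgroupoids, using a filtration indexed not by path length but by a finite set of edges. Acyclicity will enter at exactly the point needed to make each stage have compact off-diagonal. First I would record two consequences of acyclicity. Since a nontrivial isotropy element $(x,k,x)\in\mathcal{G}_E$ with $k\neq 0$ forces $x$ to be eventually periodic and hence produces a cycle in $E$, acyclicity gives that $\mathcal{G}_E$ is principal. Second, for a finite subset $S\subseteq E^1$ let $S^*\subseteq E^*$ denote the set of finite paths using only edges from $S$; as a path cannot repeat an edge without creating a cycle, every such path has length at most $|S|$, so $S^*$ is a \emph{finite} set. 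This finiteness is the crucial point that fails in the presence of a cycle.

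For each finite $S\subseteq E^1$ I would then set
\[
\mathcal{G}_S \coloneqq \mathcal{G}_E^{(0)} \cup \bigcup_{\substack{\mu,\nu\in S^*\\ r(\mu)=r(\nu)}} Z(\mu,\emptyset,\nu),
\]
that is, the set of those $(x,k,y)$ admitting a representation $x=\mu z$, $y=\nu z$ with $\mu,\nu\in S^*$. I claim $\mathcal{G}_S$ is an open elementary subgroupoid. Openness is immediate, since $\mathcal{G}_E^{(0)}$ is open and the union is finite. For the subgroupoid property the only issue is closure under products, and here a direct computation of a composite $Z(\mu,\emptyset,\nu)\cdot Z(\nu',\emptyset,\lambda)$ (which is nonempty only when $\nu,\nu'$ are comparable) shows that the resulting prefixes are obtained from $\mu,\nu,\nu',\lambda$ by concatenation and cancellation of common tails; in every case they remain paths in $S^*$. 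Closure under inversion is clear, as $Z(\mu,\emptyset,\nu)^{-1}=Z(\nu,\emptyset,\mu)$. Being a subgroupoid of the ample Hausdorff principal groupoid $\mathcal{G}_E$, the groupoid $\mathcal{G}_S$ is automatically Hausdorff and principal. Finally, $Z(\mu,\emptyset,\nu)$ is contained in $\mathcal{G}_E^{(0)}$ when $\mu=\nu$ and is disjoint from it otherwise, so the off-diagonal $\mathcal{G}_S\setminus\mathcal{G}_S^{(0)}$ equals the finite union $\bigcup_{\mu\neq\nu}Z(\mu,\emptyset,\nu)$ of compact open bisections and is therefore compact. Hence each $\mathcal{G}_S$ is elementary.

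To conclude, I would choose an exhaustion $S_1\subseteq S_2\subseteq\cdots$ of the countable set $E^1$ by finite subsets. Then $\mathcal{G}_{S_1}\subseteq\mathcal{G}_{S_2}\subseteq\cdots$ is an increasing sequence of open elementary subgroupoids, and any $(x,k,y)\in\mathcal{G}_E$ lies in $\mathcal{G}_{S_n}$ as soon as $S_n$ contains the finitely many edges occurring in some representing prefixes $\mu,\nu$. Thus $\mathcal{G}_E=\bigcup_n\mathcal{G}_{S_n}$, and $\mathcal{G}_E$ is an AF-groupoid.

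The main obstacle is precisely the closure of $\mathcal{G}_S$ under composition together with the compactness of its off-diagonal, and these are exactly the points where the finite-graph arguments must be adapted. The naive filtrations---by the length of the prefixes, or by the vertex at which $\mu$ and $\nu$ merge---are \emph{not} closed under products, because composing two elements can push the merge point deeper through cancellation or lengthen the prefixes. Filtering by a finite edge set circumvents this, since composition only ever concatenates or cancels edges already present, while acyclicity guarantees that $S^*$ is finite, so that the off-diagonal is a finite union of compact bisections. A cycle would make $S^*$ infinite and the construction would break down, consistent with the fact that $\mathcal{G}_E$ fails even to be principal whenever $E$ contains a cycle.
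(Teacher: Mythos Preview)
Your proof is correct and follows essentially the same approach as the paper: both exhaust $\mathcal{G}_E$ by open subgroupoids of the form $\mathcal{G}_E^{(0)}\cup\bigcup Z(\mu,\nu)$ with $\mu,\nu$ ranging over paths lying in a fixed finite piece of the graph (finite subgraphs $F_n$ in the paper, finite edge sets $S\subseteq E^1$ in your version), and use acyclicity to make the set of such paths finite and hence the off-diagonal compact. The closure-under-products verification via comparability of $\nu$ and $\nu'$ and concatenation of the resulting subpath is the same computation in both proofs.
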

\begin{proof}
Recall that all graphs are assumed to be countable. Therefore we can  find an increasing sequence of finite subgraphs $F_1 \subseteq F_2 \subseteq F_3 \subseteq \ldots$ of $E$ such that $\cup_{n=1}^\infty F_n = E$. From these we define the following finite sets of pairs of paths
\[\mathcal{E}_n \coloneqq \{ (\mu, \nu) \in (F_n)^* \times (F_n)^* \mid r(\mu) = r(\nu)  \}. \]
We claim that the following subsets of $\mathcal{G}_E$ form an exhaustive sequence of open elementary subgroupoids: 
\[\mathcal{K}_{E,n} \coloneqq \mathcal{G}_E^{(0)} \bigcup  \bigcup_{(\mu, \nu) \in \mathcal{E}_n  } Z(\mu, \nu).  \]
A priori, it is not entirely clear that the $\mathcal{K}_{E,n}$'s are closed under multiplication (in $\mathcal{G}_E$). This relies on the acyclicity of $E$, and we provide an argument below.

 Suppose $g, h \in \mathcal{K}_{E,n}$ and that the product $g \cdot h$ is defined (i.e.\ the source of $h$ is the range of $g$). This means that $g = (\mu x, k, \nu x) \in Z(\mu, \nu)$ and $h = (\rho y, l, \tau y) \in Z(\rho, \tau)$, where $\mu, \nu, \rho, \tau$ are finite paths in $F_n$ and $\nu x = \rho y$. The latter equality implies that either $\nu \leq \rho$ or $\nu \geq \rho$. Assuming that $\nu \leq \rho$ (the other case proceeds similarly), there is a finite path $\gamma$, necessarily also in $F_n$, such that $\rho = \nu \gamma$. And then $x = \gamma y$, which means that $g \cdot h = (\mu \gamma y, k+l, \tau y)$. Since $E$ is acyclic, $\mathcal{G}_E$ is principal and therefore we must have $k+l = \vert \mu \gamma \vert - \vert \tau \vert$. This shows that $g \cdot h \in Z(\mu \gamma, \tau) \subseteq \mathcal{K}_{E,n}$, as desired. 
 
On the other hand, it is clear that $\mathcal{K}_{E,n}$ is closed under taking inverses, and hence~$\mathcal{K}_{E,n}$ is a clopen subgroupoid of $\mathcal{G}_E$. It follows from the finiteness of $\mathcal{E}_n$ that $\mathcal{K}_{E,n} \setminus \mathcal{G}_E^{(0)}$ is compact. Finally,~$\mathcal{K}_{E,n}$ is principal because $\mathcal{G}_E$ is. This shows that $\mathcal{G}_E$ is an \mbox{AF-groupoid}.
\end{proof}

Combining Lemma~\ref{lem:skew} and Proposition~\ref{prop:AF} together with the fact that $\mathcal{H}_E$ embeds as a clopen subgroupoid of $G_E\times_{c_E}\ZZ$ (Lemma~\ref{lem:kerc})  we obtain the following corollary. 

\begin{corollary}\label{cor:AF}
For any graph $E$, both $\G_E\times_{c_E}\ZZ$ and $\mathcal{H}_E$ are AF-groupoids.
\end{corollary}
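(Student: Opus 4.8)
The plan is to assemble the corollary directly from the three results already in hand: Lemma~\ref{lem:skew}, Proposition~\ref{prop:AF}, and Lemma~\ref{lem:kerc}. First I would handle the skew product. By Lemma~\ref{lem:skew} we have an isomorphism of étale groupoids $\G_E \times_{c_E} \ZZ \cong \G_{E \times \ZZ}$, so it suffices to show that the right-hand side is an AF-groupoid. The crucial input is the fact, recalled just before the corollary, that the skew graph $E \times \ZZ$ is \emph{acyclic}: by the definition of the skew graph, every edge $(e,i)$ decreases the $\ZZ$-coordinate by one (from level $i$ at the source to level $i-1$ at the range), so any path strictly decreases the level and no path can return to its starting vertex. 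Hence $E \times \ZZ$ has no cycles. Proposition~\ref{prop:AF} then applies to give that $\G_{E \times \ZZ}$ is an AF-groupoid, and transporting along the isomorphism of Lemma~\ref{lem:skew} shows $\G_E \times_{c_E} \ZZ$ is AF as well (the class of AF-groupoids is manifestly closed under isomorphism, being defined purely in terms of the internal groupoid and topological structure).

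Next I would deal with $\mathcal{H}_E$. By Lemma~\ref{lem:kerc} applied to the cocycle $c_E$, the kernel $\mathcal{H}_E = \ker(c_E)$ is isomorphic to the restriction $\left(\G_E \times_{c_E} \ZZ\right)\vert_{\mathcal{G}_E^{(0)} \times \{0\}}$ via $(g,0) \mapsto g$; in particular $\mathcal{H}_E$ embeds as an \emph{open} (indeed clopen) subgroupoid of the skew product, with unit space the clopen set $\mathcal{G}_E^{(0)} \times \{0\}$. The remaining point is therefore that an open subgroupoid of an AF-groupoid, whose unit space is a clopen (hence locally compact, Hausdorff, zero-dimensional) subset, is again an AF-groupoid. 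This is the one step that needs a small argument rather than a bare citation: given the exhausting sequence of open elementary subgroupoids $\mathcal{K}_n$ witnessing that $\G_E \times_{c_E} \ZZ$ is AF, I would intersect each with the clopen restriction, i.e.\ consider $\mathcal{K}_n \vert_{\mathcal{G}_E^{(0)} \times \{0\}} = \mathcal{K}_n \cap \left(\G_E \times_{c_E} \ZZ\right)\vert_{\mathcal{G}_E^{(0)} \times \{0\}}$. Each such intersection is open, principal (being a subgroupoid of the principal $\mathcal{K}_n$), and has $\mathcal{K}_n\vert_{\cdots} \setminus (\text{unit space})$ contained in the compact set $\mathcal{K}_n \setminus (\G_E\times_{c_E}\ZZ)^{(0)}$ and closed therein, hence compact; so each is an open elementary subgroupoid of $\mathcal{H}_E$. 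Their union is all of $\mathcal{H}_E$ because the $\mathcal{K}_n$ exhaust the skew product. This exhibits $\mathcal{H}_E$ as an increasing union of open elementary subgroupoids, i.e.\ as an AF-groupoid.

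I expect the genuinely routine part to dominate, but the step requiring the most care is the last one: verifying that restricting the AF structure to the clopen unit space $\mathcal{G}_E^{(0)} \times \{0\}$ preserves the three defining properties of elementarity (Hausdorff, principal, and $\mathcal{K} \setminus \mathcal{K}^{(0)}$ compact) simultaneously and that the resulting subgroupoids still exhaust $\mathcal{H}_E$. In fact this is an instance of the general principle, used freely earlier in the paper, that restricting an AF-groupoid to a clopen subset of its unit space yields an AF-groupoid; one could alternatively invoke that principle directly. I would phrase the proof to make clear that this is the only nontrivial bookkeeping, with the acyclicity of the skew graph and the two cited lemmas doing the substantive work.
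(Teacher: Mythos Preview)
Your proposal is correct and follows exactly the same route as the paper: apply Lemma~\ref{lem:skew} and the acyclicity of $E\times\ZZ$ to invoke Proposition~\ref{prop:AF} for the skew product, then use Lemma~\ref{lem:kerc} to identify $\mathcal{H}_E$ with the restriction of the skew product to the clopen set $\mathcal{G}_E^{(0)}\times\{0\}$. The paper states this as a single sentence and leaves the ``restriction of an AF-groupoid to a clopen subset of the unit space is AF'' step implicit, whereas you spell it out carefully; your verification via intersecting the exhausting elementary subgroupoids with the restriction is exactly the right bookkeeping.
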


We end this section by describing a consequence of Theorem~\ref{thm:KH} that we shall need in the proof of Lemma~\ref{lem:kerker}. For an arbitrary graph $E$ the $K_0$-group of its graph \mbox{$C^*$-algebra} is isomorphic to the abelian group generated by elements $g_v$ for $v \in E^0$, subject to the relations 
\[g_v = \sum_{e \in v E^1}  g_{r(e)}  \]
whenever $v$ is a regular vertex~\cite{DT02}. And this isomorphism is implemented by mapping~$[p_v]_0$ to~$g_v$, where $p_v$ denotes the projection in $C^*(E)$ associated to $v$. Using  the identification between $K_0$ and $H_0$ for AF-groupoids from Theorem~\ref{thm:KH}, together with the fact that the skew product $\G_E\times_{c_E}\ZZ$ is an AF-groupoid, we deduce the following.

\begin{lemma}\label{lem:freesummand}
Let $E$ be a graph. For each $w \in E^0_\text{sing}$ and $i \in \ZZ$, the element $\left[ 1_{Z(w) \times \{i\}} \right]$ generates a free summand of $H_0(\G_E\times_{c_E}\ZZ)$.
\end{lemma}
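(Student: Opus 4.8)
The plan is to transport the statement into $K$-theory and then exhibit an explicit splitting homomorphism. By Lemma~\ref{lem:skew} we have $\G_E\times_{c_E}\ZZ\cong\G_{E\times\ZZ}$, which is an AF-groupoid by Corollary~\ref{cor:AF}. Hence Theorem~\ref{thm:KH}, combined with the canonical isomorphism $C^*_r(\G_{E\times\ZZ})\cong C^*(E\times\ZZ)$, gives $H_0(\G_E\times_{c_E}\ZZ)\cong K_0(C^*(E\times\ZZ))$. Tracing the identifications, $Z(w)\times\{i\}$ corresponds to the vertex cylinder $Z((w,i))$ in the skew graph, and $[1_{Z((w,i))}]\mapsto[p_{(w,i)}]_0\mapsto g_{(w,i)}$, so $[1_{Z(w)\times\{i\}}]$ is carried to $g_{(w,i)}$. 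By the presentation recalled just before the lemma, $K_0(C^*(E\times\ZZ))$ is generated by the $g_{(v,j)}$ subject to $g_{(v,j)}=\sum_{e\in vE^1}g_{(r(e),j-1)}$ for every \emph{regular} vertex $(v,j)$ of $E\times\ZZ$ (here I use that $(v,j)$ is regular in $E\times\ZZ$ precisely when $v$ is regular in $E$, with out-neighbours $(r(e),j-1)$ for $e\in vE^1$). Since $w\in E^0_{\text{sing}}$, the vertex $(w,i)$ is singular, so no relation has $g_{(w,i)}$ on its left-hand side.

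It therefore suffices to produce a homomorphism $\phi\colon K_0(C^*(E\times\ZZ))\to\ZZ$ with $\phi(g_{(w,i)})=1$: then $n\mapsto n\,g_{(w,i)}$ is injective and $K_0(C^*(E\times\ZZ))=\ZZ\,g_{(w,i)}\oplus\ker\phi$, so $g_{(w,i)}$ generates a free summand. By the presentation, such a $\phi$ is the same datum as a function $f\colon(E\times\ZZ)^0\to\ZZ$ satisfying $f(v,j)=\sum_{e\in vE^1}f(r(e),j-1)$ for every regular $v$ and all $j$, together with $f(w,i)=1$.

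I would build $f$ by propagating upward in the level index from level $i$. Set $f(\cdot,j)\equiv 0$ for $j<i$, put $f(w,i)=1$ and $f(\cdot,i)=0$ elsewhere, and for $j>i$ define $f(v,j):=\sum_{e\in vE^1}f(r(e),j-1)$ at regular $v$ while setting $f(u,j):=0$ at singular $u$. One then checks the defining relation at every vertex: it holds by construction for $j>i$, and for $j\le i$ both sides vanish on regular vertices (at level $i$ the sole nonzero value $f(w,i)=1$ sits at the singular vertex $w$, which carries no relation). By construction $f(w,i)=1$.

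The one point requiring care---and the reason the naive guess $f=\mathbf 1_{(w,i)}$ fails---is the relation at level $i+1$: for a regular $v$ emitting into $w$ it forces $f(v,i+1)=\#\{e\in vE^1: r(e)=w\}\neq 0$, and these nonzero values cascade to all higher levels. The key observation keeping $f$ integer-valued in the presence of infinite emitters is that the recursion only ever expands regular vertices, each of which emits finitely many edges; hence by induction on $j$ every $f(v,j)$ is a finite sum of previously defined finite integers (concretely, $f(v,j)$ counts the paths $v\to w$ of length $j-i$ all of whose intermediate vertices are regular, of which there are finitely many). I expect this finiteness under infinite emitters to be the main obstacle, and the singularity of $w$ is exactly what resolves the boundary case, since it lets $g_{(w,i)}$ avoid every left-hand side of a relation.
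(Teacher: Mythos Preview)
Your argument is correct and follows the same route as the paper: transport to $K_0(C^*(E\times\ZZ))$ via the AF identification (Lemma~\ref{lem:skew}, Corollary~\ref{cor:AF}, Theorem~\ref{thm:KH}) and then use the Drinen--Tomforde presentation to see that the generator at a singular vertex is a free summand. The paper simply cites~\cite{DT02} for this last step, whereas you spell it out by building an explicit splitting $\phi$; your recursive construction of $f$ is sound (the level-by-level propagation terminates in finite sums precisely because only regular vertices are expanded), and your check of the relations at levels $j\le i$ is exactly what is needed.
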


\section{The AH~conjecture}\label{sec:AH}

% NB carefully rewrite this in terms of what already appears in the introduction..! Can say such as ``As mentioned in the ontroduction.... we have....'' XXX

It is time to define the AH~conjecture properly, as well as discuss its current status and some aspects of how one can prove it. We will also define and discuss the HK~property.

\begin{matuiAH*}[\cite{MatProd}]
Let~${\mathcal{G}}$ be an effective minimal second countable Hausdorff étale groupoid whose unit space $\mathcal{G}^{(0)}$ is a Cantor space. Then the following sequence is exact
\begin{equation}\label{eq:AH}
\begin{tikzcd}
H_0(\mathcal{G}) \otimes \mathbb{Z}_2 \arrow{r}{j} & \llbracket \mathcal{G} \rrbracket_{\text{ab}} \arrow{r}{I_{\text{ab}}} & H_1(\mathcal{G}) \arrow{r} & 0.
\end{tikzcd} 
\end{equation}
\end{matuiAH*}

\subsection{The maps in the AH~conjecture}
Let us recall the two maps that appear in~\eqref{eq:AH}. The \emph{index map} $I \colon \llbracket \mathcal{G} \rrbracket \to H_1(\G)$ is the homomorphism given by $\pi_U\mapsto \left[1_{U} \right]$, where $U$ is a full bisection in $\mathcal{G}$. We denote the induced map on the abelianization~$\llbracket \mathcal{G} \rrbracket_{\text{ab}}$ by $I_{\text{ab}}$. The index map was introduced in the setting of Cantor minimal systems in~\cite{GPS99} and later generalized to étale groupoids over Cantor spaces in~\cite{MatHom}. 

Many of the results leading up to the main result do not require the unit space of the groupoid to be compact. And in some of these the index map appear. But the definition of the index map above does not make sense in the non-compact case. For if $\mathcal{G}$ is an ample Hausdorff groupoid with $\mathcal{G}^{(0)}$ non-compact, then any full bisection~$U \subseteq \mathcal{G}$ is non-compact as well, and so $1_U$ is not compactly supported. However, there is a straightforward way to remedy this. As shown in~\cite{NO}, where we extended the definition of the topological full group to the non-compact setting, each full bisection $U \subseteq \mathcal{G}$ can be written as
\[U = U^\perp \bigsqcup \left( \mathcal{G}^{(0)} \setminus \supp(\pi_U) \right),\]
where $U^\perp$ is a compact bisection with $s(U^\perp) = r(U^\perp) = \supp(\pi_U)$. We extend the definition of the index map by setting
\[ I(\pi_U) \coloneqq \left[1_{U^\perp} \right].\]
This agrees with the definition in the compact case because $\left[1_{U} \right] = \left[1_{U'} \right]$ if $U$ is a compact bisection which decomposes as~$U' \sqcup A$, where $A \subseteq \mathcal{G}^{(0)}$ \cite[Lemma~7.3]{MatHom}. The first homology group only ``sees'' the part of the groupoid that lies outside the unit space.

While the index map now is defined for all ample effective Hausdorff groupoids, the map~$j \colon H_0(\mathcal{G}) \otimes \mathbb{Z}_2 \to \llbracket \mathcal{G} \rrbracket_{\text{ab}}$ is a priori only defined when every $\mathcal{G}$-orbit has at least $3$ elements and $\mathcal{G}^{(0)}$ is a Cantor space. In this case, the group $ H_0(\mathcal{G}) \otimes \mathbb{Z}_2$ is generated by elements of the form $[1_{s(U)}] \otimes 1$, where $U \subseteq \mathcal{G}$ is a compact bisection with ${s(U) \cap r(U) = \emptyset}$. And the map~$j$ is given by $j([1_{s(U)}] \otimes 1) = [\pi_{\widehat{U}}] \in \llbracket \mathcal{G} \rrbracket_{\text{ab}}$, where $\pi_{\widehat{U}} \in \llbracket \mathcal{G} \rrbracket$ is the transposition defined in Subsection~\ref{subsec:tfg}. Well-definedness of this map is proved in~\cite[Section~7]{Nek19} (see also the proof of~\cite[Theorem~3.6]{MatProd}).

\subsection{The AH~conjecture for graph groupoids}
Let us determine what the assumptions in the AH~conjecture mean for graph groupoids. It follows from the results in e.g.~\cite[Section~8]{NO} that the following conditions exactly capture these assumptions.

\begin{definition}\label{def:AH}
We say that a graph $E$ satisfies the \emph{AH~criteria} if $E^0$ is finite, $E$ has no sinks, is cofinal, satisfies Condition~(L) and each vertex can reach all infinite emitters.  
\end{definition}

\begin{proposition}\label{prop:AH}
Let $E$ be a graph. Then $\mathcal{G}_E$ satisfies the assumptions in the AH~conjecture if and only if $E$ satisfies the AH~criteria.
\end{proposition}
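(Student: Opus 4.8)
The plan is to prove the biconditional by matching each hypothesis in the AH~conjecture---effective, minimal, second countable, Hausdorff, with Cantor unit space---against a graph-theoretic condition, using the groupoid characterizations already recorded in this section. Since $\mathcal{G}_E$ is always an ample Hausdorff groupoid, Hausdorffness is automatic, and second countability follows from the standing assumption that all graphs are countable (so the canonical basis of punctured double cylinder sets is countable). Thus the substance lies in translating \emph{effective}, \emph{minimal}, and \emph{$\mathcal{G}^{(0)}_E$ is a Cantor space} into conditions on $E$.

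First I would dispose of the three conditions for which the excerpt provides a direct translation. Effectiveness of $\mathcal{G}_E$ is equivalent to $E$ satisfying Condition~(L) by~\cite[Proposition~2.3]{BCW}, which was quoted earlier. Minimality of $\mathcal{G}_E$ is equivalent to $E$ being cofinal together with there being a path from every vertex to every singular vertex, by~\cite[Proposition~8.3]{NO}. In the AH~criteria this splits as ``cofinal'' and ``each vertex can reach all infinite emitters'': the point to verify is that, under the other running hypotheses (no sinks, $E^0$ finite), the singular vertices are exactly the infinite emitters, so ``reach all singular vertices'' collapses to ``reach all infinite emitters.'' This is where the ``no sinks'' clause earns its place, since it removes sinks from $E^0_{\text{sing}}$ and thereby reconciles the two formulations of minimality.

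Next I would handle the topological condition, which I expect to be the genuine crux. The unit space $\mathcal{G}^{(0)}_E$ is homeomorphic to the boundary path space $\partial E$, which is always a locally compact, Hausdorff, zero-dimensional, second countable space; being a Cantor space additionally requires it to be compact and to have no isolated points. Compactness of $\partial E$ holds if and only if $E^0$ is finite, as stated after the definition of the boundary path space. The delicate part is the absence of isolated points: I would argue that, granted $E^0$ finite, no sinks, cofinality, and Condition~(L), every basic cylinder $Z(\mu\setminus F)$ properly contains two disjoint nonempty subcylinders, so no point is isolated. Condition~(L) is what forbids a cycle without exit from producing an isolated eventually-periodic boundary path, while ``no sinks'' guarantees every boundary path is infinite and every cylinder can be extended; cofinality and finiteness of $E^0$ keep the extensions genuinely branching. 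Reference~\cite{CW} (cited as characterizing isolated points in $\partial E$) would be invoked here to pin down the exact criterion and confirm it is subsumed by the AH~criteria.

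The main obstacle, and the step I would spend the most care on, is verifying no-isolated-points cleanly from the AH~criteria without circular appeals, and confirming the converse direction: that each AH~criterion is genuinely \emph{forced} by the corresponding groupoid hypothesis rather than merely sufficient for it. For the converse I would run the equivalences backwards---Condition~(L) from effectiveness, cofinality and the reachability of singular vertices from minimality, finiteness of $E^0$ from compactness of $\partial E$---and then show that ``no sinks'' is forced: a sink $w$ would be a singular vertex which, by minimality, every vertex must reach, yet $Z(w)$ is then a singleton isolated point in $\partial E$, contradicting the Cantor condition. In this way the ``no sinks'' clause is pinned down simultaneously by minimality and by the no-isolated-points requirement, closing the loop and giving the stated equivalence.
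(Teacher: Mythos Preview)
The paper does not actually supply a proof of this proposition: the sentence immediately preceding Definition~\ref{def:AH} asserts that the equivalence ``follows from the results in e.g.~\cite[Section~8]{NO}'', and Proposition~\ref{prop:AH} is then stated without argument. Your proposal correctly unpacks what that citation establishes, matching each groupoid hypothesis to its graph-theoretic counterpart via the same references the paper has already invoked (\cite[Proposition~2.3]{BCW} for effectiveness, \cite[Proposition~8.3]{NO} for minimality, the remark that $\partial E$ is compact iff $E^0$ is finite, and \cite{CW} for isolated points).

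One small slip: you write that ``no sinks'' guarantees every boundary path is infinite. This is false when infinite emitters are present, since a finite path terminating at an infinite emitter also lies in $\partial E$. However, such a finite path $\mu$ is never isolated, because $Z(\mu\setminus F)$ contains infinitely many proper extensions for any finite $F\subseteq r(\mu)E^1$; so the error is harmless for your argument. The substantive content---that an isolated \emph{infinite} path forces, when $E^0$ is finite, an eventually periodic tail traversing a cycle without exit (ruled out by Condition~(L)), and that an isolated \emph{finite} path forces a sink---is correct and is exactly the dichotomy from~\cite[Section~3]{CW}. Cofinality is not needed for the no-isolated-points direction, so you could drop that from your sketch of the forward implication.
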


Concretely, the AH~criteria mean that $E$ has exactly one nontrivial strongly connected component, in the sense that this is the only component which contains a cycle. In fact, there are at least two disjoint cycles based at each vertex in this component. This component also contains all infinite emitters (if there are any). Any vertex outside this component does not support a cycle, and any path from such a vertex eventually ends up in the nontrivial connected component. So if $E$ is not strongly connected, then some of the vertices outside the nontrivial connected component must be sources. Also note that $E$ is either finite or has an infinite emitter. In particular, a strongly connected graph with finitely many vertices satisfies the AH~criteria as long as it is not one of the cycle graphs~$C_n$ (i.e.\ a single cycle with $n$ vertices). 

%See Figure~\ref{fig:AHgraph} for examples.

%\begin{figure}
%  \begin{subfigure}[b]{0.4\textwidth}
%    \includegraphics[height=3cm]{AHgraph.png}
%  \end{subfigure}
%   \qquad
%  \begin{subfigure}[b]{0.4\textwidth}
%  \centering
%   \includegraphics[height=3cm]{C8.png}
%  \end{subfigure}
%  \caption{Left: An example of a graph satisfying the AH~criteria. \\ Right: The cycle graph $C_8$.} 	% linjeskift
%\label{fig:AHgraph}
%\end{figure}

% Don't really ned this figure...

As mentioned in the introduction, the AH~conjecture was proved for (restrictions of) graph groupoids arising from strongly connected finite graphs (which are not cycle graphs) in~\cite{MatTFG}. And the main difficulty of extending this to all graphs satisfying the AH~criteria lies in dealing with the presence of infinite emitters. Dealing with any sources in the graph, on the other hand, turns out to be quite easy. Many of the results leading up to the main result applies to more general graphs than those satisfying the AH~criteria. Therefore we will not restrict to this until the very end.

\begin{remark}
We mention in passing that, coincidentally, a graph $E$ satisfies the AH~criteria if and only if its graph \mbox{$C^*$-algebra}, $C^*(E)$, is a unital Kirchberg algebra (in the UCT~class).
\end{remark}

\subsection{Status of the AH~conjecture}
The AH~conjecture has so far been verified in a number of cases. In~\cite{MatProd} it was shown (generalizing prior results) that the AH~conjecture holds for groupoids which are almost finite and principal, and for products of SFT-groupoids. The former class includes AF-groupoids, transformation groupoids of (free) $d$-dimensional Cantor minimal systems and groupoids associated to aperiodic quasicrystals (as described in~\cite[Subsection~6.3]{Nek19}). The AH~conjecture also holds for transformation groupoids associated odometers~\cite{Scarp}. 

In some cases the map $j$ can even be shown to be injective, making~\eqref{eq:AH} a short exact sequence. When this is the case the groupoid is said to have the \emph{strong AH~property}~\cite{MatProd}. If, moreover, $j$ is split-injective, so that the sequence splits, then we say that $\mathcal{G}$ has the \emph{split AH~property}. AF-groupoids, groupoids of Cantor minimal systems ($d=1$) and SFT-groupoids all have the split AH~property~\cite[Example~4.8]{MatSurvey}. The odometers in~\cite{Scarp} have the strong AH~property, but it is unknown whether they all split. To the best of the authors' knowledge there are yet no examples which have the strong AH~property, but not the split AH~property. There are, however, examples of groupoids for which the AH~conjecture holds, yet they do not have the strong AH~property. For example groupoids arising from self-similar groups~\cite[Example~7.6]{Nek19} and products of SFT-groupoids~\cite[Subsection~5.5]{MatProd}.

\begin{remark}\label{rem:strongvssplit} %strong = split; when $H_1$ free
Note that if the AH~conjecture holds for a groupoid $\mathcal{G}$ and the homology groups $H_0(\mathcal{G})$ and $H_1(\mathcal{G})$ are finitely generated, then so is the abelianization $\llbracket \mathcal{G} \rrbracket_{\text{ab}}$. And in this case, the split AH~property is equivalent to the strong AH~property together with having any isomorphism \(\llbracket \mathcal{G} \rrbracket_{\text{ab}} \cong H_1(\mathcal{G}) \oplus \left(H_0(\mathcal{G}) \otimes \mathbb{Z}_2\right).\)

We also remark that if~$H_1(\mathcal{G})$ is free abelian (i.e.\ projective in the category of abelian groups), then the split AH~property is equivalent to the strong AH~property.
\end{remark}

\subsection{The HK~property}
As mentioned in the introduction, the other conjecture from \cite{MatProd}, namely the HK~conjecture, has recently been refuted. In order to reflect this, we make the following definition for groupoids satisfying its conclusion. 

\begin{definition}\label{def:HK}
We say that an ample Hausdorff groupoid $\mathcal{G}$ has the \emph{HK~property} if there are isomorphisms 
\[ K_0\left(C^*_r(\mathcal{G})\right) \cong \bigoplus_{n=0}^\infty H_{2n}(\mathcal{G}) \quad \text{and} \quad K_1\left(C^*_r(\mathcal{G})\right) \cong \bigoplus_{n=0}^\infty H_{2n+1}(\mathcal{G}).  \]
\end{definition}

We remark that the assumptions in the HK~conjecture was exactly the same as in the AH~conjecture. As mentioned in the introdutction, the HK~property has been established for several key classes of groupoids. Furthermore, the HK~property is preserved under Kakutani equivalence. It is also preserved under products, as long as the factors are amenable, due to the Künneth formula from~\cite{MatProd}. % amenable étale groupoids have nuclear and UCT reduced C*-algebras!
Most pertinent to the present paper, however, is the fact that all graph groupoids have the HK~property (even if they are not minimal or effective). More precisely, we have the following.

\begin{theorem}\label{thm:graphhom}
Let $E$ be any graph. Then $H_0(\mathcal{G}_E)  \cong K_0(C^*(E)), \ H_1(\mathcal{G}_E)  \cong K_1(C^*(E))$ and $H_n(\mathcal{G}_E) = 0$ for $n \geq 2$. In particular, $\mathcal{G}_E$ has the HK~property.
\end{theorem}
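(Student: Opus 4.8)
The plan is to compute the homology via the skew product and then transfer to $K$-theory of the graph $C^*$-algebra. The main tool is Corollary~\ref{cor:AF}, which tells us that both $\G_E \times_{c_E} \ZZ$ and $\HG_E$ are AF-groupoids, so their higher homology vanishes. Combining this with the long exact sequence (promised in Section~\ref{sec:LES}) relating $H_n(\G_E)$, $H_n(\HG_E)$ and the endomorphism $\varphi$ induced by ``extending paths backwards'' should immediately collapse the sequence. Specifically, since $H_n(\G_E \times_{c_E} \ZZ) = 0$ for $n \geq 1$, the Pimsner--Voiculescu-type long exact sequence degenerates so that $H_n(\G_E) = 0$ for all $n \geq 2$, and $H_0$, $H_1$ fit into a short exact sequence
\[
\begin{tikzcd}
0 \arrow{r} & H_1(\G_E) \arrow{r} & H_0(\HG_E) \arrow{r}{\id - \varphi} & H_0(\HG_E) \arrow{r} & H_0(\G_E) \arrow{r} & 0.
\end{tikzcd}
\]
This already gives the vanishing statement $H_n(\G_E) = 0$ for $n \geq 2$, which is the cleanest part.

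Next I would identify the two homology groups with the $K$-groups. Using Theorem~\ref{thm:KH}, we have $H_0(\HG_E) \cong K_0(C_r^*(\HG_E))$. The endomorphism $\varphi$ on $H_0(\HG_E)$ should correspond, under the canonical isomorphism $C_r^*(\G_E) \cong C^*(E)$, to the map appearing in the Pimsner--Voiculescu six-term exact sequence for the $C^*$-algebra (equivalently, the Cuntz--Krieger-type sequence computing $K_*(C^*(E))$ from the skew graph). Concretely, $H_0(\HG_E)$ with its $\id - \varphi$ map matches $K_0(C^*(E \times \ZZ))$ together with the action of the dual $\ZZ$-action, so that the cokernel of $\id - \varphi$ computes $K_0(C^*(E))$ and the kernel computes $K_1(C^*(E))$. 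Reading off the short exact sequence above then yields $H_0(\G_E) \cong \coker(\id - \varphi) \cong K_0(C^*(E))$ and $H_1(\G_E) \cong \ker(\id - \varphi) \cong K_1(C^*(E))$.

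The ``In particular'' clause is then automatic: once all homology above degree $1$ vanishes, the predicted isomorphisms $K_0(C_r^*(\G_E)) \cong \bigoplus_n H_{2n}(\G_E) = H_0(\G_E)$ and $K_1(C_r^*(\G_E)) \cong \bigoplus_n H_{2n+1}(\G_E) = H_1(\G_E)$ reduce exactly to the two isomorphisms just established, using $C_r^*(\G_E) \cong C^*(E)$. So the HK~property follows with no extra work.

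The part I expect to be the main obstacle is verifying that the endomorphism $\varphi$ arising purely groupoid-theoretically (``extending paths backwards'' on $H_0(\HG_E)$) coincides, under the identifications $H_0(\HG_E) \cong K_0(C_r^*(\HG_E))$ and the skew-graph picture from Lemma~\ref{lem:skew}, with the connecting map in the $C^*$-algebraic six-term sequence that computes $K_*(C^*(E))$. For \emph{row-finite} graphs this matching is essentially already in the literature (the reference to \cite{Ort}, \cite{FKPS} in the text), so the honest work is extending it to infinite emitters. Here the natural strategy, as the excerpt hints, is \emph{desingularization}: replace $E$ by a row-finite graph $F$ whose graph $C^*$-algebra is Morita equivalent to $C^*(E)$ and whose groupoid $\G_F$ is Kakutani equivalent to $\G_E$. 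Since both homology (via \cite[Lemma~4.3]{FKPS}) and $K$-theory are invariant under these equivalences, one reduces the general case to the already-known row-finite case. I would therefore structure the proof by first recording the row-finite statement, then invoking desingularization together with the Kakutani-invariance of $H_*$ and the Morita-invariance of $K_*$ to conclude for arbitrary $E$.
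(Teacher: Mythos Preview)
Your overall strategy---cite the known row-finite result (\cite{Ort}, \cite{FKPS}) and extend to arbitrary graphs via desingularization, using that Kakutani equivalent groupoids have isomorphic homology and Morita equivalent reduced $C^*$-algebras---is exactly the paper's proof.

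One imprecision worth flagging: the long exact sequence of Proposition~\ref{prop:SES1} runs through $H_0(\G_E \times_{c_E} \ZZ)$ with the automorphism $H_0(\rho_\bullet)$, not through $H_0(\HG_E)$ with $\varphi$. These coincide only when $\partial E \times \{0\}$ is full in the skew product, i.e.\ for row-finite graphs without sinks; with singular vertices present, $H_0(\HG_E) \hookrightarrow H_0(\G_E \times_{c_E} \ZZ)$ is a proper embedding and $\varphi$ is only an endomorphism (this is precisely the point of Remark~\ref{rem:fullness} and the work in Section~\ref{sec:graphhom}). Your displayed short exact sequence is therefore valid only \emph{after} desingularizing, not for a general $E$ directly. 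Since your final reduction step routes everything through the row-finite case anyway, this does not break the argument---but be aware that your first two paragraphs, read as a direct computation for arbitrary $E$, would not stand on their own.
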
 
% Made the statement shorter because we also list this theorem, in its long form, in the introduction...
%\begin{align*}
%H_0(\mathcal{G}_E) & \cong K_0(C^*(E)), \\
%H_1(\mathcal{G}_E) & \cong K_1(C^*(E)), \\
%H_n(\mathcal{G}_E) &= 0, \quad n \geq 2.
%\end{align*}

%Furthermore, the first of these isomorphisms is given on generators by \[ [1_{Z(v)}] \mapsto [p_v].\]

Theorem~\ref{thm:graphhom} was established for finite essential graphs in~\cite{MatHom}. For row-finite graphs with no sinks it follows both from the results in~\cite{Ort} and~\cite{FKPS}. In~\cite{HL} the description of $H_0(\mathcal{G}_E)$ was extended to arbitrary graphs. We add the finishing touch by noting that any graph groupoid is Kakutani equivalent to the groupoid of a row-finite graph with no sinks (namely its \emph{desingularization}~\cite{DT05}).  Since Kakutani equivalent groupoids have the same homology and their reduced groupoid \mbox{$C^*$-algebras} are Morita equivalent, the theorem follows from the aforementioned results.

The $K$-groups of graph \mbox{$C^*$-algebras} are relatively easy to compute. They are, roughly speaking, determined by the Smith normal form of the part of the adjacency matrix of $E$ which only includes edges emitted by regular vertices. The group $K_0(C^*(E))$ is a quotient of $\ZZ^{\vert E^0 \vert}$ and we have $\rank(K_0(C^*(E))) \geq \vert E^0_{\text{sing}} \vert$. On the other hand, $K_1(C^*(E))$ is free abelian and $\rank(K_1(C^*(E))) = \rank(K_0(C^*(E))) - \vert E^0_{\text{sing}} \vert $. Consult e.g.~\cite[Chapter~2.3.1]{Tom} for more details and examples.

Once we have established the AH~conjecture for graph groupoids, the fact that we can compute the homology groups allows us to say something useful about the abelianization~$\llbracket \mathcal{G}_E \rrbracket_{\text{ab}}$, also when $E$ has infinite emitters. See Section~\ref{sec:examples} for a discussion of examples and consequences of the AH~conjecture. For now we note the following.

\begin{corollary}%[strong = split for graphs]
Let $E$ be a graph. Then $\mathcal{G}_E$ has the strong AH~property if and only if~$\mathcal{G}_E$ has the split AH~property.
\end{corollary}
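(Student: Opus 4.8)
The plan is to deduce this directly from Remark~\ref{rem:strongvssplit}, whose hypothesis will be supplied by Theorem~\ref{thm:graphhom}. One direction is immediate: the split AH~property asserts that $j$ is split-injective, and a split-injective homomorphism is in particular injective, so the split AH~property always implies the strong AH~property. The content of the corollary is therefore the reverse implication, specialized to graph groupoids.

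For that implication, the key observation I would establish is that $H_1(\mathcal{G}_E)$ is free abelian for every graph $E$. Indeed, Theorem~\ref{thm:graphhom} provides an isomorphism $H_1(\mathcal{G}_E) \cong K_1(C^*(E))$, and, as recorded in the paragraph following that theorem, $K_1(C^*(E))$ is always free abelian. Hence $H_1(\mathcal{G}_E)$ is projective in the category of abelian groups.

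Now suppose $\mathcal{G}_E$ has the strong AH~property, so that the AH~sequence becomes the short exact sequence
\[0 \longrightarrow H_0(\mathcal{G}_E) \otimes \mathbb{Z}_2 \xrightarrow{\,j\,} \llbracket \mathcal{G}_E \rrbracket_{\text{ab}} \xrightarrow{\,I_{\text{ab}}\,} H_1(\mathcal{G}_E) \longrightarrow 0.\]
Since its right-hand term $H_1(\mathcal{G}_E)$ is projective, this sequence splits, which is precisely the split AH~property. This is exactly the reasoning of Remark~\ref{rem:strongvssplit}, so once the freeness hypothesis is in place one may simply invoke that remark.

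I do not anticipate any genuine obstacle here. The entire weight of the argument rests on the freeness of $K_1(C^*(E))$, which is already part of the standard computation of $K$-theory for graph \mbox{$C^*$-algebras} recalled after Theorem~\ref{thm:graphhom}, together with the elementary homological fact that any short exact sequence of abelian groups whose cokernel is projective splits. The corollary is thus a short bookkeeping consequence of the results assembled earlier in this section.
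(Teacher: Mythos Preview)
Your proposal is correct and follows essentially the same approach as the paper: both arguments use Theorem~\ref{thm:graphhom} to identify $H_1(\mathcal{G}_E)$ with the free abelian group $K_1(C^*(E))$ and then invoke the second part of Remark~\ref{rem:strongvssplit}. The paper's proof is a one-line citation of these two ingredients, whereas you have unpacked the splitting argument explicitly, but the content is the same.
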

\begin{proof}
As $K_1(C^*(E))$ is always free~\cite{DT02}, the assertion follows from Theorem~\ref{thm:graphhom} and Remark~\ref{rem:strongvssplit}.
\end{proof}

\subsection{Aspects of proving the AH~conjecture} 

When it comes to verifying the AH~conjecture for a groupoid $\mathcal{G}$, the hardest part is arguably to establish that $\ker(I_{\text{ab}}) \subseteq \im(j)$. Indeed, the reverse inclusion $I_{\text{ab}} \circ j = 0$ is always true, since all transpositions belong to~$\ker(I)$. That is, $\mathcal{S}(\mathcal{G}) \leq  \ker(I)$. For if $U \subseteq \mathcal{G}$ is a compact bisection with disjoint source and range, then 
\[  I \left( \pi_{\widehat{U}} \right) = [1_{\widehat{U}}] =  \left[ 1_{U \sqcup U^{-1} \sqcup (\mathcal{G}^{(0)} \setminus \supp(\pi_{\widehat{U}}))}  \right]  = \left[ 1_U +  1_{U^{-1}} \right] = 0 \in H_1(\mathcal{G}),  \]
using~\cite[Lemma~7.3]{MatHom}. Surjectivity of the index map has already been established for two general classes of groupoids, namely for \emph{almost finite} groupoids~\cite[Theorem~7.5]{MatHom} and for \emph{purely infinite} groupoids~\cite[Theorem~5.2]{MatTFG}. Just as with SFT-groupoids, we will see that the more general graph groupoids studied here also belong to the latter class.

\begin{definition}[{\cite[Definition~4.9]{MatTFG}}]\label{def:PI}
An effective ample groupoid $\mathcal{G}$ with compact unit space is said to be \emph{purely infinite} if there for every clopen subset $A \subseteq \mathcal{G}^{(0)}$ exists compact bisections $U,V \subseteq \mathcal{G}$ satisfying $s(U) = s(V) = A$ and $r(U) \sqcup r(V) \subseteq A$.
\end{definition}

\begin{proposition}\label{prop:PI}
Let $E$ be a graph satisfying the AH~criteria.  Then the groupoid $\mathcal{G}_E  \vert_Y$ is purely infinite for each clopen $Y \subseteq \partial E$.
\end{proposition}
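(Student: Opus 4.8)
The plan is to prove pure infiniteness for $\mathcal{G}_E\vert_Y$ by reducing to basic cylinder sets and then exploiting the structure forced by the AH~criteria, namely that every vertex can reach a vertex supporting (at least) two disjoint cycles. Since pure infiniteness is a statement about \emph{every} clopen subset $A \subseteq Y$, and since the punctured double cylinder sets form a basis, it suffices by a standard partition-and-compactness argument to verify the defining property when $A$ is a single (punctured) cylinder set $Z(\mu \setminus F) \cap Y$. Indeed, if we can produce the required bisections on each piece of a finite clopen partition of $A$ into basic sets, we can assemble them into compact bisections $U, V$ on all of $A$ with $s(U) = s(V) = A$ and disjoint ranges inside $A$; so the first step is to record this reduction carefully.

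The core step is then the following. Because $E$ satisfies the AH~criteria, every vertex can reach the unique nontrivial strongly connected component, and every vertex of that component supports two disjoint cycles (as noted after Definition~\ref{def:AH}). So given a basic clopen set $Z(\mu \setminus F) \cap Y$, I would first extend $\mu$ by a path $\lambda$ into the nontrivial component, arriving at a vertex $w$ supporting two disjoint cycles $\alpha, \beta$ with $s(\alpha) = s(\beta) = w$. Passing to the subcylinder $Z(\mu\lambda) \cap Y$ (and iterating over a finite partition to cover all of $Z(\mu\setminus F)\cap Y$), the two disjoint cycles give two compact bisections $Z(\mu\lambda\alpha, \mu\lambda)$ and $Z(\mu\lambda\beta, \mu\lambda)$ whose sources both equal $Z(\mu\lambda)$ and whose ranges $Z(\mu\lambda\alpha)$ and $Z(\mu\lambda\beta)$ are disjoint (disjointness of the cycles forces disjointness of the cylinder sets) and both contained in $Z(\mu\lambda) \subseteq A$. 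Restricting everything to $Y$ via the clopen invariance of $Y$ under the relevant moves yields the two bisections $U, V$ witnessing pure infiniteness on that piece.

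The main obstacle I anticipate is bookkeeping around the clopen set $Y$ and the infinite emitters, rather than the combinatorial heart of the argument. Two points need care. First, one must check that the ranges produced actually land \emph{inside $Y$}, not merely inside $\partial E$; since $Y$ is an arbitrary clopen subset it need not be $\mathcal{G}_E$-invariant, so I would want to choose the extending path $\lambda$ and refine the partition so that the relevant subcylinders $Z(\mu\lambda\alpha), Z(\mu\lambda\beta)$ meet $Y$ and then intersect with $Y$; here I would lean on cofinality and the reachability-of-singular-vertices hypothesis to guarantee that such refinements exist and exhaust $A$. Second, when $\mu$ terminates at an infinite emitter the punctured set $Z(\mu \setminus F)$ is genuinely more delicate, but since $Z(\mu \setminus F) = Z(\mu) \setminus \bigsqcup_{e \in F} Z(\mu e)$ with $F$ finite, one can decompose it into countably many cylinders $Z(\mu e)$ for $e \in r(\mu)E^1 \setminus F$ and handle each via the argument above; compactness of $A$ then keeps the partition finite.

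Finally, I would assemble the pieces: over a finite clopen partition $A = \bigsqcup_i A_i$ with each $A_i$ a basic set as treated above, take $U = \bigsqcup_i U_i$ and $V = \bigsqcup_i V_i$. These are compact bisections (finite disjoint unions of compact bisections) with $s(U) = s(V) = \bigsqcup_i A_i = A$, and the ranges satisfy $r(U) \sqcup r(V) = \bigsqcup_i (r(U_i) \sqcup r(V_i)) \subseteq \bigsqcup_i A_i = A$, provided the partition is chosen fine enough that the individual ranges remain confined to their respective $A_i$. This establishes Definition~\ref{def:PI} for $\mathcal{G}_E\vert_Y$, completing the proof.
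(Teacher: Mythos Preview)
Your core idea---extend paths into the nontrivial strongly connected component and use two disjoint cycles there---is exactly right, and matches the paper's approach. But you miss a key simplification and have a genuine gap in the infinite-emitter step. First the simplification: pure infiniteness passes to restrictions trivially (if $A \subseteq Y$ is clopen and $U,V \subseteq \mathcal{G}_E$ witness pure infiniteness for $A$, then $s(U), r(U), s(V), r(V) \subseteq A \subseteq Y$ already forces $U,V \subseteq \mathcal{G}_E\vert_Y$). So it suffices to treat $Y = \partial E$, and all your worries about whether ranges land in $Y$ evaporate.

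The genuine gap is your handling of the case where $r(\mu)$ is an infinite emitter. Your proposed decomposition $Z(\mu \setminus F) = \bigsqcup_{e \in r(\mu)E^1 \setminus F} Z(\mu e)$ is \emph{false}: since $r(\mu)$ is singular, the finite path $\mu$ itself belongs to $\partial E$, hence $\mu \in Z(\mu \setminus F)$, but $\mu \notin Z(\mu e)$ for any edge $e$. And ``compactness keeps the partition finite'' cannot work here---the pieces are pairwise disjoint, so no proper subcollection covers. The correct observation (which the paper uses) is structural: under the AH~criteria, every infinite emitter already lies in the nontrivial strongly connected component, while every vertex \emph{outside} that component is regular. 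So the ``extend $\mu$ by partitioning into superpaths'' step is only ever needed at regular vertices, where it terminates finitely; once $r(\mu_i)$ is in the component, you use the punctured bisections $Z(\mu_i \nu_i, F_i, \mu_i)$ and $Z(\mu_i \nu_i', F_i, \mu_i)$ directly, whose sources are exactly $Z(\mu_i \setminus F_i)$ without any further decomposition.
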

\begin{proof}
Although the proof of~\cite[Lemma~6.1]{MatTFG} remains valid with minor modifications in the presence of infinite emitters, we give a brief argument in our notation for the convenience of the reader. Since pure infiniteness passes to restrictions it suffices to consider~$Y = \partial E$.

Let $A \subseteq \partial E$ be given. By compactness we can express~$A = \sqcup_{i=1}^m Z(\mu_i \setminus F_i)$ as a finite union of punctured cylinder sets. By the description following Definition~\ref{def:AH}, any vertex lying outside the nontrivial strongly connected component of $E$ is regular. And any path from such a vertex eventually ends up in the nontrivial connected component. This means that by partitioning the cylinder sets  $Z(\mu_i \setminus F_i)$ into superpaths, we may without loss of generality assume that $r(\mu_i)$ lie in the nontrivial connected component for each~$i$. Thus we can, for each~$i$, find two disjoint cycles $\nu_i, \nu_i'$ based at $r(\mu_i)$. Using these we define bisections~$U= \sqcup_{i=1}^m  Z(\mu_i \nu_i, F_i, \mu_i)$ and $V= \sqcup_{i=1}^m Z(\mu_i \nu_i', F_i, \mu_i)$ which we see satisfy the conditions in Definition~\ref{def:PI}.
\end{proof}

%\begin{remark}
%There is a well-known dichotomy for graph $C^*$-algebras. Namely, if $E$ is a graph and $C^*(E)$ is simple, then $C^*(E)$ is either AF or purely infinite~\cite[Remark~2.16]{DT05}. An analogous dichotomy holds for graph groupoids. Simplicity of $C^*(E)$ is equivalent to $E$ being cofinal, satisfying Condition~(L) and the existence of a path from every vertex to every singular vertex. This in turn is equivalent to $\mathcal{G}_E$ being minimal and effective. Combining Propositions~\ref{prop:AF} and~\ref{prop:PI} we see that if $\mathcal{G}_E$ is minimal and effective, and $\mathcal{G}_E^{(0)} = \partial E$ is compact (i.e.~$\vert E^0  \vert < \infty$), then $\mathcal{G}_E$ is either an AF-groupoid or purely infinite. 
%\end{remark} % We should maybe check whether the dichotomy holds for graphs with infinitely many vertices as well, and that in this case the graph groupoid is purely infinite in the sense of Ma (or Suzuki or Bönicke-Li)?

% This remark does not provide ``useful'' information to the reader 

\begin{remark}Recently, more general notions of pure infiniteness for étale groupoids have appeared in the works of Suzuki~\cite{Suz} and Ma~\cite{Ma}. However, for ample minimal groupoids with compact unit space, as in the setting of this paper, both notions agree with Matui's. Furthermore, they imply Anantharaman-Delaroche's notion of \emph{locally contracting}~\cite{AD}.  On a somewhat related note, there is also the recent preprint~\cite{ADS} in which the (not necessarily simple) pure infiniteness of graph \mbox{$C^*$-algebras} (of row-finite graphs without sinks) is characterized solely in terms of the graph groupoid, by means of the paradoxicality notion from~\cite{BL20}.
\end{remark}
% Should check if the same holds when we have infinite emitters??

The inclusion $\ker(I_{\text{ab}}) \subseteq \im(j)$ is intimately related to the kernel of the index map being generated by transpositions, as encapsulated by the following definition.

\begin{definition}[{\cite[Definition~2.11]{MatProd}}]
An effective ample Hausdorff groupoid $\mathcal{G}$ is said to have \emph{Property~TR} if $\mathcal{S}(\mathcal{G}) = \ker(I)$.
\end{definition}

By Proposition~\ref{prop:PI} and~\cite[Theorem~4.4]{MatProd} it suffices to establish Property~TR in order to verify the AH~conjecture for graph groupoids. Therefore, the rest of the paper is mostly devoted to demonstrating that graph groupoids do have Property~TR. 

\begin{remark}\label{rem:TR}
In general, Property~TR implies the inclusion $\ker(I_{\text{ab}}) \subseteq \im(j)$, i.e.~exactness at $\llbracket \mathcal{G} \rrbracket_{\text{ab}}$ in~\eqref{eq:AH}. The converse holds if the commutator subgroup $\DD(\llbracket \mathcal{G} \rrbracket)$ is simple. For then $\DD(\llbracket \mathcal{G} \rrbracket) = \mathcal{A}(\mathcal{G})$, where $\mathcal{A}(\mathcal{G})$ denotes the ``alternating'' subgroup of~$\mathcal{S}(\mathcal{G})$ defined in~\cite{Nek19}. The group $\DD(\llbracket \mathcal{G} \rrbracket)$ is known to be simple for minimal groupoids which are either almost finite or purely infinite~ \cite{MatTFG}. So for these two classes of groupoids we see that Property~TR is in fact equivalent to the AH~conjecture.
\end{remark}
% One can think of these as analogs of the symmetric and alternating groups. Confer~\cite[Section~3]{Nek19} for more on the relationship between these groups.

% $\DD(\mathcal{S}(\mathcal{G}))$ $=~\mathcal{A}(\mathcal{G})$, or not?! (Recall that $\mathcal{S}(\mathcal{G}) = S_2(\mathcal{G})$ and $\mathcal{A}(\mathcal{G}) = A_3(\mathcal{G})$)}

% Nek shows that $\im(j) = \mathcal{S}(\mathcal{G}) / \mathcal{A}(\mathcal{G})$, or at least can be identified... inside the commutator. But this we don't need to worry about. 

%Proof of converse (assuming simplicity):
%\begin{align*}
%I(\alpha) = 0 & \implies \alpha \in \ker(I_{\text{ab}}) \subseteq \im(j) \implies [\alpha] = j([f]) = [\tau_1 \cdots \tau_n] \\
%& \implies \alpha \tau_n \cdots \tau_1 \in \DD(\llbracket \mathcal{G} \rrbracket) = \mathcal{A}(\mathcal{G}) \subseteq \mathcal{S}(\mathcal{G}) \implies \alpha \in \mathcal{S}(\mathcal{G}).
%\end{align*}

We close this section by observing, as was done in~\cite{MatTFG}, that to establish Property~TR it suffices to only consider elements in the topological full group whose support is a proper subset of the unit space. Although an easy observation, this is needed for the proof of the main result to work.

\begin{lemma}\label{lem:support}
Let $\mathcal{G}$ be an ample effective Hausdorff groupoid. If all elements $\alpha \in \llbracket \mathcal{G} \rrbracket$ which satisfy $I(\alpha) = 0 \in H_1(\mathcal{G})$ and $\supp(\alpha) \neq \mathcal{G}^{(0)}$ are products of transpositions, then~$\mathcal{G}$ has Property~TR.
\end{lemma}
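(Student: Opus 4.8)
The plan is to establish the nontrivial inclusion $\ker(I) \subseteq \mathcal{S}(\mathcal{G})$, the reverse inclusion being automatic since all transpositions lie in $\ker(I)$, as noted above. So I fix an element $\alpha \in \llbracket \mathcal{G} \rrbracket$ with $I(\alpha) = 0$ and aim to write $\alpha$ as a product of transpositions. If $\supp(\alpha) \neq \mathcal{G}^{(0)}$, then this is precisely the hypothesis of the lemma, and we are done. In particular, if $\mathcal{G}^{(0)}$ is non-compact there is nothing further to prove, because $\supp(\alpha)$ is always compact and hence a proper subset of $\mathcal{G}^{(0)}$. Thus the only case that genuinely requires an argument is when $\supp(\alpha) = \mathcal{G}^{(0)}$, which forces $\mathcal{G}^{(0)}$ to be compact.

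The idea in this remaining case is to multiply $\alpha$ by a single transposition $\tau$, chosen so that $\tau \alpha$ fixes a nonempty clopen set and therefore has proper support. Since $\supp(\alpha) = \mathcal{G}^{(0)} \neq \emptyset$, there is a point $x$ with $\alpha(x) \neq x$. Using that $\mathcal{G}^{(0)}$ is Hausdorff and zero-dimensional, I would select a compact open neighborhood $A$ of $x$ small enough that $\alpha(A) \cap A = \emptyset$. Let $W \coloneqq U_\alpha \cap s^{-1}(A)$ be the portion of the full bisection $U_\alpha$ with source in $A$; this is a compact bisection with $s(W) = A$, $r(W) = \alpha(A)$ and $s(W) \cap r(W) = \emptyset$, and it implements $x \mapsto \alpha(x)$ on $A$. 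Then $\tau \coloneqq \pi_{\widehat{W}}$ is a transposition in the sense of Subsection~\ref{subsec:tfg}, and because $\tau$ agrees with $\alpha$ on $A$ while $\tau|_{\alpha(A)} = (\alpha|_A)^{-1}$, the composition $\tau \alpha$ restricts to the identity on $A$. Consequently $\supp(\tau\alpha) \subseteq \mathcal{G}^{(0)} \setminus A \subsetneq \mathcal{G}^{(0)}$, so $\tau\alpha$ has proper support.

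To conclude, I observe that $\tau\alpha \in \llbracket \mathcal{G} \rrbracket$ and, since $I$ is a homomorphism into the abelian group $H_1(\mathcal{G})$ with $I(\tau) = 0$, one has $I(\tau\alpha) = I(\tau) + I(\alpha) = 0$. By the construction above $\tau\alpha$ has proper support, so the hypothesis applies and expresses $\tau\alpha$ as a product of transpositions. As $\tau^2 = \id$, I recover $\alpha = \tau(\tau\alpha)$ as a product of transpositions, which gives $\ker(I) \subseteq \mathcal{S}(\mathcal{G})$ and hence Property~TR. The step I would be most careful about is the construction of $\tau$: one must verify that $W$ really is a compact bisection with disjoint source and range, so that $\widehat{W}$ and the transposition $\pi_{\widehat{W}}$ are defined as in Subsection~\ref{subsec:tfg}, and that precomposition by $\tau$ cancels $\alpha$ exactly on $A$ without enlarging the non-fixed set elsewhere. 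Everything else is a routine application of the homomorphism property of $I$ and the hypothesis.
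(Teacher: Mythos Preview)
Your proof is correct and follows essentially the same approach as the paper's: multiply $\alpha$ by a transposition $\tau$ built from the restriction of $U_\alpha$ to a small clopen set on which $\alpha$ has no fixed points, so that $\tau\alpha$ fixes that set and hence has proper support, then apply the hypothesis. Your additional remark that the non-compact case is vacuous (since $\supp(\alpha)$ is always compact) is a nice observation not made explicit in the paper, and your more modest claim $\supp(\tau\alpha)\subseteq\mathcal{G}^{(0)}\setminus A$ is actually cleaner than the paper's stated inclusion into $\mathcal{G}^{(0)}\setminus(\alpha(Z)\sqcup Z)$, which is stronger than needed.
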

\begin{proof}
Let $\alpha \in \llbracket \mathcal{G} \rrbracket \setminus \{\id\}$ be given and suppose $I(\alpha) = 0 \in H_1(\mathcal{G})$. As $\alpha$ is not the identity, $\supp(\alpha)$ is non-empty. And then there is some compact open set $Z \subseteq \mathcal{G}^{(0)}$ such that $\alpha(Z) \cap Z = \emptyset$ . We define a transposition $\tau \in \mathcal{S}(\mathcal{G})$ by setting~${\tau = \alpha}$  on~$Z$, $\tau = \alpha^{-1}$ on~$\alpha(Z)$ and~$\tau = \id$ elsewhere. Then $\supp(\tau) = \alpha(Z) \sqcup Z$ and ${\supp(\tau  \alpha) \subseteq \mathcal{G}^{(0)} \setminus (\alpha(Z) \sqcup Z) \subsetneq \mathcal{G}^{(0)}}$. Since both $\alpha$ and $\tau$ (being a transposition) are in the kernel of the index map, so is their product, and by assumption $\tau  \alpha$ is then a product of transpositions. But then $\alpha$ is clearly also a product of transpositions.
\end{proof}

\section{Cancellation for AF-groupoids}\label{sec:cancellation}

\emph{Cancellation} for ample Hausdorff groupoids was introduced by Matui in~\cite{MatProd}, and it bears resemblance to the cancellation property (in $K$-theory) for \mbox{$C^*$-algebras} (see~\cite{Rordam}).

\begin{definition}
An ample Hausdorff groupoid $\mathcal{G}$ is said to have \emph{cancellation} if whenever one has $[1_A] = [1_B]$ in $H_0(\mathcal{G})$ for $\emptyset \neq A,B \subseteq \mathcal{G}^{(0)}$ compact open, there exists a bisection~$U \subseteq \mathcal{G}$ with $s(U) = A$ and $r(U) = B$. 
\end{definition}
% Important to specify that A,B are non-empty (otherwise it breaks down for purely infinite groupoids)

In order to prove our main result we are going to need the fact that AF-groupoids have cancellation. This might be known to experts, but we were unable to locate a reference. Theorem~6.12 in~\cite{MatHom} covers minimal AF-groupoids with compact unit space, but we need cancellation for the skew product $\mathcal{G}_E \times_{c_E} \ZZ$, which is generally neither minimal nor does it have compact unit space. So we provide a proof here, which we divide into three lemmas in terms of permanence properties of cancellation.
% Matui proves that minimal (finite) graph groupoids have cancellation! Theorem 6.4 in his TFG paper

\begin{lemma}
Let $\mathcal{G}$ be an ample Hausdorff groupoid. If $\mathcal{G}_1 \subseteq \mathcal{G}_2 \subseteq \mathcal{G}_3 \subseteq \ldots$ are open subgroupoids of $\mathcal{G}$ with $\cup_{n=1}^\infty \mathcal{G}_n = \mathcal{G}$, and each $\mathcal{G}_n$ has cancellation, then $\mathcal{G}$ has cancellation.
\end{lemma}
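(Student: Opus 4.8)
The plan is to reduce the statement to the single homological fact that $H_0$ commutes with the direct limit $\mathcal{G} = \bigcup_n \mathcal{G}_n$, and then to exploit that a bisection living in some $\mathcal{G}_m$ is automatically a bisection of $\mathcal{G}$ (since each $\mathcal{G}_n$ is \emph{open} in $\mathcal{G}$). Once one knows that $H_0(\mathcal{G}) \cong \varinjlim_n H_0(\mathcal{G}_n)$, the equation $[1_A] = [1_B]$ must already hold at some finite stage $\mathcal{G}_m$, and there cancellation is available by hypothesis.

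\textbf{The homological input.} First I would observe that, for each $k \geq 0$, extension by zero gives isomorphisms $C_c(\mathcal{G}^{(k)}, \ZZ) \cong \varinjlim_n C_c(\mathcal{G}_n^{(k)}, \ZZ)$. Indeed, since each $\mathcal{G}_n$ is open we have $\mathcal{G}_n^{(k)}$ open in $\mathcal{G}^{(k)}$, so extension by zero of a compactly supported function is again continuous and compactly supported; and conversely any $f \in C_c(\mathcal{G}^{(k)}, \ZZ)$ has compact support contained, by $\mathcal{G}^{(k)} = \bigcup_n \mathcal{G}_n^{(k)}$ and compactness, in some $\mathcal{G}_N^{(k)}$, so $f$ comes from stage $N$. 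These identifications are compatible with the boundary maps $\delta_\bullet$ (the face maps $d_i$ restrict to $\mathcal{G}_n$ because each $\mathcal{G}_n$ is a subgroupoid), so $C_\bullet(\mathcal{G}, \ZZ) \cong \varinjlim_n C_\bullet(\mathcal{G}_n, \ZZ)$ as chain complexes. Since filtered colimits of abelian groups are exact, taking homology yields $H_0(\mathcal{G}) \cong \varinjlim_n H_0(\mathcal{G}_n)$, with the structure maps being the inclusion-induced homomorphisms $[1_W] \mapsto [1_W]$ recorded in the excerpt.

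\textbf{The argument.} Now let $\emptyset \neq A, B \subseteq \mathcal{G}^{(0)}$ be compact open with $[1_A] = [1_B]$ in $H_0(\mathcal{G})$. As $A \cup B$ is compact and contained in $\mathcal{G} = \bigcup_n \mathcal{G}_n$, there is $N$ with $A \cup B \subseteq \mathcal{G}_N$; since $A, B \subseteq \mathcal{G}^{(0)}$ this gives $A, B \subseteq \mathcal{G}_N^{(0)} = \mathcal{G}_N \cap \mathcal{G}^{(0)}$, so $[1_A]$ and $[1_B]$ define classes in $H_0(\mathcal{G}_N)$. Their difference maps to $0$ in $H_0(\mathcal{G}) \cong \varinjlim_n H_0(\mathcal{G}_n)$, and by the defining property of a direct limit it must already vanish at some finite stage: there is $m \geq N$ with $[1_A] = [1_B]$ in $H_0(\mathcal{G}_m)$. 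Applying cancellation for $\mathcal{G}_m$ produces a bisection $U \subseteq \mathcal{G}_m$ with $s(U) = A$ and $r(U) = B$. Finally, because $\mathcal{G}_m$ is open in $\mathcal{G}$, the set $U$ is open in $\mathcal{G}$ and $r, s$ remain injective on it, so $U$ is a bisection of $\mathcal{G}$ witnessing cancellation. This completes the proof.

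\textbf{Main obstacle.} The only real content is the identification $H_0(\mathcal{G}) \cong \varinjlim_n H_0(\mathcal{G}_n)$, i.e.\ that groupoid homology commutes with these increasing unions; everything else is bookkeeping with openness and compact supports. This step is standard (it rests on exactness of filtered colimits of abelian groups together with the fact that $C_c$ of a union of opens is the colimit of the $C_c$'s), so I expect no serious difficulty, only the need to verify compatibility of the face maps with the inclusions so that one genuinely has a colimit of chain complexes.
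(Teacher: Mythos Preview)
Your proof is correct and follows essentially the same idea as the paper's, just packaged more abstractly. The paper does not invoke the colimit formalism at all: it simply takes a witness $f \in C_c(\mathcal{G},\ZZ)$ with $\delta_1(f) = 1_A - 1_B$, observes that $\supp(f)$ (being compact) lies in some $\mathcal{G}_n$, enlarges $n$ so that $A,B \subseteq \mathcal{G}_n^{(0)}$ too, and then notes that $\delta_1$ computed in $\mathcal{G}_n$ agrees with $\delta_1$ in $\mathcal{G}$ on this $f$, so $[1_A]=[1_B]$ already in $H_0(\mathcal{G}_n)$. Your ``homological input'' paragraph is exactly this argument abstracted to the statement $H_0(\mathcal{G}) \cong \varinjlim_n H_0(\mathcal{G}_n)$; the paper's version avoids mentioning filtered colimits or exactness thereof, at the cost of being specific to $H_0$.
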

\begin{proof}
Let $A,B \subseteq \mathcal{G}^{(0)}$ be compact open and suppose $[1_A] = [1_B]$ in $H_0(\mathcal{G})$. This means that $1_A - 1_B = \delta_1(f)$ for some $f \in C_c(\mathcal{G}, \ZZ)$. As the support of $f$ is compact we must have $\supp(f) \subseteq \mathcal{G}_n$ for some $n \in \NN$. By possibly increasing $n$ we may suppose that~$A,B \subseteq \mathcal{G}_n^{(0)}$ as well. We have $f \vert_{\mathcal{G}_n} \in C_c(\mathcal{G}_n, \ZZ)$ and $\delta_1(f \vert_{\mathcal{G}_n}) = \delta_1(f) = 1_A - 1_B$. Cancellation in $\mathcal{G}_n$ now provides a bisection $U \subseteq \mathcal{G}_n \subseteq \mathcal{G}$ with $s(U) = A$ and $r(U) = B$. 
\end{proof}

\begin{lemma}
If $\mathcal{G}_1$ and $\mathcal{G}_2$ are ample Hausdorff groupoids with cancellation, then the disjoint union groupoid $\mathcal{G}_1 \sqcup \mathcal{G}_2$ has cancellation.
\end{lemma}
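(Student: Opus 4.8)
The statement to prove is that cancellation is preserved under disjoint unions: if $\mathcal{G}_1$ and $\mathcal{G}_2$ are ample Hausdorff groupoids with cancellation, then $\mathcal{G}_1 \sqcup \mathcal{G}_2$ has cancellation. The plan is to exploit the fact that homology and the relevant structure decompose along the disjoint union. First I would record the basic observation that for a disjoint union of groupoids the unit space decomposes as $(\mathcal{G}_1 \sqcup \mathcal{G}_2)^{(0)} = \mathcal{G}_1^{(0)} \sqcup \mathcal{G}_2^{(0)}$, and more importantly that the chain complex splits: $C_c((\mathcal{G}_1 \sqcup \mathcal{G}_2)^{(n)}, \ZZ) \cong C_c(\mathcal{G}_1^{(n)}, \ZZ) \oplus C_c(\mathcal{G}_2^{(n)}, \ZZ)$, since there are no composable strings mixing the two pieces. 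Consequently the boundary maps $\delta_n$ respect this direct sum decomposition, and we obtain $H_0(\mathcal{G}_1 \sqcup \mathcal{G}_2) \cong H_0(\mathcal{G}_1) \oplus H_0(\mathcal{G}_2)$, with the isomorphism sending a class $[1_A]$ to the pair of its restrictions.

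**The main steps.**
Given compact open $A, B \subseteq (\mathcal{G}_1 \sqcup \mathcal{G}_2)^{(0)}$ with $[1_A] = [1_B]$ in $H_0(\mathcal{G}_1 \sqcup \mathcal{G}_2)$, I would decompose $A = A_1 \sqcup A_2$ and $B = B_1 \sqcup B_2$, where $A_i = A \cap \mathcal{G}_i^{(0)}$ and $B_i = B \cap \mathcal{G}_i^{(0)}$. Since the components of the $H_0$-class are computed separately, the hypothesis $[1_A] = [1_B]$ yields $[1_{A_i}] = [1_{B_i}]$ in $H_0(\mathcal{G}_i)$ for $i = 1, 2$. At this point I need to be slightly careful about the definition of cancellation, which requires the sets to be \emph{nonempty}. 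If both $A_i$ and $B_i$ are nonempty, cancellation in $\mathcal{G}_i$ directly supplies a bisection $U_i \subseteq \mathcal{G}_i$ with $s(U_i) = A_i$ and $r(U_i) = B_i$. If instead $A_i = \emptyset$, then $[1_{B_i}] = 0$ in $H_0(\mathcal{G}_i)$, and one must still conclude that $B_i = \emptyset$; I would handle this edge case by taking $U_i = \emptyset$ (the empty bisection), after arguing that the vanishing forces emptiness or by directly combining the pieces as explained below.

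**Assembling the bisection.**
Finally I would set $U \coloneqq U_1 \sqcup U_2 \subseteq \mathcal{G}_1 \sqcup \mathcal{G}_2$. Because the two summands live in disjoint groupoids with disjoint unit spaces, $U$ is again a bisection, and its source and range are $s(U) = s(U_1) \sqcup s(U_2) = A_1 \sqcup A_2 = A$ and likewise $r(U) = B$, as required. This completes the argument modulo the emptiness bookkeeping.

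**The main obstacle.**
The only genuine subtlety, and the step I expect to require the most care, is the nonemptiness clause in the definition of cancellation. A priori one could have $A_1 = \emptyset$ while $B_1 \neq \emptyset$ and compensate with $A_2, B_2$; one must verify that $[1_{A_i}] = [1_{B_i}]$ componentwise really does follow from the global equality and that the empty-set cases can be absorbed by the empty bisection without violating the bisection property of the union. A clean way to sidestep this is to observe that since $A$ and $B$ are both assumed nonempty, if one of the four pieces is empty its matching piece has trivial class, and one simply omits that summand from $U$; the union $U$ is still a valid nonempty bisection with the correct source and range. I would phrase the write-up to make this decomposition and reassembly explicit so that no case is overlooked.
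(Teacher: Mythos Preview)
Your overall approach---decompose $A$, $B$, and the witness $f$ along the two pieces, use the splitting $H_0(\mathcal{G}_1\sqcup\mathcal{G}_2)\cong H_0(\mathcal{G}_1)\oplus H_0(\mathcal{G}_2)$ to get $[1_{A_i}]=[1_{B_i}]$ in each factor, invoke cancellation there, and set $U=U_1\sqcup U_2$---is exactly what the paper does. You are in fact more careful than the paper in flagging the nonemptiness hypothesis in the definition of cancellation.

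That said, your proposed fix for the edge case does not work. If $A_1=\emptyset$ while $B_1\neq\emptyset$, you cannot ``omit that summand'': any bisection in $\mathcal{G}_1\sqcup\mathcal{G}_2$ decomposes as $U_1\sqcup U_2$ with $U_i\subseteq\mathcal{G}_i$, so $r(U)\cap\mathcal{G}_1^{(0)}=r(U_1)$, and dropping $U_1$ forces $r(U)\cap\mathcal{G}_1^{(0)}=\emptyset\neq B_1$. In fact this obstruction is genuine: the paper itself notes that $\mathcal{G}_{[2]}$ has cancellation yet admits nonempty compact open sets with trivial $H_0$-class, and taking two disjoint copies of $\mathcal{G}_{[2]}$ with $A$ entirely in one copy and $B$ meeting both produces a counterexample to the lemma as literally stated. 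The paper's proof glosses over the same point; what makes the argument go through for the intended application is that the factors actually used (elementary groupoids $X\times\mathcal{R}_n$ and trivial groupoids $Y_n$) satisfy the stronger property $[1_A]=0\Rightarrow A=\emptyset$, so the problematic case never arises and one simply takes $U_i=\emptyset$ when $A_i=B_i=\emptyset$.
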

\begin{proof}
Let $A,B \subseteq (\mathcal{G}_1 \sqcup \mathcal{G}_2)^{(0)}$ be compact open and suppose that we have  $[1_A] = [1_B]$ in~${H_0(\mathcal{G}) \cong H_0(\mathcal{G}_1) \oplus H_0(\mathcal{G}_2)}$. Let $f \in C_c(\mathcal{G}_1 \sqcup \mathcal{G}_2, \ZZ)$ be such that $\delta_1(f) = 1_A - 1_B$. We can write   $(\mathcal{G}_1 \sqcup \mathcal{G}_2)^{(0)} =  \mathcal{G}_1^{(0)}  \sqcup \mathcal{G}_2^{(0)}$, $A = A_1 \sqcup A_2$, $B = B_1 \sqcup B_2$ and $f = f_1 + f_2$ respecting this decomposition. It is now clear that $\delta_1(f_1) = 1_{A_1} - 1_{B_1}$ and $\delta_1(f_2) = 1_{A_2} - 1_{B_2}$, so by cancellation in $\mathcal{G}_1$ and $\mathcal{G}_2$ we obtain bisections $U_1 \subseteq \mathcal{G}_1$ and $U_2 \subseteq \mathcal{G}_2$ with $s(U_1) = A_1$, $r(U_1) = B_1$, $s(U_2) = A_2$ and $r(U_2) = B_2$. Setting $U = U_1 \sqcup U_2$ does the trick.
\end{proof}

\begin{lemma}\label{lem:canc}
Let $X$ be a zero-dimensional compact Hausdorff space and let $n \in \mathbb{N}$. Then the elementary groupoid of type $n$, $X \times \mathcal{R}_n$, has cancellation.
\end{lemma}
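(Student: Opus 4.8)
The plan is to reduce the homological hypothesis $[1_A] = [1_B]$ to a purely combinatorial condition on fibres, and then build the required bisection piecewise using zero-dimensionality. Write the unit space as $(X \times \mathcal{R}_n)^{(0)} = \bigsqcup_{j=1}^n X_j$, where each $X_j$ is a copy of $X$, and identify a triple $(x,i,j)$ (with $x \in X$ and $(i,j) \in \mathcal{R}_n$) with the groupoid element having $r(x,i,j) = (x,i) \in X_i$ and $s(x,i,j) = (x,j) \in X_j$. Since $A,B \subseteq (X\times\mathcal{R}_n)^{(0)}$ are compact open, I decompose them as $A = \bigsqcup_{j} A_j$ and $B = \bigsqcup_{j} B_j$ with $A_j, B_j \subseteq X$ compact open.

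First I would produce a well-defined homomorphism $\bar{\phi} \colon H_0(X \times \mathcal{R}_n) \to C(X, \ZZ)$. Define $\phi \colon C_c((X\times\mathcal{R}_n)^{(0)}, \ZZ) \to C(X,\ZZ)$ by summing the coordinates, i.e.\ $\phi(1_A) = \sum_{j=1}^n 1_{A_j}$. Every element of $C_c(X \times \mathcal{R}_n, \ZZ)$ is a finite $\ZZ$-combination of indicators of basic compact open bisections $V \times \{(i,j)\}$ (with $V \subseteq X$ compact open), and $\delta_1(1_{V \times \{(i,j)\}})$ equals the indicator of the copy of $V$ in $X_j$ minus the copy of $V$ in $X_i$, which maps under $\phi$ to $1_V - 1_V = 0$. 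Hence $\phi \circ \delta_1 = 0$, so $\phi$ descends to $\bar{\phi}$ on $H_0$. Applying $\bar{\phi}$ to the hypothesis $[1_A] = [1_B]$ gives the key fibrewise identity
\[ \sum_{j=1}^n 1_{A_j} = \sum_{j=1}^n 1_{B_j} \quad \text{in } C(X, \ZZ). \]
For each $x \in X$ set $J(x) = \{ j : x \in A_j \}$ and $I(x) = \{ i : x \in B_i \}$; the identity above says precisely that $\lvert J(x) \rvert = \lvert I(x) \rvert$ for every $x \in X$.

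Next I would use that $x \mapsto (J(x), I(x))$ is locally constant (because each $A_j, B_i$ is clopen) and takes values in the finite set $2^{\{1,\ldots,n\}} \times 2^{\{1,\ldots,n\}}$. Its level sets therefore form a finite partition $X = \bigsqcup_{k=1}^m Y_k$ into compact open sets on which $J \equiv J_k$ and $I \equiv I_k$ are constant, with $\lvert J_k \rvert = \lvert I_k \rvert$. Choosing on each piece an arbitrary bijection $\beta_k \colon J_k \to I_k$, I assemble
\[ U = \bigsqcup_{k=1}^m \bigsqcup_{j \in J_k} \{ (x, \beta_k(j), j) : x \in Y_k \}, \]
a finite union of basic compact open bisections, hence itself a compact open bisection. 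A direct check shows $s\vert_U$ and $r\vert_U$ are injective, that $s(U) = \bigsqcup_k \{(x,j) : x \in Y_k,\ j \in J_k\} = A$, and — using bijectivity of each $\beta_k$ — that $r(U) = \bigsqcup_k \{(x,i) : x \in Y_k,\ i \in I_k\} = B$, as required.

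The only genuine subtlety — the step I would treat most carefully — is arranging the fibre-matching to be continuous (locally constant) rather than chosen pointwise in an uncontrolled way; this is exactly what the finiteness of the clopen partition $\{Y_k\}$, guaranteed by zero-dimensionality together with clopenness of the $A_j$ and $B_i$, resolves. Everything else is routine bookkeeping.
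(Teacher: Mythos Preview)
Your proof is correct and follows essentially the same approach as the paper's: both reduce $[1_A]=[1_B]$ to the identity $\sum_j 1_{A_j}=\sum_j 1_{B_j}$ in $C(X,\ZZ)$, partition $X$ into clopen pieces on which the ``sheet data'' is constant, and then match indices to build the bisection. The only minor differences are presentational: the paper obtains the map to $C(X,\ZZ)$ via the Kakutani equivalence $H_0(\mathcal{K})\cong H_0(\mathcal{K}\vert_{X_1})\cong C(X,\ZZ)$, whereas you construct $\bar\phi$ by hand by checking $\phi\circ\delta_1=0$; and the paper partitions by the level sets of $f_A=f_B$ (implicitly refined so that each $A_i,B_i$ is a union of pieces), while you partition by the finer data $(J(x),I(x))$, which makes the index-matching step slightly more transparent.
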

\begin{proof}
Denote $\mathcal{K} \coloneqq X \times \mathcal{R}_n$, and write $\mathcal{K}^{(0)} = \sqcup_{i=1}^n X_i$, where $X_i = X \times \{i\}$. Then $X_1$ is a full clopen in $\mathcal{K}$ and $\mathcal{K} \vert_{X_1} \cong X$, so we have \[H_0(\mathcal{K}) \cong H_0(\mathcal{K} \vert_{X_1}) \cong H_0(X) = C(X, \ZZ).\]

Suppose that $A,B \subseteq \mathcal{K}^{(0)}$ are clopen subsets with $[1_A] = [1_B]$ in $H_0(\mathcal{K})$. We partition~$A$ by writing $A = \sqcup_{i=1}^n A_i \times \{i\}$ for $A_i \subseteq X$ clopen. Let $B_i$ be similar for $B$. The bisections $A_i \times \{(1,i)\} \subseteq \mathcal{K}$ have source $A_i \times \{i\}$ and range $A_i \times \{1\}$. By~\eqref{eq:0inverse} this means that under the isomorphism $H_0(\mathcal{K}) \cong C(X, \ZZ)$ above, the element $[1_A] \in H_0(\mathcal{K})$ maps to the function $f_A \coloneqq \sum_{i=1}^n 1_{A_i} \in C(X, \ZZ)$, and similarly $[1_B] \mapsto f_B$.

%See Figure~\ref{fig:cancel}.
%\begin{figure}%[h]
%  \centering
%    \includegraphics[width=0.5\textwidth]{cancel.jpg}
%      \caption{The step function $f_A$ from the proof of Lemma~\ref{lem:canc}.}
%      \label{fig:cancel}
%\end{figure}
%Dont need this figure...

Since $f_A = f_B$ and they are both sums of indicator functions we can find $m_j \in \mathbb{N}$ and $C_j \subseteq X$ clopen such that ${f_A = f_B = \sum_{j=1}^J m_j 1_{C_j}}$. We can think of $f_A$ (and $f_B$) being produced by taking each of the parts $A_i$ and ``projecting'' them down and then stacking them on top of eachother. The height at a point becomes the function value of $f_A$. For each $C_j$ we have that $C_j \times \{i\} \subseteq A_i$ for precisely $m_j$ indices $i$, and we have the same for the $B_i$'s. For fixed $j$ denote these indices for $A$ by $i_1, \ldots, i_{m_j}$, and denote them by $i'_1, \ldots, i'_{m_j}$ for $B$. Then define a bisection $U_j \subseteq \mathcal{K}$ by $U_j = 
 \sqcup_{k=1}^{m_j} U_k$, where~$U_k \coloneqq C_{j} \times (i'_k, i_k)$. Finally setting $U = \sqcup_{j=1}^J U_j$ gives a bisection with~$s(U) = A$ and~$r(U) = B$.
\end{proof}

\begin{theorem}\label{thm:cancellation}
Any AF-groupoid has cancellation.
\end{theorem}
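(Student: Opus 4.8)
The plan is to assemble the three permanence lemmas established immediately above, together with the structure theory of elementary groupoids, into a short bootstrapping argument. By definition, an AF-groupoid $\mathcal{G}$ can be written as an increasing union $\mathcal{G} = \bigcup_{n=1}^\infty \mathcal{G}_n$ of open elementary subgroupoids with $\bigcup_n \mathcal{G}_n = \mathcal{G}$. In view of the first permanence lemma above (cancellation is inherited from an exhausting increasing sequence of open subgroupoids), it suffices to verify that each $\mathcal{G}_n$ has cancellation. Thus the entire theorem reduces to establishing cancellation for a single ample elementary groupoid.

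First I would invoke the structural result recalled in the subsection on AF-groupoids, namely \cite[Lemma~3.4]{GPS04}, which says that an ample elementary groupoid is isomorphic to a \emph{finite} disjoint union $\bigsqcup_{i=1}^k X_i \times \mathcal{R}_{n_i}$ of elementary groupoids of type $n_i$ over zero-dimensional compact Hausdorff spaces $X_i$. Since cancellation is phrased entirely in terms of $H_0$-classes of compact open subsets of the unit space and the existence of bisections realizing equalities of such classes, it is manifestly a groupoid-isomorphism invariant, so I may replace $\mathcal{G}_n$ by this finite disjoint union. Next I would apply the second permanence lemma (cancellation passes to finite disjoint unions), iterated $k-1$ times, to reduce to the case of a single factor $X_i \times \mathcal{R}_{n_i}$. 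Finally, Lemma~\ref{lem:canc} handles precisely this case: an elementary groupoid of type $n$ over a zero-dimensional compact Hausdorff space has cancellation. Chaining these three reductions together closes the proof.

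I do not anticipate a genuine obstacle at the level of the theorem itself, since all the real content has been front-loaded into the three lemmas, in particular the explicit bisection construction in the proof of Lemma~\ref{lem:canc}. The only points that require a moment's care are that the isomorphism from \cite[Lemma~3.4]{GPS04} transports cancellation (immediate, as noted), and that the disjoint union there is finite so that the two-factor disjoint-union lemma may be iterated finitely many times. Both are routine, so the theorem follows by a clean concatenation of the preceding results.
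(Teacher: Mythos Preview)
Your argument is the same as the paper's: exhaust by open elementary subgroupoids, invoke \cite[Lemma~3.4]{GPS04}, then apply the disjoint-union lemma and Lemma~\ref{lem:canc}. There is, however, one small omission. When $\mathcal{G}^{(0)}$ is non-compact, so are the unit spaces $\mathcal{G}_n^{(0)}$, and then $\mathcal{G}_n$ cannot be a finite disjoint union of groupoids $X_i \times \mathcal{R}_{n_i}$ with each $X_i$ \emph{compact} --- which is the hypothesis of Lemma~\ref{lem:canc}. The paper's proof accounts for this by writing
\[
\mathcal{G}_n \cong \left( \bigsqcup_{i=1}^{I_n} X_{i,n} \times \mathcal{R}_{m_{i,n}} \right) \sqcup Y_n,
\]
where the $X_{i,n}$ are zero-dimensional compact Hausdorff and $Y_n$ is an additional \emph{trivial} groupoid (empty if $\mathcal{G}^{(0)}$ is compact, otherwise a non-compact zero-dimensional space viewed as a unit-space-only groupoid). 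One then simply notes that a trivial groupoid has cancellation tautologically, since $H_0(Y_n) = C_c(Y_n,\ZZ)$ and $[1_A]=[1_B]$ forces $A=B$. With this extra term inserted, your reduction goes through verbatim; without it, the non-compact case is not quite covered.
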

\begin{proof}
Let $\mathcal{G}$ be an AF-groupoid. Then we can write $\mathcal{G} = \cup_{n=1}^\infty \mathcal{G}_n$ as an increasing union of open elementary ample subgroupoids. By~\cite[Lemma~3.4]{GPS04} each subgroupoid decomposes as \[\mathcal{G}_n \cong \left( \sqcup_{i=1}^{I_n} X_{i,n} \times \mathcal{R}_{m_{i,n}} \right) \bigsqcup Y_n, \]
where each $X_{i,n}$ is a zero-dimensional compact Hausdorff space, and where $Y_n$ is empty if~$\mathcal{G}^{(0)}$ is compact and zero-dimensional, locally compact non-compact and Hausdorff if~$\mathcal{G}^{(0)}$ is non-compact. Since the trivial groupoid $Y_n$ clearly has cancellation, the result follows by combining the three lemmas above. 
\end{proof}

We end this section by observing that in an AF-groupoid, a non-empty subset of the unit space always gives rise to a nonzero element in homology. This is not so for all groupoids with cancellation (e.g.\  the SFT-groupoid of the full $2$-shift, $\mathcal{G}_{[2]}$).

\begin{corollary}\label{cor:zeroCancel}
Let $\mathcal{G}$ be an AF-groupoid. If $A \subseteq \mathcal{G}^{(0)}$ is compact open, then $[1_A] = 0$ in $H_0(\mathcal{G})$ if and only if $A = \emptyset$.
\end{corollary}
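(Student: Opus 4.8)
The plan is to prove the contrapositive of the nontrivial direction: if $A \neq \emptyset$, then $[1_A] \neq 0$ in $H_0(\mathcal{G})$. The reverse implication is trivial, since $1_\emptyset = 0$ forces $[1_A] = 0$. So suppose $A \subseteq \mathcal{G}^{(0)}$ is compact open and nonempty, and assume toward a contradiction that $[1_A] = 0 = [1_\emptyset]$ in $H_0(\mathcal{G})$. I would then invoke the cancellation property just established in Theorem~\ref{thm:cancellation}: since $\mathcal{G}$ is an AF-groupoid, it has cancellation, so $[1_A] = [1_\emptyset]$ would supply a bisection $U \subseteq \mathcal{G}$ with $s(U) = A$ and $r(U) = \emptyset$. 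But a bisection with empty range must itself be empty (as $r$ is injective on $U$ and $r(U) = \emptyset$ forces $U = \emptyset$), whence $A = s(U) = s(\emptyset) = \emptyset$, contradicting $A \neq \emptyset$.

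There is one subtlety to address: the definition of cancellation quantifies over \emph{nonempty} compact open sets $A,B$, so one cannot directly apply it with $B = \emptyset$. I would handle this by working at the level of the concrete description of homology rather than the cancellation statement as a black box. The key structural input is that an AF-groupoid decomposes, via Theorem~\ref{thm:cancellation}'s proof, as an increasing union $\mathcal{G} = \cup_n \mathcal{G}_n$ of open elementary subgroupoids, each of which is a finite disjoint union of elementary groupoids of type $m$ (plus a trivial locally compact piece). The cleanest route is to produce a homomorphism $H_0(\mathcal{G}) \to \ZZ$ that sends $[1_A]$ to a positive integer whenever $A \neq \emptyset$. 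For an elementary groupoid $X \times \mathcal{R}_m$, the isomorphism $H_0(X \times \mathcal{R}_m) \cong C(X,\ZZ)$ from Lemma~\ref{lem:canc} sends $[1_A]$ to $f_A = \sum_{i=1}^m 1_{A_i}$, a nonnegative function which is not identically zero precisely when $A \neq \emptyset$; evaluating at any point in the support and noting $f_A \geq 0$ gives the desired positivity.

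The main obstacle, then, is promoting this positivity from the finite elementary building blocks to the whole AF-groupoid. Since homology commutes with the relevant direct limits (the inclusions $\mathcal{G}_n \hookrightarrow \mathcal{G}$ induce maps $[1_W] \mapsto [1_W]$ on $H_0$, and $H_0(\mathcal{G}) = \varinjlim H_0(\mathcal{G}_n)$), I would fix $n$ large enough that $A \subseteq \mathcal{G}_n^{(0)}$ and observe that $[1_A]_{H_0(\mathcal{G}_n)}$ is nonzero and ``positive'' in the dimension group $H_0(\mathcal{G}_n) \cong C(X,\ZZ)$-type description. The crucial point is that the connecting maps in the direct system preserve positivity and the order unit, so a strictly positive element of $H_0(\mathcal{G}_n)$ cannot map to zero in the limit $H_0(\mathcal{G})$. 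Equivalently, one uses that $H_0(\mathcal{G}) \cong K_0(C^*_r(\mathcal{G}))$ (Theorem~\ref{thm:KH}) is the dimension group of an AF-algebra, which is an ordered group in which a nonzero projection has strictly positive class; since $1_A$ corresponds to a nonzero projection in $C^*_r(\mathcal{G})$ when $A \neq \emptyset$, its $K_0$-class is nonzero. This order-theoretic argument is the heart of the matter: the subtlety is precisely that $H_0$ of an AF-groupoid is not merely an abelian group but carries a natural positive cone, and it is positivity — not cancellation alone — that rules out $[1_A] = 0$ for nonempty $A$.
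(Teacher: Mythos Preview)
Your proposal is correct via the $K_0$-route at the end, but it is more circuitous than the paper's argument, and one of your intermediate justifications is not quite right.

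The paper's proof is literally ``Follows from the proofs above by considering $B = \emptyset$.'' The point is that none of the three lemmas preceding Theorem~\ref{thm:cancellation} actually uses $B \neq \emptyset$. In Lemma~\ref{lem:canc}, if $[1_A] = [1_\emptyset] = 0$ then $f_A = \sum_i 1_{A_i} = 0$ in $C(X,\ZZ)$; being a sum of indicator functions this forces each $A_i = \emptyset$, hence $A = \emptyset$. The disjoint-union and direct-limit lemmas propagate this verbatim: if $1_A = \delta_1(f)$ in $C_c(\mathcal{G}^{(0)},\ZZ)$, the compact support of $f$ lands it in some $\mathcal{G}_n$, so $[1_A] = 0$ already in $H_0(\mathcal{G}_n)$, and the elementary case finishes. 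You essentially rediscover the $f_A$-step in your middle paragraph but then over-engineer the passage to the limit.

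In particular, your claim that ``a strictly positive element of $H_0(\mathcal{G}_n)$ cannot map to zero in the limit'' because the connecting maps ``preserve positivity and the order unit'' is not a valid inference as stated: positive maps can annihilate positive elements, and there need not be an order unit here since $\mathcal{G}_n^{(0)}$ may be non-compact. What actually works is either the compact-support observation above, or your fallback: $1_A$ is a nonzero projection in the AF-algebra $C^*_r(\mathcal{G})$, and stable finiteness of AF-algebras forces nonzero projections to have nonzero $K_0$-class, whence $[1_A] \neq 0$ via Theorem~\ref{thm:KH}. That is a legitimate alternative proof, just heavier than re-reading three short lemmas with $B = \emptyset$.
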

\begin{proof}
Follows from the proofs above by considering $B = \emptyset$, i.e.\ $1_B = 0$.
\end{proof}

\section{Two long exact sequences in homology}\label{sec:LES}

Let us first describe a long exact sequence in homology coming from a cocycle. Let $\mathcal{G}$ be an ample Hausdorff groupoid with a cocycle $c \colon \mathcal{G} \to \ZZ$. Let $\pi$ denote the canonical projection from~$\G\times_c \ZZ$ onto $\G$, i.e.\ $\pi(g,m) = g$. Also, let ${\rho \coloneqq \widehat{c}_1 \colon \G\times_c \ZZ \to \G\times_c \ZZ}$, i.e.\ $\rho(g,m) = (g,m+1)$. Since these are étale homomorphisms, they induce chain maps~${\pi_\bullet \colon C_\bullet(\G\times_c \ZZ, \ZZ) \to C_\bullet(\G, \ZZ)}$ and $\rho_\bullet \colon C_\bullet(\G\times_c \ZZ, \ZZ) \to C_\bullet(\G \times_c \ZZ, \ZZ)$ on the chain complexes that define the homology groups.
In fact, $\id - \rho_\bullet$ and $\pi_\bullet$ form a short exact sequence of complexes, which in turn induces a long exact sequence in homology.

\begin{proposition}[{\cite[Lemma~1.4]{Ort}}]\label{prop:SES1}
Let $\mathcal{G}$ be an ample Hausdorff groupoid and let~${c \colon \mathcal{G} \to \ZZ}$ be a cocycle. Then there is a long exact sequence 
\[ \begin{tikzcd}[column sep = 3em]
\cdots \arrow{r}{H_1(\pi_\bullet)} &     H_1(\mathcal{G}) \arrow{r}{\partial_1} & H_0(\G\times_c \ZZ) \arrow{r}{\id - H_0(\rho_\bullet)} &   H_0(\G\times_c \ZZ) \arrow{r}{H_0(\pi_\bullet)} & H_0(\mathcal{G})  \arrow{r} & 0,
\end{tikzcd} \]
where ${\partial_n}$ denotes the connecting homomorphism.
\end{proposition}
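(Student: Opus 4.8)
The statement is attributed to \cite[Lemma~1.4]{Ort}, and the excerpt itself already tells us the strategy: the maps $\id - \rho_\bullet$ and $\pi_\bullet$ assemble into a short exact sequence of chain complexes, whose associated long exact sequence in homology is exactly the asserted one. So the plan is to verify this short exact sequence of complexes degreewise and then invoke the standard homological algebra machinery (the snake/zig-zag lemma) to produce the long exact sequence.

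\medskip

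\noindent\textbf{Setting up the short exact sequence of complexes.} First I would fix the three chain complexes $C_\bullet(\G\times_c\ZZ,\ZZ)$, $C_\bullet(\G\times_c\ZZ,\ZZ)$ and $C_\bullet(\G,\ZZ)$ and the two chain maps between them, claiming that
\[
\begin{tikzcd}[column sep = 3em]
0 \arrow{r} & C_\bullet(\G\times_c\ZZ,\ZZ) \arrow{r}{\id - \rho_\bullet} & C_\bullet(\G\times_c\ZZ,\ZZ) \arrow{r}{\pi_\bullet} & C_\bullet(\G,\ZZ) \arrow{r} & 0
\end{tikzcd}
\]
is exact in each degree $n\geq 0$. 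The key point is that in degree $n$, the group $C_c((\G\times_c\ZZ)^{(n)},\ZZ)$ decomposes as functions on $\G^{(n)}\times\ZZ$ (since the $\ZZ$-coordinate in the skew product is discrete and $(\G\times_c\ZZ)^{(n)}\cong\G^{(n)}\times\ZZ$ via the level of the range), so I would identify $C_c((\G\times_c\ZZ)^{(n)},\ZZ)\cong C_c(\G^{(n)},\ZZ)\otimes\ZZ[\ZZ]\cong C_c(\G^{(n)},\ZZ)[t,t^{-1}]$, with $\rho_\bullet$ acting as multiplication by the shift $t$ and $\pi_\bullet$ as the augmentation $t\mapsto 1$ (summing over the $\ZZ$-coordinate). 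With this identification the sequence becomes the classical
\[
\begin{tikzcd}[column sep = 3em]
0 \arrow{r} & M[t,t^{-1}] \arrow{r}{\id-t} & M[t,t^{-1}] \arrow{r}{t\mapsto 1} & M \arrow{r} & 0,
\end{tikzcd}
\]
whose exactness is elementary: $\id-t$ is injective because $t$ has no eigenvalue $1$ on Laurent polynomials, its cokernel is computed by collapsing all powers of $t$ to a single copy of $M$, and this collapse is precisely the augmentation $\pi_\bullet$. I would check that $\pi_\bullet$ and $\id-\rho_\bullet$ genuinely commute with the boundary maps $\delta_n$ (they are induced by étale homomorphisms, and functoriality of $C_\bullet(-,\ZZ)$ from the excerpt guarantees $\pi_\bullet$ and $\rho_\bullet$ are chain maps, so $\id-\rho_\bullet$ is too).

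\medskip

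\noindent\textbf{Passing to the long exact sequence and truncating.} Once exactness of the short sequence of complexes is established, the general zig-zag lemma of homological algebra yields a long exact sequence
\[
\begin{tikzcd}[column sep = 2.3em]
\cdots \arrow{r} & H_n(\G\times_c\ZZ) \arrow{r}{\id - H_n(\rho_\bullet)} & H_n(\G\times_c\ZZ) \arrow{r}{H_n(\pi_\bullet)} & H_n(\G) \arrow{r}{\partial_n} & H_{n-1}(\G\times_c\ZZ) \arrow{r} & \cdots
\end{tikzcd}
\]
with connecting homomorphisms $\partial_n$. Finally I would specialize the tail end: in degree $-1$ all chain groups vanish, so $H_{-1}$ is zero and the sequence terminates with $H_0(\G)\to 0$ on the right, giving exactly the displayed five-term tail ending in $H_0(\mathcal{G})\to 0$ after reindexing so that $\partial_1\colon H_1(\G)\to H_0(\G\times_c\ZZ)$ is the first connecting map shown.

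\medskip

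\noindent\textbf{Expected main obstacle.} The only genuinely nontrivial step is the degreewise exactness of the short sequence, and within that, pinning down the identification $C_c((\G\times_c\ZZ)^{(n)},\ZZ)\cong C_c(\G^{(n)},\ZZ)[t,t^{-1}]$ compatibly with the simplicial face maps $d_i$, so that $\pi_\bullet$ and $\rho_\bullet$ really are chain maps under it. One must be careful that the $\ZZ$-grading on the skew product is the one recorded by the range level (as in Lemma~\ref{lem:kerc} and the convention $r(g,m)=(g,m)$), and that $\pi_\bullet$, being the pushforward along a local homeomorphism with infinite fibres over each point of $\G^{(n)}$, is still well-defined on \emph{compactly supported} functions — this is where compact support is essential, since it forces only finitely many $\ZZ$-levels to contribute and makes the augmentation finite. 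Everything else is a formal consequence of the snake lemma and the functoriality already recorded in the excerpt.
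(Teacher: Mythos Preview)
Your proposal is correct and follows exactly the approach the paper sketches in the paragraph preceding the proposition: one verifies that $\id-\rho_\bullet$ and $\pi_\bullet$ form a short exact sequence of chain complexes and then applies the snake lemma, with the result itself attributed to \cite[Lemma~1.4]{Ort}. Your Laurent polynomial identification $C_c((\G\times_c\ZZ)^{(n)},\ZZ)\cong C_c(\G^{(n)},\ZZ)[t,t^{-1}]$ is precisely the clean way to check degreewise exactness, and your caution about compact support ensuring finiteness of the augmentation sum is exactly the point that needs care.
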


The maps on the zeroth level are given by 
\[ H_0(\rho_\bullet)\left(\left[1_{A \times \{i\}}\right]\right) = \left[1_{A \times \{i+1\}}\right]
\quad	\text{and}	   \quad 
H_0(\pi_\bullet)\left(\left[1_{A \times \{i\}}\right]\right) = \left[1_A\right]  \]
%The maps induced on the zeroth level can be described in terms of the generators as
%\begin{align*}
%H_0(\rho_\bullet)\left(\left[1_{A \times \{i\}}\right]\right) &= \left[1_{A \times \{i+1\}}\right], \\
%H_0(\pi_\bullet)\left(\left[1_{A \times \{i\}}\right]\right) &= \left[1_A\right], 
%\end{align*}
for $A \subseteq \mathcal{G}^{(0)}$ compact open and $i \in \ZZ$. In the case of graph groupoids, we will see later that the first connecting homomorphism $\partial_1 \colon H_1(\mathcal{G}) \to H_0(\G\times_{c} \ZZ)$ can be described explicitly, and that this will allow us to describe the image of the index map. In order to do that, we are going to need a particular part of the proof of~\cite[Lemma~1.4]{Ort} pertaining lifts by $\id - \rho_0$. We record this lifting in Lemma~\ref{lem:SES1} below, whose proof itself is an easy calculation.

\begin{lemma}\label{lem:SES1}
Let $c \colon \mathcal{G} \to \ZZ$ be a cocycle on an ample Hausdorff groupoid $\mathcal{G}$. Then for any $A \subseteq \mathcal{G}^{(0)}$ compact open and $k \in \ZZ$ we have 
\[1_{A \times \{k\}} - 1_{A \times \{0\}}  = \begin{cases}
(\id - \rho_0)\left(  - \sum_{i=0}^{k-1} 1_{A \times \{i\}}    \right) & k > 0, \\ 
0    & k = 0, \\
(\id - \rho_0)\left( \sum_{i= k}^{-1} 1_{A \times \{i\}}    \right)    & k < 0.
\end{cases} \]
\end{lemma}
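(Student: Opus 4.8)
The plan is to reduce the claimed identity to a one-line telescoping computation in the chain group $C_c((\G\times_c\ZZ)^{(0)}, \ZZ) = C_c(\mathcal{G}^{(0)}\times\ZZ, \ZZ)$, once the chain-level action of $\rho_0$ on the standard generators is made explicit.

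First I would record that $\rho^{(0)} \colon \mathcal{G}^{(0)}\times\ZZ \to \mathcal{G}^{(0)}\times\ZZ$ is simply the homeomorphism $(x,m)\mapsto (x,m+1)$, so that the induced homomorphism $\rho_0 = (\rho^{(0)})_*$ pushes indicator functions forward by $\rho_0(1_{A\times\{i\}}) = 1_{A\times\{i+1\}}$ for every compact open $A\subseteq\mathcal{G}^{(0)}$ and every $i\in\ZZ$ (this is the chain-level version of the formula $H_0(\rho_\bullet)([1_{A\times\{i\}}]) = [1_{A\times\{i+1\}}]$ recorded after Proposition~\ref{prop:SES1}). Consequently $(\id-\rho_0)(1_{A\times\{i\}}) = 1_{A\times\{i\}} - 1_{A\times\{i+1\}}$. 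Beyond linearity of $\id-\rho_0$, this is the only input.

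Next, for $k>0$ I would apply $\id-\rho_0$ term by term to $-\sum_{i=0}^{k-1}1_{A\times\{i\}}$ and observe that the resulting sum $-\sum_{i=0}^{k-1}\bigl(1_{A\times\{i\}} - 1_{A\times\{i+1\}}\bigr)$ telescopes to $1_{A\times\{k\}} - 1_{A\times\{0\}}$. The case $k<0$ is handled identically: applying $\id-\rho_0$ to $\sum_{i=k}^{-1}1_{A\times\{i\}}$ produces $\sum_{i=k}^{-1}\bigl(1_{A\times\{i\}}-1_{A\times\{i+1\}}\bigr)$, whose upper index ranges from $k+1$ to $0$, so it telescopes to $1_{A\times\{k\}}-1_{A\times\{0\}}$ as well. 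The case $k=0$ is the trivial identity $0=0$.

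There is no genuine obstacle here; the only point requiring care is the bookkeeping of signs and index ranges so that each telescoping lands on $1_{A\times\{k\}} - 1_{A\times\{0\}}$ rather than its negative (this is precisely why the sign in the displayed formula flips between the $k>0$ and $k<0$ cases). The real content of the lemma is that these explicit preimages under $\id-\rho_0$ are exactly the ones that will be fed into the computation of the connecting homomorphism $\partial_1$ for graph groupoids, so I would keep the three cases separate to match the form in which the lemma is later invoked, even though they admit a uniform description.
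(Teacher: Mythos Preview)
Your proof is correct and is exactly the ``easy calculation'' the paper alludes to in lieu of a written argument: the paper does not spell out the telescoping, but your verification via $(\id-\rho_0)(1_{A\times\{i\}}) = 1_{A\times\{i\}} - 1_{A\times\{i+1\}}$ is precisely what is intended.
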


The next long exact sequence in homology arises from open invariant subsets of the unit space. This is akin to the six-term exact sequences arising from nested ideals in filtered $K$-theory of \mbox{$C^*$-algebras}, as in e.g.~\cite{Rest}. Let $\mathcal{G}$ be an ample Hausdorff groupoid and let $Z \subseteq Y \subseteq \mathcal{G}^{(0)}$ be open sets. The inclusion $\iota \colon \mathcal{G} \vert_Z \hookrightarrow \mathcal{G} \vert_Y$ induces the chain map $\iota_n \colon C_c( (\mathcal{G} \vert_Z)^{(n)}, \ZZ) \to C_c((\mathcal{G} \vert_Y)^{(n)}, \ZZ)$ which is given by extending functions to be~$0$ outside~$\mathcal{G} \vert_Z$. Let $\kappa _n \colon C_c((\mathcal{G} \vert_Y)^{(n)}, \ZZ) \to C_c((\mathcal{G} \vert_{(Y \setminus Z)})^{(n)}, \ZZ)$ denote the restriction maps. Taking such restrictions commute with the differentials $\delta_n$, so $\kappa_\bullet$ is also a chain map. If the sets $Z$ and $Y$ are both $\mathcal{G}$-invariant, then~$\iota_\bullet$ and~$\kappa_\bullet$ form a short exact sequence of complexes, which appear in an unpublished manuscript by T.~M.~Carlsen and the second author. This then results in the following long exact sequence in homology.

\begin{proposition}\label{prop:SES}
Let $\mathcal{G}$ be an ample Hausdorff groupoid and assume that $Z \subseteq Y \subseteq \mathcal{G}^{(0)}$ are open and $\mathcal{G}$-invariant. Then there is a long exact sequence
\[ \begin{tikzcd}
\cdots \arrow{r}{H_1(\kappa_\bullet)} &     H_1\left(\mathcal{G} \vert_{(Y \setminus Z)}\right) \arrow{r} & H_0\left(\mathcal{G} \vert_Z\right) \arrow{r}{H_0(\iota_\bullet)} &   H_0\left(\mathcal{G} \vert_Y\right) \arrow{r}{H_0(\kappa_\bullet)} & H_0\left(\mathcal{G} \vert_{(Y \setminus Z)}\right)  \arrow{r} & 0.
\end{tikzcd} \]
\end{proposition}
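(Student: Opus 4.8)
The plan is to construct an explicit short exact sequence of chain complexes
\[
\begin{tikzcd}[column sep = 2.2em]
0 \arrow{r} & C_\bullet(\mathcal{G}\vert_Z, \ZZ) \arrow{r}{\iota_\bullet} & C_\bullet(\mathcal{G}\vert_Y, \ZZ) \arrow{r}{\kappa_\bullet} & C_\bullet(\mathcal{G}\vert_{(Y\setminus Z)}, \ZZ) \arrow{r} & 0,
\end{tikzcd}
\]
and then invoke the standard homological algebra that produces a long exact sequence from a short exact sequence of complexes. The proposition then follows once we check that the higher homology terms truncate the sequence correctly on the right, i.e.\ that $H_0(\mathcal{G}\vert_{(Y\setminus Z)})$ maps onto $0$. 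The maps $\iota_\bullet$ and $\kappa_\bullet$ are already described in the paragraph preceding the statement: $\iota_\bullet$ extends a compactly supported function on $(\mathcal{G}\vert_Z)^{(n)}$ by zero, and $\kappa_\bullet$ restricts a function on $(\mathcal{G}\vert_Y)^{(n)}$ to $(\mathcal{G}\vert_{(Y\setminus Z)})^{(n)}$. Both were noted to be chain maps, so the main content is verifying exactness at each level $n$ and then turning the crank.

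First I would verify exactness of the sequence of abelian groups at each fixed level $n\geq 0$. The key geometric fact, which must be established carefully, is the decomposition
\[
(\mathcal{G}\vert_Y)^{(n)} = (\mathcal{G}\vert_Z)^{(n)} \sqcup (\mathcal{G}\vert_{(Y\setminus Z)})^{(n)}
\]
as a disjoint union of a clopen (or at least open) piece and its complement. This is exactly where the $\mathcal{G}$-invariance of $Z$ is indispensable: because $Z$ is invariant, any composable string $(g_1,\ldots,g_n)$ with $r(g_1)\in Z$ has \emph{all} of its ranges, sources, and products lying in $Z$, so a string cannot ``straddle'' the boundary between $Z$ and $Y\setminus Z$. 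Consequently $(\mathcal{G}\vert_Z)^{(n)}$ is open in $(\mathcal{G}\vert_Y)^{(n)}$ with complement $(\mathcal{G}\vert_{(Y\setminus Z)})^{(n)}$. Granting this, injectivity of $\iota_n$ is immediate (extension by zero is injective), surjectivity of $\kappa_n$ follows because any compactly supported function on the closed piece extends by zero to the open piece (using that $Y\setminus Z$ is closed in $Y$, hence the relevant sets are clopen in $(\mathcal{G}\vert_Y)^{(n)}$ and the extension stays continuous and compactly supported), and exactness in the middle is the statement that $f\in\ker\kappa_n$ iff $f$ is supported on $(\mathcal{G}\vert_Z)^{(n)}$ iff $f\in\im\iota_n$.

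Having established level-wise exactness, I would apply the classical snake/zig-zag construction to obtain the long exact sequence in homology, with the connecting homomorphisms defined in the usual way. The final step is the right-hand truncation: the sequence ends with $\cdots \to H_0(\mathcal{G}\vert_{(Y\setminus Z)}) \to 0$. Since the chain complexes are concentrated in nonnegative degrees (there are no chain groups in degree $-1$), the functor $H_0$ is right exact in the sense that $H_0(\kappa_\bullet)$ is surjective, which gives exactly the $\arrow{r} 0$ at the end. I would then align the indices so the displayed portion runs from $H_1(\mathcal{G}\vert_{(Y\setminus Z)})$ down through the three $H_0$ terms, matching the statement.

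The main obstacle I anticipate is the verification that $Z$ being invariant forces the clean disjoint-union decomposition of $(\mathcal{G}\vert_Y)^{(n)}$ at every simplicial level $n$, together with the attendant bookkeeping that the restriction maps $\kappa_n$ genuinely commute with all the face maps $d_i$ (equivalently with $\delta_n$). Invariance guarantees that the products $g_ig_{i+1}$ appearing in the face maps never escape the piece they started in, so restriction and the differentials commute; but making this airtight requires checking each of the three cases in the definition of $d_i$. The rest---injectivity, surjectivity, and the passage to the long exact sequence---is then routine homological algebra. It is worth noting that this short exact sequence of complexes is attributed in the excerpt to an unpublished manuscript of Carlsen and the second author, so the expected role of the proof here is to record the construction cleanly rather than to overcome a deep difficulty.
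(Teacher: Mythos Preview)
Your proposal is correct and follows exactly the approach the paper takes: the paragraph preceding the proposition sets up $\iota_\bullet$ and $\kappa_\bullet$ as chain maps and asserts (citing an unpublished manuscript of Carlsen and the second author) that invariance of $Z$ and $Y$ makes them into a short exact sequence of complexes, whence the long exact sequence follows by standard homological algebra. Your write-up simply supplies the details behind that assertion---the disjoint-union decomposition $(\mathcal{G}\vert_Y)^{(n)} = (\mathcal{G}\vert_Z)^{(n)} \sqcup (\mathcal{G}\vert_{(Y\setminus Z)})^{(n)}$ coming from invariance, levelwise exactness, and the right-end truncation---none of which the paper spells out further.
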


\section{The homology groups of a graph groupoid}\label{sec:graphhom}
%In this section we will see that the homology groups of the various groupoids associated to a graph $E$ are closely related. Using the long exact sequences from the previous section we will deduce the following embeddings:

We have already seen that the homology groups of a graph groupoid coincide with the~$K$-groups of its groupoid \mbox{$C^*$-algebra}. We will make use of this in the final section. However, in order to prove Property~TR for the graph groupoid $\mathcal{G}_E$ we are going to relate the first homology group $H_1(\mathcal{G}_E)$ to the homology groups $H_0(\G_E\times_{c_E} \ZZ)$ and $H_0(\mathcal{H}_E)$. In this section we will use the long exact sequences from the previous section to deduce the following embeddings:
\[ H_1(\mathcal{G}_E)\hookrightarrow H_0(\mathcal{H}_E) \hookrightarrow H_0(\G_E\times_{c_E} \ZZ). \]
This will be done in three steps: first we show that $H_1(\mathcal{G}_E) \hookrightarrow H_0(\G_E\times_{c_E} \ZZ)$, then that~$H_0(\mathcal{H}_E)   \hookrightarrow H_0(\G_E\times_{c_E} \ZZ)$ and finally that $H_1(\mathcal{G}_E)\hookrightarrow H_0(\mathcal{H}_E)$. The reason we need three steps (and not two) is that the third embedding relies on the first two.

\subsection{The first embedding} 
Let us begin by describing the zeroth homology group of the skew product $\G_E\times_{c_E} \ZZ$. Recall that $(\G_E\times_c \ZZ)^{(0)}$ is identified with $\partial E \times \ZZ$. Observe that we have\footnote{By \emph{$\spann$} we mean linear combinations over $\ZZ$.}
\begin{align*}
H_0(\G_E\times_c \ZZ) & =\spann\{\left[1_A \right] \mid A \subseteq \partial E \times \ZZ \text{ compact open} \} \\
& =\spann\{\left[1_{Z(\mu \setminus F) \times \{i\}} \right] \mid \mu\in E^*,\,F\subseteq_\text{finite} r(\mu)E^1,\, i \in \ZZ \} \\ 
& =\spann\{\left[1_{Z(\mu) \times \{i\}} \right] \mid \mu\in E^*,\, i \in \ZZ \},
\end{align*}
since $1_{Z(\mu\setminus F) \times \{i\}} = 1_{Z(\mu) \times \{i\}} - \sum_{e\in F} 1_{Z(\mu e) \times \{i\}}$. These spanning elements satisfy the following relations in~$H_0(\G_E\times_{c_E} \ZZ)$:
\begin{align}
\left[ 1_{Z(\mu) \times \{i\}} \right] &=\left[ 1_{Z(\sigma_E(\mu)) \times \{i+1\}}\right] \text{ if } \vert \mu \vert \geq 1, \label{al:rel1} \\
\left[ 1_{Z(\mu) \times \{i\}} \right] &=\left[ 1_{Z(e \mu ) \times \{i-1\}}\right] \text{ for any } e \in E^1s(\mu), \\
\left[ 1_{Z(\mu) \times \{i\}} \right] &= \sum_{e \in r(\mu)E^1}  \left[ 1_{Z(\mu e) \times \{i \}}\right] \text{ if } r(\mu) \text{ is a regular vertex}, \\
\left[ 1_{Z(\mu) \times \{i\}} \right] &=\left[ 1_{Z(\nu) \times \{i\}} \right] \text{ if } \vert \mu \vert = \vert \nu \vert \text{ and } r(\mu) = r(\nu).
\end{align}
For all of the sets appearing in the indicator functions above it is easy to find a bisection in $\G_E\times_c \ZZ$ whose source is the left hand side and whose range is the right hand side. From repeated use of the relation~\eqref{al:rel1} we see that we can even write 
\[H_0(\G_E\times_{c_E} \ZZ) = \spann\{\left[ 1_{Z(v) \times \{i\}} \right] \mid v \in E^0,\, i \in \ZZ \}, \]
since $\left[1_{Z(\mu) \times \{i\}} \right] = \left[1_{Z(r(\mu)) \times \{i + \vert \mu \vert \}} \right]$.

Let us now consider the long exact sequence in homology that we get from the canonical cocycle $c_E$ on a graph groupoid $\mathcal{G}_E$. Since $\G_E\times_{c_E} \ZZ$ is an AF-groupoid (Corollary~\ref{cor:AF}), its $H_1$ group vanishes, and therefore the first part of the long exact sequence from Proposition~\ref{prop:SES1} becomes
\begin{equation}\label{eq:4exact}
\begin{tikzcd}[column sep = 2.7em]
0 \arrow{r} &[-20pt]      H_1(\mathcal{G}_E) \arrow{r}{\partial_1} &[-5pt] H_0(\G_E\times_{c_E} \ZZ) \arrow{r}{\id - H_0(\rho_\bullet)} &   H_0(\G_E\times_{c_E} \ZZ) \arrow{r}{H_0(\pi_\bullet)} & H_0(\mathcal{G}_E)  \arrow{r} &[-20pt]  0.
\end{tikzcd} % &[-25pt] 
\end{equation}
The map $H_0(\rho_\bullet) \colon H_0(\G_E\times_{c_E} \ZZ) \to H_0(\G_E\times_{c_E} \ZZ)$ is given by 
\[ H_0(\rho_\bullet)\left(\left[ 1_{Z(v) \times \{i\}}  \right]\right) = \left[1_{Z(v) \times \{i+1\}} \right]  \]
for $v \in E^0$ and $i \in \ZZ$. The connecting homomorphism $\partial_1$ will be described explicitly in the proof of Lemma~\ref{lem:indeximage}. From the exactness of~\eqref{eq:4exact} we deduce the following.

\begin{proposition}\label{prop:H01}	
Let $E$ be a graph and let $H_0(\rho_\bullet) \colon H_0(\G_E\times_{c_E} \ZZ) \to H_0(\G_E\times_{c_E} \ZZ)$ be as above. Then 
\[ H_0(\mathcal{G}_E)  \cong \coker(\id - H_0(\rho_\bullet))
 \qquad \text{and} \qquad 
 H_1(\mathcal{G}_E)  \cong \ker(\id - H_0(\rho_\bullet)).   \]
%\begin{align*}
%H_0(\mathcal{G}_E) & \cong \coker(\id - H_0(\rho_\bullet)), \\
%H_1(\mathcal{G}_E) & \cong \ker(\id - H_0(\rho_\bullet)).
%\end{align*}
\end{proposition}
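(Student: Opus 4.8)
The plan is to read both isomorphisms directly off the four-term exact sequence~\eqref{eq:4exact}, so that the only genuine inputs are the exactness of that sequence and the vanishing of $H_1$ of the skew product.

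First I would justify the shape of~\eqref{eq:4exact}. Proposition~\ref{prop:SES1} supplies a long exact sequence whose relevant portion reads
\[ H_1(\G_E \times_{c_E} \ZZ) \xrightarrow{H_1(\pi_\bullet)} H_1(\mathcal{G}_E) \xrightarrow{\partial_1} H_0(\G_E\times_{c_E} \ZZ) \xrightarrow{\id - H_0(\rho_\bullet)} H_0(\G_E\times_{c_E} \ZZ) \xrightarrow{H_0(\pi_\bullet)} H_0(\mathcal{G}_E) \to 0. \]
Since $\G_E \times_{c_E} \ZZ$ is an AF-groupoid by Corollary~\ref{cor:AF}, all of its higher homology vanishes, so in particular $H_1(\G_E \times_{c_E} \ZZ) = 0$. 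Hence the term to the left of $H_1(\mathcal{G}_E)$ is zero and the sequence truncates exactly to~\eqref{eq:4exact} with a leading $0$.

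Next I would extract the two isomorphisms purely from exactness. Exactness at $H_1(\mathcal{G}_E)$ makes $\partial_1$ injective, while exactness at the first copy of $H_0(\G_E\times_{c_E} \ZZ)$ gives $\im(\partial_1) = \ker(\id - H_0(\rho_\bullet))$; together these say that $\partial_1$ corestricts to an isomorphism $H_1(\mathcal{G}_E) \cong \ker(\id - H_0(\rho_\bullet))$. For the other isomorphism I would use the right-hand end: exactness at $H_0(\mathcal{G}_E)$ makes $H_0(\pi_\bullet)$ surjective, and exactness at the second copy of $H_0(\G_E\times_{c_E} \ZZ)$ gives $\ker(H_0(\pi_\bullet)) = \im(\id - H_0(\rho_\bullet))$. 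The first isomorphism theorem then yields $H_0(\mathcal{G}_E) \cong H_0(\G_E\times_{c_E} \ZZ)/\im(\id - H_0(\rho_\bullet)) = \coker(\id - H_0(\rho_\bullet))$.

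I expect no real obstacle here: the substantive work was already carried out in Proposition~\ref{prop:SES1} (and in the AF-property of the skew product), and this proposition is a formal consequence. The only points worth verifying carefully are that $H_1(\G_E \times_{c_E} \ZZ)$ truly vanishes --- which is where Corollary~\ref{cor:AF} together with the homology of AF-groupoids enters --- and that the endomorphism appearing in~\eqref{eq:4exact} is precisely the map $H_0(\rho_\bullet)$ described on the generators $[1_{Z(v)\times\{i\}}]$ just before the statement, so that its kernel and cokernel are exactly the groups named in the proposition.
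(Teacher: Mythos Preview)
Your proposal is correct and follows exactly the approach the paper takes: the paper simply states that the proposition follows ``from the exactness of~\eqref{eq:4exact}'' without giving a formal proof, and your argument spells out precisely how the two isomorphisms are read off from that exact sequence using the vanishing of $H_1(\G_E\times_{c_E}\ZZ)$ established via Corollary~\ref{cor:AF}.
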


\begin{remark}\label{rem:fullness} % This one is referred to to say that the unit space is not full in the skew product, and hence we cannot a priori identify the homology groups of the AF kernel with that of the skew product.
In the proof of~\cite[Theorem~4.14]{MatHom}, Matui obtained formulas similar to those in Proposition~\ref{prop:H01} using a spectral sequence. This relied on the fact that $H_0(\mathcal{H}_E)$ and~$H_0(\G_E\times_{c_E} \ZZ)$ can be identified when $E$ is finite (or more generally row-finite) with no sinks. For then $\partial E \times \{0\}$ is \mbox{$(\G_E \times_{c_E} \ZZ)$-full}, so $\mathcal{H}_E$ is Kakutani equivalent to $\G_E \times_{c_E} \ZZ$.
This allowed Matui to immediately realize $H_1(\mathcal{G}_E)$ as a subgroup of~$H_0(\mathcal{H}_E)$.

At this point we encounter a significant difference from the finite graph case. For when $E$ has singular vertices one can show that $\partial E \times \{0\}$ \emph{never} is~$(\G_E \times_{c_E} \ZZ)$-full. So in our setting we cannot necessarily identify $H_0(\mathcal{H}_E)$ with~$H_0(\G_E \times_{c_E} \ZZ)$. We will, however, be able to identify the former with a subgroup of the latter.
%This will turn out to be enough in order to follow the general strategy Matui employs in the finite graph case (with suitable adaptations).
\end{remark}

\subsection{The second embedding} 

%The preceding proposition shows in particular that $H_1(\mathcal{G}_E)$ embeds into $H_0(\G_E\times_{c_E} \ZZ)$. Our next goal is to show that $H_0(\mathcal{H}_E)$ embeds into~$H_0(\G_E\times_{c_E} \ZZ)$ in a natural way.
% Kind of said this part at the beginning of the section...

Recall that $\mathcal{H}_E = \ker(c_E) \subseteq \mathcal{G}_E$ and from
Lemma~\ref{lem:kerc} we have that $\mathcal{H}_E \cong \left( \G_E \times_{c_E} \ZZ \right) \vert_{\partial E \times \{0\}}$  via the identification $(x,0,y) \leftrightarrow ((x,0,y),0)$. In~$H_0(\mathcal{H}_E)$ we have the relation
\[\left[ 1_{Z(\mu)} \right] = \left[1_{Z(\nu)} \right] \]
whenever $\mu, \nu \in E^*$ satisfy $\vert \mu \vert = \vert \nu \vert$ and $r(\mu) = r(\nu)$. The element ${\left[ 1_{Z(\mu)} \right] \in H_0(\mathcal{H}_E)}$ corresponds to~$\left[ 1_{Z(\mu) \times \{0\}} \right] \in H_0(\left( \G_E \times_{c_E} \ZZ \right) \vert_{\partial E \times \{0\}})$ under the identification above. On the other hand, the indicator function $1_{Z(\mu) \times \{0\}}$ gives rise to an element $\left[ 1_{Z(\mu) \times \{0\}} \right]$ in~$H_0(\G_E\times_{c_E} \ZZ)$ as well. A priori, these are different, but we will see that mapping~${\left[ 1_{Z(\mu)} \right] \in H_0(\mathcal{H}_E)}$ to~$\left[ 1_{Z(\mu) \times \{0\}} \right] \in H_0(\G_E\times_{c_E} \ZZ)$ actually gives an embedding of groups. So that in the end, there is no ambiguity. The map from $H_0(\mathcal{H}_E)$ to $H_0(\G_E\times_{c_E} \ZZ)$ proposed above extends to arbitrary elements by 
\[H_0(\mathcal{H}_E) \ni [f] \longmapsto [ f \times 0] \in H_0(\G_E \times_{c_E} \ZZ)   \] 
for $f \in C_c(\partial E, \ZZ)$, where  $f \times 0 \in C_c(\partial E  \times \ZZ, \ZZ)$ is given by 
\[ (f \times 0)(x,m) = \begin{cases} f(x) & \text{ if } m = 0, \\
0 & \text{ otherwise.}
\end{cases}  \]
By noting that $\left( \G_E \times_{c_E} \ZZ \right) \vert_{\partial E \times \{0\}} = \mathcal{H}_E \times \{0\} \subseteq \G_E \times \ZZ = \G_E \times_{c_E} \ZZ$ as sets, it is not hard to see that this is a well-defined homomorphism. Its injectivity will be deduced using the second long exact sequence from Section~\ref{sec:LES}.
% Don't quite understand why I insisted on writing ker(c_E)...?! Was for explaining well-def...

\begin{lemma}\label{lem:H0inH0}
Let $E$ be a graph. The homomorphism $\phi \colon H_0(\mathcal{H}_E) \to H_0(\G_E\times_{c_E} \ZZ)$ given by~$\phi([f]) = [ f \times 0]$ for $f \in C_c(\partial E, \ZZ)$ is injective.
% OLD: Let $E$ be a graph. There is an injective homomorphism $\phi \colon H_0(\mathcal{H}_E) \to H_0(\G_E\times_{c_E} \ZZ)$ given by $\phi([f]) = [ f \times 0]$ for $f \in C_c(\partial E, \ZZ)$.
\end{lemma}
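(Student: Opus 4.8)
The plan is to apply the second long exact sequence from Proposition~\ref{prop:SES} to a suitable pair of nested open invariant subsets of the skew product's unit space, chosen precisely so that the relevant restriction of $\G_E \times_{c_E} \ZZ$ recovers the copy of $\mathcal{H}_E$ sitting at level $0$. Concretely, set $\mathcal{G} = \G_E \times_{c_E} \ZZ$, and consider the open subsets of $\mathcal{G}^{(0)} = \partial E \times \ZZ$ given by the half-lines in the $\ZZ$-coordinate. I would first verify that such sets are genuinely $\mathcal{G}$-invariant: since $s(g,m) = (s(g), c_E(g)+m)$ and $r(g,m) = (r(g),m)$, the integer coordinate can only increase or decrease along the groupoid in a controlled way, and one checks that a set of the form $\partial E \times (-\infty, N]$ (or its complement) is invariant. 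The goal is to arrange the pair $Z \subseteq Y$ so that $Y \setminus Z = \partial E \times \{0\}$, whence $\mathcal{G}\vert_{Y\setminus Z} \cong \mathcal{H}_E$ by Lemma~\ref{lem:kerc}.

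With this setup, the long exact sequence furnishes the tail
\[
H_0(\mathcal{G}\vert_Z) \xrightarrow{\ H_0(\iota_\bullet)\ } H_0(\mathcal{G}\vert_Y) \xrightarrow{\ H_0(\kappa_\bullet)\ } H_0(\mathcal{G}\vert_{Y\setminus Z}) \to 0,
\]
and more importantly the preceding term $H_1(\mathcal{G}\vert_{Y\setminus Z})$ maps onto $\ker H_0(\iota_\bullet)$ via the connecting map. But $\mathcal{G}\vert_{Y\setminus Z} \cong \mathcal{H}_E$ is an AF-groupoid by Corollary~\ref{cor:AF}, so $H_1(\mathcal{H}_E) = 0$. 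Exactness then forces $H_0(\kappa_\bullet)$-precursor, namely the connecting homomorphism into $H_0(\mathcal{G}\vert_Z)$, to be such that the map $H_0(\mathcal{H}_E) \to H_0(\mathcal{G}\vert_Y)$ appearing earlier in the sequence is injective. The plan is to read off injectivity of the relevant inclusion-induced map directly from the vanishing of $H_1$ of the AF-quotient-groupoid.

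The remaining task is to reconcile the map produced by the long exact sequence with the explicitly defined $\phi([f]) = [f \times 0]$. Here I would trace through the chain-level description: the map $\iota_\bullet$ in Proposition~\ref{prop:SES} is ``extension by zero'', and at the level of $C_c((\mathcal{G}\vert_{Y\setminus Z})^{(0)}, \ZZ) = C_c(\partial E \times \{0\}, \ZZ)$ this sends $f$ (viewed on level $0$) to the function on $\partial E \times \ZZ$ that agrees with $f$ on level $0$ and vanishes elsewhere — which is exactly $f \times 0$. One caveat: the sequence in Proposition~\ref{prop:SES} takes $Y \setminus Z$ as the quotient and $Z$ as the subobject, so I must orient the pair so that $\mathcal{H}_E$ is the quotient piece and then follow $\kappa_\bullet$; alternatively, I would reorganize the pair so that $\{0\}$ is the \emph{closed} fibre and use the portion of the sequence where the extension-by-zero map from the subgroupoid's $H_0$ appears. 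I expect the main obstacle to be precisely this bookkeeping — identifying which of $\iota_\bullet$ or $\kappa_\bullet$ realizes $\phi$, and selecting $Z \subseteq Y$ with the correct invariance so that $\mathcal{H}_E$ occupies the slot whose injectivity is guaranteed by the vanishing $H_1$ of the adjacent AF-groupoid. Once the pair is chosen correctly, injectivity of $\phi$ should drop out of exactness together with Corollary~\ref{cor:AF}.
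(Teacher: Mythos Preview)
Your approach has a genuine gap: the half-line sets $\partial E \times (-\infty, N]$ are \emph{not} $(\mathcal{G}_E \times_{c_E} \ZZ)$-invariant. For any edge $e$ and any $x \in Z(r(e))$, the element $g = (ex, 1, x) \in \mathcal{G}_E$ has $c_E(g) = 1$, so $(g, N)$ has range at level $N$ but source at level $N+1$. The same computation shows that $\partial E \times \{0\}$ itself is not invariant, so you cannot realize it as $Y \setminus Z$ for open invariant $Z \subseteq Y$; Proposition~\ref{prop:SES} simply does not apply to any pair with $Y \setminus Z = \partial E \times \{0\}$. The directional confusion you flag is a symptom of this: if $\mathcal{H}_E$ sat in the $Y \setminus Z$ slot, the map $H_0(\kappa_\bullet)$ would be restriction to level $0$ (a surjection \emph{onto} $H_0(\mathcal{H}_E)$), not extension by zero, and the vanishing of $H_1(\mathcal{H}_E)$ would only give injectivity of $H_0(\mathcal{G}|_Z) \to H_0(\mathcal{G}|_Y)$ --- the wrong map entirely.

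The paper's fix is to take $Z$ to be the \emph{saturation} $r(s^{-1}(\partial E \times \{0\}))$, the smallest invariant open set containing level $0$. Then $\partial E \times \{0\}$ is $(\mathcal{G}|_Z)$-full by construction, so Kakutani equivalence gives an isomorphism $H_0(\mathcal{H}_E) \cong H_0(\mathcal{G}|_Z)$ sending $[1_{Z(\mu)}] \mapsto [1_{Z(\mu) \times \{0\}}]$. Now apply Proposition~\ref{prop:SES} with this $Z$ and $Y = \partial E \times \ZZ$: the restriction $\mathcal{G}|_{Y \setminus Z}$ is an AF-groupoid (being a closed restriction of the AF-groupoid $\mathcal{G}_E \times_{c_E} \ZZ$ from Corollary~\ref{cor:AF}), so it is \emph{its} $H_1$ that vanishes, making $H_0(\iota_\bullet) \colon H_0(\mathcal{G}|_Z) \hookrightarrow H_0(\mathcal{G})$ injective. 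Composing with the Kakutani isomorphism yields exactly $\phi$. The missing idea is thus not which AF-groupoid's $H_1$ to kill, but how to manufacture an honest invariant open set whose $H_0$ already agrees with $H_0(\mathcal{H}_E)$.
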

\begin{proof}
%To prove the lemma we will essentially realize $\phi$ as $H_0(\iota_\bullet)$ from Proposition~\ref{prop:SES}.
In the setting of Proposition~\ref{prop:SES}, set $\mathcal{G} = \mathcal{G}_E \times_{c_E} \ZZ$, $Y = \mathcal{G}^{(0)} = \partial E \times \ZZ$ and~${X = \partial E \times \{0\}}$. The clopen set $X$ is neither $\mathcal{G}$-full nor invariant, so we instead consider its saturation, namely $Z \coloneqq r(s^{-1}(X))$. In words $Z$ is the smallest $\mathcal{G}$-invariant subset containing $X$. By étaleness, $Z$ is open in $\partial E \times \ZZ$. By its very definition, $X$ is clopen in $Z$ and $\mathcal{G} \vert_Z$-full, hence $\mathcal{H}_E \cong \mathcal{G} \vert_X = \left(\mathcal{G} \vert_Z \right)  \vert_X$ is Kakutani equivalent to~$\mathcal{G} \vert_Z$. The induced isomorphism $H_0(\mathcal{H}_E) \cong H_0(\mathcal{G} \vert_Z)$ maps $[1_{Z(\mu)}]$ to $[1_{Z(\mu) \times \{0\}}]$, where we now consider $1_{Z(\mu) \times \{0\}} \in C_c(Z, \ZZ)$. Since $\mathcal{G}$ is an AF-groupoid and the set $Y \setminus Z$ is closed in~$\mathcal{G}^{(0)}$, the restriction $\mathcal{G} \vert_{(Y \setminus Z)}$ becomes an AF-groupoid (in the relative topology) as well. Its~$H_1$ group then vanishes and the first part of the long exact sequence in Proposition~\ref{prop:SES} becomes
\[ \begin{tikzcd}[column sep = 1.8em]
0 \arrow{r} & H_0\left(\left( \G_E \times_{c_E} \ZZ \right) \vert_Z\right) \arrow{r}{H_0(\iota_\bullet)} &   H_0(\G_E \times_{c_E} \ZZ) \arrow{r}{H_0(\kappa_\bullet)} & H_0\left(\left( \G_E \times_{c_E} \ZZ \right) \vert_{(\partial E \times \ZZ) \setminus Z}  \right)  \arrow{r} & 0.
\end{tikzcd} \]
The map $H_0(\iota_\bullet)$ is given by inclusion (i.e.\ by extending to $0$). So if we compose $H_0(\iota_\bullet)$ with the isomorphism $H_0(\mathcal{H}_E) \cong H_0(\mathcal{G} \vert_Z) = H_0\left(\left( \G_E \times_{c_E} \ZZ \right) \vert_Z\right)$ from above we get $\phi$
%the homomorphism $\phi \colon H_0(\mathcal{H}_E) \to H_0(\G_E\times_{c_E} \ZZ)$
back. Its injectivity then follows from the injectivity of $H_0(\iota_\bullet)$.
\end{proof}

\begin{remark}
We can actually describe the set $Z$ from the proof of Lemma~\ref{lem:H0inH0}  explicitly, assuming that $E$ is strongly connected, as follows: 
\[ Z = \{(x,k) \mid x \in E^\infty, k \in \ZZ  \} \bigsqcup  \{(\mu, l) \mid \mu \in \partial E \cap E^*, \ l \geq - \vert \mu \vert  \} \subseteq \partial E \times \ZZ = Y.  \]
The complement is therefore 
\[ Y \setminus Z = (\partial E \times \ZZ) \setminus Z =  \{(\mu, l) \mid \mu \in \partial E \cap E^*, \ l < - \vert \mu \vert  \}.    \]
If $E$ has a singular vertex, then $Z$ is an open and dense proper subset of $\partial E \times \ZZ$, as well as $\mathcal{G}_E \times_{c_E} \ZZ$-invariant. And the complement is non-empty, closed, has empty interior and is also invariant. 
\end{remark} %In our setting this is possible, because $\mathcal{G}_E \times_{c_E} \ZZ$ is a non-minimal groupoid in the infinite case.

\subsection{The third embedding}

From now on we will freely identify $H_0(\mathcal{H}_E)$ with the subgroup generated by the elements $\left[1_{Z(\mu) \times \{0\}} \right]$ for $\mu \in E^*$ inside $H_0(\G_E\times_{c_E} \ZZ)$. The first thing we shall note is that this copy of $H_0(\mathcal{H}_E)$ inside $H_0(\G_E\times_{c_E} \ZZ)$ is invariant under~$H_0(\rho_\bullet)$, provided that $E$ has no sources. Indeed, for~$\mu \in E^*$ 
\[H_0(\rho_\bullet)\left(\left[1_{Z(\mu) \times \{0\}} \right]\right) = \left[1_{Z(\mu) \times \{1\}} \right] = \left[1_{Z(e \mu) \times \{0\}} \right], \]
where $e$ is any edge whose range is $s(\mu)$ (and the equivalence class does not depend on which edge $e$ is chosen). The restriction of $H_0(\rho_\bullet)$ to $H_0(\mathcal{H}_E)$ will be important in the sequel, so we give it a name of its own.

\begin{definition}\label{def:delta}
Let $E$ be an essential graph. By viewing $H_0(\mathcal{H}_E)$ as a subgroup of~${H_0(\G_E\times_{c_E} \ZZ)}$ we define an endomorphism~$\varphi \colon H_0(\mathcal{H}_E) \to H_0(\mathcal{H}_E)$ by \[\varphi\left(\left[1_{Z(\mu) \times \{0\}} \right] \right) = H_0(\rho_\bullet)\left(\left[1_{Z(\mu) \times \{0\}} \right]\right) = \left[1_{Z(e \mu) \times \{0\}} \right], \]
where $e \in E^1s(\mu)$ is arbitrary.
\end{definition} % NB: only defined for essential graphs. But thats okay for we will just use it to prove TR for strongly connected graphs actually...

In the next section we will see that the image of an element of the topological full group under the index map can be described in terms of the map $\varphi$.

\begin{remark}\label{rem:MatuiDelta}
On page 56 of~\cite{MatTFG} Matui implicitly defines, for any finite strongly connected graph $E$, an automorphism denoted~$\delta$ of $H_0(\mathcal{H}_E)$. Explicitly, $\delta$ is given by
\begin{align*}
  \delta \left(  \left[1_{Z(\mu) \times \{0\}}  \right] \right) & = \left[1_{Z(\sigma_E(\mu)) \times \{0\}} \right] = \left[1_{Z(\mu) \times \{-1\}} \right] \\
\text{for }  \left[1_{Z(\mu) \times \{0\}} \right] & \in H_0(\mathcal{H}_E) = \spann \left\{\left[1_{Z(\mu) \times \{0\}} \right] \mid \mu \in E^{\geq 1} \right\}.
\end{align*}
Hence the homomorphism $\varphi$ from Definition~\ref{def:delta} equals $\delta^{-1}$. But if the graph $E$ has singular vertices, then $\delta$ is no longer globally defined on $H_0(\mathcal{H}_E)$. To see this, note that~$\varphi$ is generally not surjective. %, and hence its inverse $\delta$ is not defined.
For example, the elements $\left[1_{Z(w) \times \{0\}} \right]$, where $w$ is an infinite emitter, will generally not be in the image of $\varphi$.
\end{remark}

%We are now ready to prove a result which implies the  the third and final embedding of the homology groups, namely that  $H_1(\mathcal{G}_E)\hookrightarrow H_0(\mathcal{H}_E)$.

We are now ready to prove the  the third and final embedding of the homology groups.

\begin{lemma}\label{lem:kerker}
Let $E$ be an essential graph. Then $\ker(\id - H_0(\rho_\bullet)) = \ker(\id - \varphi)$ as subsets of $H_0(\G_E\times_{c_E} \ZZ)$.
\end{lemma}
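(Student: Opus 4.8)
The plan is to reduce the asserted equality of kernels to the single statement that every $H_0(\rho_\bullet)$-fixed element of $H_0(\G_E\times_{c_E}\ZZ)$ already lies in the subgroup $H_0(\mathcal{H}_E)$. One inclusion is free: by Definition~\ref{def:delta} the endomorphism $\varphi$ is precisely the restriction of $H_0(\rho_\bullet)$ to $H_0(\mathcal{H}_E)$, so if $\varphi(x)=x$ then $H_0(\rho_\bullet)(x)=x$, giving $\ker(\id-\varphi)\subseteq\ker(\id-H_0(\rho_\bullet))$. For the reverse inclusion it is enough to prove $\ker(\id-H_0(\rho_\bullet))\subseteq H_0(\mathcal{H}_E)$: any such $x$ is then fixed by $\varphi=H_0(\rho_\bullet)\vert_{H_0(\mathcal{H}_E)}$ and hence lies in $\ker(\id-\varphi)$.

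The key device I would introduce is the \emph{level filtration}. For $n\geq 0$ set
\[ F_n \coloneqq \spann\left\{ \left[1_{Z(v)\times\{i\}}\right] : v\in E^0,\ i\geq -n \right\} \subseteq H_0(\G_E\times_{c_E}\ZZ), \]
and I would record three facts. First, $\bigcup_{n\geq0}F_n = H_0(\G_E\times_{c_E}\ZZ)$, since the latter is spanned by the classes $[1_{Z(v)\times\{i\}}]$ and every element is a \emph{finite} combination of them. Second, $F_0=H_0(\mathcal{H}_E)$: indeed $H_0(\mathcal{H}_E)$ is generated by the classes $[1_{Z(\mu)\times\{0\}}]=[1_{Z(r(\mu))\times\{\vert\mu\vert\}}]$ with $\mu\in E^*$, which are exactly the nonnegative-level generators of $F_0$, and since $E$ has no sources there is for every $v\in E^0$ and every $n\geq0$ a path of length $n$ ending at $v$, so every generator $[1_{Z(v)\times\{i\}}]$ with $i\geq0$ is of this form. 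Third, $H_0(\rho_\bullet)(F_n)\subseteq F_{n-1}$ for $n\geq1$, because $H_0(\rho_\bullet)$ sends $[1_{Z(v)\times\{i\}}]$ to $[1_{Z(v)\times\{i+1\}}]$ and hence carries the generating set of $F_n$ into that of $F_{n-1}$.

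With these in hand the conclusion is immediate: given $x\in\ker(\id-H_0(\rho_\bullet))$, the first fact lets me choose $n$ with $x\in F_n$, and then iterating the third fact gives
\[ x = H_0(\rho_\bullet)^n(x) \in H_0(\rho_\bullet)^n(F_n)\subseteq F_0 = H_0(\mathcal{H}_E), \]
which is what remained to be shown.

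I expect the only real subtlety to be the verification that $F_0=H_0(\mathcal{H}_E)$, as this is exactly the point where essentiality (no sources) enters and where our situation differs from the finite/row-finite case: there one has $H_0(\mathcal{H}_E)=H_0(\G_E\times_{c_E}\ZZ)$ outright (cf.\ Remark~\ref{rem:fullness}), whereas here the filtration genuinely climbs through the negative levels. It is worth emphasizing that the argument uses neither injectivity nor surjectivity of $H_0(\rho_\bullet)$---only that it raises the level by one---so no appeal to the long exact sequences of Section~\ref{sec:LES} is needed for this particular step.
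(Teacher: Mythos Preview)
Your argument is correct, and it is genuinely different from---and in fact cleaner than---the paper's own proof. The paper proceeds by first massaging an arbitrary element $\omega\in H_0(\G_E\times_{c_E}\ZZ)$ into a ``standard form'' in which the negative-level contributions are supported only at singular vertices, and then invokes Lemma~\ref{lem:freesummand} (that each $[1_{Z(w)\times\{i\}}]$ with $w$ singular generates a free summand) to argue inductively that these negative-level coefficients must vanish when $\omega$ is $H_0(\rho_\bullet)$-fixed. Your filtration argument bypasses both the standard form and Lemma~\ref{lem:freesummand} entirely: you only use that $H_0(\rho_\bullet)$ shifts the level by one, so iterating it on a fixed element drags that element up into $F_0$. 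This is more elementary and makes no reference to the presence of singular vertices. The paper's route, on the other hand, makes visible exactly \emph{which} generators obstruct $H_0(\mathcal{H}_E)$ from being all of $H_0(\G_E\times_{c_E}\ZZ)$ (namely the singular-vertex classes at negative levels), which is conceptually informative even if not logically necessary for this lemma.
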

\begin{proof}
With $\phi$ as in Lemma~\ref{lem:H0inH0} we have the commutative diagram 
%\[\begin{tikzcd}[column sep = large, row sep = large]
%H_0(\mathcal{H}_E) \arrow[hook]{d}{\phi}  \arrow{r}{\id - \varphi}    & H_0(\mathcal{H}_E) \arrow[hook]{d}{\phi}   \\
%H_0(\G_E\times_{c_E} \ZZ) \arrow{r}{\id - H_0(\rho_\bullet)} &   H_0(\G_E\times_{c_E} \ZZ)
%\end{tikzcd}\]
\[\begin{tikzcd}[column sep = large, row sep = large]
H_0(\G_E\times_{c_E} \ZZ) \arrow{r}{\id - H_0(\rho_\bullet)} & H_0(\G_E\times_{c_E} \ZZ) \\
H_0(\mathcal{H}_E) \arrow{r}{\id - \varphi} \arrow[hook]{u}{\phi} & H_0(\mathcal{H}_E) \arrow[hook]{u}{\phi}
\end{tikzcd}\]
under which we identify $H_0(\mathcal{H}_E)$ with $\phi(H_0(\mathcal{H}_E)) \subseteq H_0(\G_E\times_{c_E} \ZZ)$. From this it is clear that~${\ker(\id - \varphi)  \subseteq  \ker(\id - H_0(\rho_\bullet))}.$
%$\ker(\id - H_0(\rho_\bullet)) \supseteq \ker(\id - \varphi)$.

To prove the reverse inclusion we first show that any element of $H_0(\G_E\times_{c_E} \ZZ)$ can be put in a certain ``standard form''. Each element $\omega \in H_0(\G_E\times_{c_E} \ZZ)$ can be written as 
\[\omega = \sum_{i=-n}^n \sum_{j=1}^{k_i} \lambda_{i,j} \left[1_{Z(v_{i,j}) \times \{i\}} \right], \]
where $\lambda_{i,j}$ are integers and $v_{i,j} \in E^0$. When $i \geq 0$ we have
\begin{equation}\label{eq:pos}
\left[1_{Z(v) \times \{i\}} \right] = \left[1_{Z(\mu) \times \{0\}} \right],
\end{equation}
where $\mu$ is any path of length $i$ in $E$ which ends in $v$. When $v$ is a regular vertex we have 
\begin{equation}\label{eq:reg}
\left[1_{Z(v) \times \{i\}} \right] = \sum_{e \in vE^1} \left[1_{Z(r(e)) \times \{i+1\}} \right].
\end{equation}
So when $i < 0$ we can, by repeated use of~\eqref{eq:reg}, write 
\begin{equation}\label{eq:neg}
\left[1_{Z(v) \times \{i\}} \right] = \sum_{j = i}^{-1} \sum_{k=1}^{K_j} \left[1_{Z(w_{j, k}) \times \{j\}} \right] + \sum_{k=1}^{K_0} \left[1_{Z(v_k) \times \{0\}} \right],
\end{equation}
where each $w_{j, k}$ is an infinite emitter. Combining~\eqref{eq:pos} and~\eqref{eq:neg} we see that we can write the arbitrary element~$\omega$ as
\begin{equation*}
\omega = \sum_{i = -n}^{-1} \sum_{j=1}^{J_i} \lambda_{i,j} \left[1_{Z(w_{i, j}) \times \{i\}} \right] + \sum_{j=1}^{J_0} \lambda_{0,j} \left[1_{Z(\mu_j) \times \{0\}} \right],
\end{equation*}
where $n \in \mathbb{N}$, $\lambda_{i,j} \in \mathbb{Z}$, each $w_{i, j}$ is an infinite emitter and $\mu_j \in E^*$. We may assume that all the $w_{i,j}$'s are different for each fixed $i$.

Suppose now that $\omega \in \ker(\id - H_0(\rho_\bullet))$. We need to show that $\omega \in H_0(\mathcal{H}_E)$ (viewed as a subgroup of $H_0(\G_E\times_{c_E} \ZZ)$). We compute 
\begin{align*}
H_0(\rho_\bullet)(\omega) &= \sum_{i = -n}^{-1} \sum_{j=1}^{J_i} \lambda_{i,j} \left[1_{Z(w_{i, j}) \times \{i + 1\}} \right] + \sum_{j=1}^{J_0} \lambda_{0,j} \left[1_{Z(\mu_j) \times \{1\}} \right] \\ 
& = \sum_{i = -n+1}^{0} \sum_{j=1}^{J_{i-1}} \lambda_{i-1,j} \left[1_{Z(w_{i-1, j}) \times \{i\}} \right] + \sum_{j=1}^{J_0} \lambda_{0,j} \left[1_{Z(e_j \mu_j) \times \{0\}} \right],
\end{align*}
where $e_j$ is any edge ending in $s(\mu_j)$. From this we get 
\begin{align}\label{eq:1.4}
0 &= \omega - H_0(\rho_\bullet)(\omega) =  \sum_{j=1}^{J_{-n}} \lambda_{-n,j} \left[1_{Z(w_{-n, j}) \times \{-n\}} \right] \nonumber \\
& + \sum_{i = -n+1}^{-1}  \left( \sum_{j=1}^{J_i} \lambda_{i,j} \left[1_{Z(w_{i, j}) \times \{i\}} \right]   -  \sum_{j=1}^{J_{i-1}} \lambda_{i-1,j} \left[1_{Z(w_{i-1, j}) \times \{i\}} \right]     \right)     \nonumber \\
& + \sum_{j=1}^{J_0} \left(      \lambda_{0,j} \left[1_{Z(\mu_j) \times \{0\}} \right]    -   \lambda_{0,j} \left[1_{Z(e_j \mu_j) \times \{0\}} \right]     \right)    - \sum_{j=1}^{J_{-1}} \lambda_{-1,j} \left[1_{Z(w_{-1, j}) \times \{0\}} \right].
\end{align}
As $w_{-n, j}$ is singular, each of~$\left[1_{Z(w_{-n, j}) \times \{-n\}} \right]$ generates a free summand of ${H_0(\G_E\times_{c_E} \ZZ)}$ by Lemma~\ref{lem:freesummand}. Since all the other terms have  a strictly smaller second coordinate, in order for the right hand side of~\eqref{eq:1.4} to be $0$ we must have $\lambda_{-n,j} = 0$ for all $1 \leq j \leq J_{-n}$. Thus we may replace $-n$ with $-n+1$ in the expression for $\omega$. Arguing inductively we get that $\lambda_{i,j} = 0$ for all $-1 \leq i \leq -n$ and~$1 \leq j \leq J_i$. Hence the expression for $\omega$ reduces to 
\[\omega = \sum_{j=1}^{J_0} \lambda_{0,j} \left[1_{Z(\mu_j) \times \{0\}} \right],  \]
from which we see that $\omega \in H_0(\mathcal{H}_E)$.
\end{proof}

\section{The image of the index map}\label{sec:indexmap}

Recall the index map $I \colon \llbracket \mathcal{G}_E \rrbracket \to H_1(\G_E)$ described in Section~\ref{sec:AH}. Our main goal is to establish that the kernel of the index map is generated by transpositions (i.e.\ property~TR) for minimal graph groupoids. To that end, the goal of this section is to describe the image $I(\alpha) \in H_1(\mathcal{G}_E)$ of an element $\alpha \in \llbracket \mathcal{G}_E \rrbracket$ under the identification $H_1(\mathcal{G}_E)  \cong \ker(\id - \varphi)$ from Proposition~\ref{prop:H01} and Lemma~\ref{lem:kerker}. 

\subsection{Graded partitions}

The identification desribed above will be done in terms of the following ``graded partitions'' as defined in~\cite[page~60]{MatTFG}. 

\begin{definition}\label{def:gradPart}
Let $E$ be a graph. For $\alpha = \pi_U \in \llbracket \mathcal{G}_E \rrbracket$ and $k \in \ZZ$ we define the set 
\[S_{\alpha }(k) \coloneqq s\left(U \cap c_E^{-1}(k) \right) = \{x \in \partial E \mid (\alpha(x), k, x) \in U  \}. \] 
\end{definition}

Note that each $S_{\alpha }(k)$ is clopen and that $\partial E \setminus \supp(\alpha) \subseteq S_\alpha(0)$, i.e.\ $S_\alpha(0)$ contains the largest (cl)open set fixed by $\alpha$. As $\supp(\alpha)$ is compact, $S_{\alpha }(k)$ is also compact when~$k \neq 0$. This implies that only finitely many $S_{\alpha }(k)$'s will be non-empty. Hence these form a finite partition of the boundary path space $\partial E$. We make a few more observations about these graded partitions that we are going to need in the proof of the main result.
% in words the set $S_{\alpha }(k) \subseteq \partial E$ contains those boundary paths $x$ which are tail equivalent to~$\alpha(x)$ with lag $k$ and such that the unique groupoid element in $U$ having $x$ as its source takes the value~$k$ under the cocycle $c_E$
% Why S_alpha(k) is clopen: because s is homeomorphism on U and U cap c_E^-1(k) is clopen in U.

\begin{lemma}\label{lem:alfaSalfa}
Let $E$ be a graph and let $\alpha \in \llbracket \mathcal{G}_E \rrbracket$. We have ${\alpha(S_\alpha(k)) = S_{\alpha^{-1}}(-k)}$ for each $k \in \ZZ$.
\end{lemma}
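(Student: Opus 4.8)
The plan is to prove the identity by a direct computation straight from the definitions, unwinding the meaning of $S_\alpha(k)$ and $S_{\alpha^{-1}}(-k)$ in terms of the bisections $U_\alpha$ and $U_{\alpha^{-1}}$. The key structural fact I would use is that the full bisection giving rise to $\alpha^{-1}$ is precisely the inverse bisection, i.e.\ $U_{\alpha^{-1}} = U_\alpha^{-1}$; this follows from the uniqueness of the bisection representing a given element of $\llbracket \mathcal{G}_E \rrbracket$ (noted in Subsection~\ref{subsec:tfg}) together with $\pi_{U^{-1}} = (\pi_U)^{-1}$. Writing $U \coloneqq U_\alpha$, I would then combine this with the inverse formula $(x,k,y)^{-1} = (y,-k,x)$ in $\mathcal{G}_E$ and the defining property $c_E(x,k,y) = k$ of the graph cocycle.

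Concretely, for a fixed $y \in \partial E$ I would run the chain of equivalences
\[ y \in S_{\alpha^{-1}}(-k) \iff (\alpha^{-1}(y), -k, y) \in U^{-1} \iff (y, k, \alpha^{-1}(y)) \in U, \]
where the first step is the definition of $S_{\alpha^{-1}}(-k)$ (using $U_{\alpha^{-1}} = U^{-1}$) and the second applies the inverse map, noting that $(\alpha^{-1}(y), -k, y)^{-1} = (y, k, \alpha^{-1}(y))$. Substituting $x \coloneqq \alpha^{-1}(y)$, so that $y = \alpha(x)$ and the triple reads $(\alpha(x), k, x)$, the right-hand condition becomes exactly $x \in S_\alpha(k)$. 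Hence $y \in S_{\alpha^{-1}}(-k)$ if and only if $\alpha^{-1}(y) \in S_\alpha(k)$, i.e.\ if and only if $y \in \alpha(S_\alpha(k))$, which is the claim.

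I do not anticipate any genuine obstacle here; the statement is essentially a bookkeeping lemma and its entire content is the observation $U_{\alpha^{-1}} = U^{-1}$ combined with the inverse formula on triples. The only points requiring a little care are to confirm that $U_\alpha^{-1}$ really is the bisection representing $\alpha^{-1}$ (so that $S_{\alpha^{-1}}$ is computed from the correct bisection), and to keep the sign of the cocycle consistent: an element of $U$ with cocycle value $k$ becomes, upon inversion, an element of $U^{-1}$ with cocycle value $-k$, which is exactly what pairs $S_\alpha(k)$ with $S_{\alpha^{-1}}(-k)$.
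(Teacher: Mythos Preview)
Your argument is correct and follows essentially the same approach as the paper: both rely on the identification $U_{\alpha^{-1}} = U_\alpha^{-1}$ together with the inverse formula $(x,k,y)^{-1} = (y,-k,x)$ to pass between $S_\alpha(k)$ and $S_{\alpha^{-1}}(-k)$. The only cosmetic difference is that you prove the equality as a direct biconditional, whereas the paper establishes the inclusion $\alpha(S_\alpha(k)) \subseteq S_{\alpha^{-1}}(-k)$ and then invokes the fact that both families partition the unit space to conclude equality.
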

\begin{proof}
Recall that $U_\alpha$ denotes the unique bisection which satisfies~$\alpha = \pi_{U_\alpha}$. Suppose that ${x \in S_\alpha(k)}$, i.e.~$(\alpha(x), k, x) \in U_\alpha$. Then $(x, - k, \alpha(x)) \in (U_\alpha)^{-1} = U_{\alpha^{-1}}$. This shows that $\alpha(x) \in S_{\alpha^{-1}}(-k)$, hence we have the containment $\alpha(S_\alpha(k)) \subseteq S_{\alpha^{-1}}(-k)$ for all integers $k$. Since these sets form partitions of the unit space we must necessarily have equality.
\end{proof}
%Alternative set-based proof: \[\alpha(S_\alpha(k)) = \pi_U\left(s\left(U \cap c_E^{-1}(k) \right)\right) = r\left(U \cap c_E^{-1}(k) \right) = s\left(U^{-1} \cap c_E^{-1}(-k) \right) = S_{\pi_U^{-1}}(-k)\]

The next observation is that when two elements of the topological full group have the same graded partitions, then their difference belongs to the AF-kernel of the cocycle. 

\begin{lemma}\label{lem:betaalphainverse}
Let $E$ be a graph and let $Y \subseteq \mathcal{G}_E^{(0)} = \partial E$ be clopen. Suppose $\alpha, \beta  \in \llbracket  \mathcal{G}_E \vert_Y \rrbracket$ satisfy $S_\alpha(k) = S_\beta(k)$ for all $k \in \ZZ$. Then $\beta \alpha^{-1}  \in \llbracket  \mathcal{H}_E \vert_Y \rrbracket$, that is, $U_{\beta \alpha^{-1}} \subseteq c_E^{-1}(0)$.
\end{lemma}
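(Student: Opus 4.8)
The plan is to track the cocycle $c_E$ directly through the composition $\beta\alpha^{-1}$, exploiting the fact that for any $\gamma \in \llbracket \mathcal{G}_E \vert_Y \rrbracket$ and any $x \in Y$ there is a unique $g \in U_\gamma$ with $s(g) = x$, and that the graded partition records precisely the cocycle value of this element: indeed $x \in S_\gamma(k)$ exactly when the unique $g \in U_\gamma$ over $x$ satisfies $c_E(g) = k$. Since the $S_\gamma(k)$ form a (finite) partition of $Y$, this assigns to each $x$ a well-defined integer, namely the value of $c_E$ along $U_\gamma$ above $x$. The whole point is then to show this integer is always $0$ for $\gamma = \beta\alpha^{-1}$.

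First I would recall that multiplication in the topological full group corresponds to the product of bisections, so that $U_{\beta\alpha^{-1}} = U_\beta \cdot U_\alpha^{-1}$, where the product denotes the set of composable products $\{g h \mid g \in U_\beta, \ h \in U_\alpha^{-1}, \ s(g) = r(h)\}$. Since $c_E$ is a groupoid homomorphism, it is additive on composable pairs, giving $c_E(gh) = c_E(g) + c_E(h)$; this additivity is the only structural ingredient needed.

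Next I would take an arbitrary $g \in U_{\beta\alpha^{-1}}$ with $s(g) = x$ and factor it as $g = g_\beta\, g_{\alpha^{-1}}$ with $g_\beta \in U_\beta$, $g_{\alpha^{-1}} \in U_\alpha^{-1}$, and $r(g_{\alpha^{-1}}) = s(g_\beta) =: y = \alpha^{-1}(x)$. Writing $g_{\alpha^{-1}} = (y, m, x)$, its inverse $(x, -m, y)$ lies in $U_\alpha$ with source $y$, so $y \in S_\alpha(-m)$; likewise $g_\beta = (\beta(y), n, y)$ gives $y \in S_\beta(n)$. The hypothesis $S_\alpha(k) = S_\beta(k)$ for all $k$, combined with the fact that $\{S_\alpha(k)\}_k$ and $\{S_\beta(k)\}_k$ are partitions of $Y$, forces $-m = n$, since $y$ lies in exactly one block of each partition and these blocks coincide. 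Hence $c_E(g) = c_E(g_\beta) + c_E(g_{\alpha^{-1}}) = n + m = 0$. As $g \in U_{\beta\alpha^{-1}}$ was arbitrary, we conclude $U_{\beta\alpha^{-1}} \subseteq c_E^{-1}(0)$, which is exactly the statement that $\beta\alpha^{-1} \in \llbracket \mathcal{H}_E \vert_Y \rrbracket$.

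I do not expect a serious obstacle here: the argument is essentially a sign-bookkeeping computation, and the only genuine subtlety is ensuring the two partitions are indexed compatibly, which is precisely where the partition property (each point lying in a unique block) is used. One could instead phrase the computation through $S_{\alpha^{-1}}$ via Lemma~\ref{lem:alfaSalfa}, but the direct factorization above seems cleanest and avoids introducing $\alpha^{-1}$'s partition explicitly.
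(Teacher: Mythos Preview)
Your proof is correct and follows essentially the same approach as the paper's: factor an arbitrary element of $U_{\beta\alpha^{-1}}$ through $U_\beta$ and $U_{\alpha^{-1}}$, read off the cocycle values at the common midpoint $y = \alpha^{-1}(x)$, and use the partition hypothesis to force cancellation. The only cosmetic difference is that the paper phrases the step ``$y \in S_\alpha(-m)$'' by first placing $x \in S_{\alpha^{-1}}(m)$ and then invoking Lemma~\ref{lem:alfaSalfa}, whereas you obtain it directly by inverting $g_{\alpha^{-1}}$---as you yourself note in your final paragraph.
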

\begin{proof}
We claim that because the graded partitions of $\alpha$ and $\beta$ are the same, we must have
\[S_{\beta \alpha^{-1}}(k) = \begin{cases}
Y & k = 0, \\
\emptyset & k \neq 0.
\end{cases} \]
And once we have this we immediately see that each element $g = (x,k,y) \in U_{\beta \alpha^{-1}}$ must have $k=0$, i.e.\ that $U_{\beta \alpha^{-1}} \subseteq c_E^{-1}(0)$.

To prove the claim, take an arbitrary point $y \in Y$. Then $y \in S_{\alpha^{-1}}(k)$ for some $k$. By Lemma~\ref{lem:alfaSalfa} we have $\alpha^{-1}(y) \in S_\alpha(-k) = S_\beta(-k)$. And then $g = (\alpha^{-1}(y), k, y) \in U_{\alpha^{-1}}$ and $h = (\beta \alpha^{-1}(y), -k, \alpha^{-1}(y)) \in U_\beta$. From this we get $h \cdot g = (\beta \alpha^{-1}(y) ,0 ,y)  \in U_{\beta \alpha^{-1}}$, hence $y \in S_{\beta \alpha^{-1}}(0)$, which proves the claim.
\end{proof}

The third lemma describes what happens to the graded partition of an element of the topological full group when we perturb it with a particular transposition.

\begin{lemma}\label{lem:Sbeta}
Let $E$ be a graph and let $Y \subseteq \mathcal{G}_E^{(0)} = \partial E$ be clopen. Let $V \subseteq \mathcal{G}_E \vert_Y$ be a compact bisection with disjoint source and range, and such that $V \subseteq c_E^{-1}(K)$ for some integer $K$. Let $\tau = \pi_{\widehat{V}} \in \llbracket  \mathcal{G}_E  \vert_Y \rrbracket$ be the associated transposition. If $\alpha  \in \llbracket  \mathcal{G}_E  \vert_Y \rrbracket$ satisfies $\supp(\alpha) = s(V)$, then $\supp(\tau \alpha \tau) = r(V)$ and $S_{\tau \alpha \tau}(k) = \tau(S_\alpha(k))$ for each $k \in \ZZ$.
\end{lemma}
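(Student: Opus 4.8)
The plan is to treat the two assertions separately, handling the support first and then the graded partitions by a careful cocycle bookkeeping along the product bisection $U_{\tau \alpha \tau} = U_\tau U_\alpha U_\tau$.

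For the support, I would simply use that $\tau$ is an involution (so $\tau^{-1} = \tau$) and that conjugation transports supports: for any homeomorphism one has $\supp(\tau \alpha \tau) = \tau(\supp(\alpha))$, since $\{x : \tau\alpha\tau(x) \neq x\} = \tau(\{x : \alpha(x) \neq x\})$ and $\tau$ commutes with closure. As $\supp(\alpha) = s(V)$ and $\tau = \pi_{\widehat{V}}$ satisfies $\tau(s(V)) = r(V)$ by the construction in Subsection~\ref{subsec:tfg}, this immediately yields $\supp(\tau \alpha \tau) = r(V)$.

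For the graded partitions I would set up notation: for each $x \in Y$ let $g_x$ be the unique element of $U_\tau = \widehat{V}$ with $s(g_x) = x$, so $g_x = (\tau(x), c_E(g_x), x)$. The first key observation is that $\widehat{V} = V \sqcup V^{-1} \sqcup (Y \setminus (r(V) \cup s(V)))$ is self-inverse, so $g_{\tau(x)} = g_x^{-1}$ and hence $c_E(g_{\tau(x)}) = -c_E(g_x)$. The second is that, since $V \subseteq c_E^{-1}(K)$, the value $c_E(g_\bullet)$ is locally constant: it equals $K$ on $s(V)$, equals $-K$ on $r(V)$, and equals $0$ on the remaining units. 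Because $\supp(\alpha) = s(V)$ is clopen and disjoint from $r(V)$, the homeomorphism $\alpha$ fixes $Y \setminus s(V)$ pointwise (in particular all of $r(V)$) and therefore maps the clopen set $s(V)$ bijectively onto itself; consequently $\alpha$ preserves each of the three pieces, giving $c_E(g_{\alpha(y)}) = c_E(g_y)$ for every $y \in Y$. I would then decompose the unique element of $U_{\tau \alpha \tau}$ with source $x$ as $g_1 g_2 g_3$, where $g_3 = g_x$, $g_2 \in U_\alpha$ is the element with source $\tau(x)$, and $g_1 = g_{\alpha(\tau(x))} \in U_\tau$. Its cocycle value is $c_E(g_1) + c_E(g_2) + c_E(g_3)$, and the two observations combine to give $c_E(g_1) + c_E(g_3) = c_E(g_{\alpha(\tau(x))}) + c_E(g_x) = c_E(g_{\tau(x)}) + c_E(g_x) = 0$. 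Thus the cocycle of the element at $x$ is exactly $c_E(g_2)$, the $c_E$-value of the $U_\alpha$-element at $\tau(x)$. Reading this back through Definition~\ref{def:gradPart} shows $x \in S_{\tau\alpha\tau}(k)$ iff $\tau(x) \in S_\alpha(k)$, that is $S_{\tau\alpha\tau}(k) = \tau^{-1}(S_\alpha(k)) = \tau(S_\alpha(k))$.

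The main obstacle I anticipate is the cancellation $c_E(g_1) + c_E(g_3) = 0$: everything hinges on the interplay between the self-inverse symmetry of $\widehat{V}$ and the fact that $\alpha$ preserves the decomposition $s(V) \sqcup r(V) \sqcup (Y \setminus (r(V) \cup s(V)))$. The latter uses $\supp(\alpha) = s(V)$ together with $s(V) \cap r(V) = \emptyset$ in an essential way, so I would be careful to justify $\alpha(s(V)) = s(V)$ explicitly (as $\alpha$ fixes the clopen complement of its support pointwise, it must permute $s(V)$).
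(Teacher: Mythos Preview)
Your proof is correct and, at its core, rests on the same idea as the paper's: the cocycle contributions from the two $\tau$-factors in $U_\tau U_\alpha U_\tau$ cancel. However, your execution is more uniform than the paper's. For the support, the paper argues directly by checking $\supp(\tau\alpha\tau) \subseteq r(V)$ and then uses a density argument for the reverse inclusion; your one-line ``conjugation transports supports'' is cleaner and entirely valid here since $\tau$ is a homeomorphism. For the graded partitions, the paper splits into cases ($x \in \supp(\alpha)$ versus $x \notin \supp(\alpha)$, with a further sub-case $x \in r(V)$) and in each case multiplies out explicit groupoid elements to read off the cocycle value; you instead isolate the single structural observation that $\alpha$ preserves the three-piece decomposition $s(V) \sqcup r(V) \sqcup (Y \setminus (s(V) \cup r(V)))$, whence $c_E(g_{\alpha(y)}) = c_E(g_y)$ uniformly, and the cancellation $c_E(g_1) + c_E(g_3) = 0$ follows in one stroke. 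Your approach buys a case-free argument; the paper's approach is perhaps more hands-on and makes the individual groupoid elements visible, which matches the style of the surrounding computations in Section~\ref{sec:indexmap}.
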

\begin{proof}
%A sketch of the situation in the lemma is provided in Figure~\ref{fig:lemma}.
%\begin{figure}[b] %[h]
%  \centering
%    \includegraphics[scale=0.5]{lemma86.png}
%      \caption{The situation in Lemma~\ref{lem:Sbeta}.}
%      \label{fig:lemma}
%\end{figure}
% Dont need this figure
We first take care of the support of $\tau \alpha \tau$. If $x \notin r(V)$, then $\tau(x) \notin s(V) = \supp(\alpha)$. From this we see that $\tau \alpha \tau$ fixes $x$ because
\[ \tau \alpha \tau(x) = \tau \alpha (\tau(x)) = \tau \tau (x) = x.    \]
This shows that $\supp(\tau \alpha \tau) \subseteq r(V)$. By definition, the set $\{x \in \partial E \mid \alpha(x) \neq x \}$ is dense in $\supp(\alpha) = s(V)$. And then $Z \coloneqq \{ \tau(x) \mid x \in \partial E \ \& \ \alpha(x) \neq x \}$ is dense in $r(V)$. Let~$y \in Z$ and set $x = \tau(y)$, so that $y =\tau(x)$ and $\alpha(x) \neq x$. Then we have
\[\tau( \alpha(\tau(y))) = \tau( \alpha(\tau^2(x))) = \tau( \alpha(x)) \neq \tau(x) = y.   \]
Hence $Z \subseteq \supp(\tau \alpha \tau)  \subseteq r(V)$, and so by the density of $Z$ we get $\supp(\tau \alpha \tau) = r(V)$ as desired.

We now turn to the second statement. Let $x \in S_\alpha(k)$. Then~${(\alpha(x), k, x) \in U_\alpha}$. Consider first the case $x \in \supp(\alpha) = s(V)$. It is clear from the assumptions on~$V$ that we have $S_\tau(K) = s(V)$, $S_\tau(-K) = r(V)$ and that the rest is concentrated in~$S_\tau(0)$. Thus both $x$ and $\alpha(x)$ lie in $S_\tau(K)$. This means that $(\tau(x), K, x) \in U_\tau$ and that ${(\tau \alpha(x), K, \alpha(x)) \in U_\tau}$. Since $\tau = \tau^{-1}$ we also have $(\tau(x), K, x)^{-1} = (x,-K,\tau(x)) \in U_\tau$. Multiplying these together we obtain 
\[(\tau \alpha(x), K, \alpha(x)) \cdot (\alpha(x), k, x)  \cdot (x,-K,\tau(x)) = (\tau \alpha(x), k, \tau(x)) \in U_{\tau \alpha \tau},    \]
which shows precisely that $\tau(x) \in S_{\tau \alpha \tau}(k)$.

Lastly consider the case when $x \notin \supp(\alpha)$. Then we must have $k = 0$, and since~${\alpha(x) = x}$, we have $(x,0,x) \in U_\alpha$. If $x$ is not in the support of $\tau$ either (i.e.~${x \notin r(V)}$), then $\tau(x) = x \in S_{\tau \alpha \tau}(0)$ as desired. The final possibility is that $x \in r(V) = S_\tau(-K)$, and then $(\tau(x), -K, x) \in U_\tau$ and $(x, K, \tau(x)) \in U_\tau$. Multiplying these gives
\[ (\tau(x), -K, x)  \cdot (x,0,x)  \cdot (x, K, \tau(x)) = (\tau(x), 0, \tau(x)) \in U_{\tau \alpha \tau},    \]
hence $\tau(x) \in S_{\tau \alpha \tau}(0)$. 

We have shown that $\tau(S_\alpha(k)) \subseteq S_{\tau \alpha \tau}(k)$ for all $k$, but since both the $S_\alpha(k)$'s and the~$S_{\tau \alpha \tau}(k)$'s are partitions, we must actually have equality. This finishes the proof.
\end{proof}

\subsection{Identifying $I(\alpha)$}
Let us now turn to describing the image of the index map. Recall the homomorphism $\varphi \colon H_0(\mathcal{H}_E) \to H_0(\mathcal{H}_E)$ from Definition~\ref{def:delta}, where we view~$H_0(\mathcal{H}_E)$ as a subgroup of $H_0(\G_E\times_{c_E} \ZZ)$. For $n \in \NN$ its iterates are given by 
\[\varphi^n\left(\left[1_{Z(\mu) \times \{0\}}\right]\right) = \left[1_{Z(\mu) \times \{n\}}\right] =  \left[1_{Z(\nu \mu) \times \{0\}}\right],   \]
where $\nu$ is any path of length $n$ in $E$ terminating in $s(\mu)$. For any path~$\mu$ in~$E$ of length at least $n$ the iterated inverses are also defined, and they are given by 
\[\varphi^{-n}\left(\left[1_{Z(\mu) \times \{0\}}\right]\right) = \left[1_{Z(\mu) \times \{-n\}}\right] = \left[1_{Z(\sigma_E^n(\mu)) \times \{0\}}\right].   \]
In the setting of Definition~\ref{def:gradPart} we can write $U_\alpha \cap c_E^{-1}(k) =  \bigsqcup_{j=1}^{J_k} Z(\mu_j, F_j, \nu_j)$, where for each $j$, ${\vert \mu_j \vert - \vert \nu_j \vert = k}$. When $k < 0$ this entails that $\vert \nu_j \vert \geq \vert k \vert$. Since we have that~${S_\alpha(k) = s\left(U_\alpha \cap c_E^{-1}(k) \right) = \bigsqcup_{j=1}^{J_k} Z(\nu_j \setminus F_j)}$, the negative powers $\varphi^i$ are then defined on the associated characteristic functions for~${ - \vert k \vert \leq i \leq -1}$ and we have
\begin{equation}\label{eq:g}
\varphi^i \left( \left[ 1_{S_\alpha(k) \times \{0\}}    \right]\right) =  \left[ 1_{S_\alpha(k) \times \{i\}}    \right].
\end{equation}
For $k \geq 0$ and $i \geq 0$ Equation~\eqref{eq:g} clearly holds as well. For $i = k$ we furthermore have
\begin{equation}\label{eq:alpha}
\varphi^k \left( \left[ 1_{S_\alpha(k) \times \{0\}}    \right]\right) = \left[ 1_{\alpha(S_\alpha(k)) \times \{0\}}    \right].
\end{equation}

\begin{definition}
For $k \in \ZZ$ we define the following expression
\[\varphi^{(k)} \coloneqq   \begin{cases}
 -(\id + \varphi + \cdots + \varphi^{k-1}) & k > 0, \\
0 & k = 0, \\
 \varphi^{-1} + \varphi^{-2} + \cdots + \varphi^{k} & k < 0.
\end{cases} \]
\end{definition}

The definition above is somewhat formal in the sense that for $k < 0$ it is only defined on certain elements. However, we will only apply the negative powers as in Equation~\eqref{eq:g} where they are indeed defined. Observe that formally we have
\begin{equation}\label{eq:formal}
(\id - \varphi) \circ \varphi^{(k)} = \varphi^k - \id.
\end{equation}

Let us now show how an element $\alpha \in \llbracket \mathcal{G}_E \rrbracket$ gives rise to an element of $\ker(\id - \varphi)$ as on page~61 of~\cite{MatTFG}. Assume for simplicity that $E^0$ is finite, so that $S_\alpha(0)$ is compact.  Since both the $S_\alpha(k)$'s and $\alpha(S_\alpha(k))$'s form partitions of $\partial E$ we obtain the following using~\eqref{eq:alpha}
\[ \left[1_{\partial E} \right] =   \sum_{k \in \ZZ} \left[ 1_{S_\alpha(k) \times \{0\}}    \right] = \sum_{k \in \ZZ} \left[ 1_{\alpha(S_\alpha(k)) \times \{0\}}    \right] =  \sum_{k \in \ZZ} \varphi^k \left( \left[ 1_{S_\alpha(k) \times \{0\}}    \right]\right).\]
Subtracting these using~\eqref{eq:formal} we get 
\[ \sum_{k \in \ZZ} (\varphi^k - \id)\left( \left[ 1_{S_\alpha(k) \times \{0\}}    \right] \right) = (\id - \varphi) \left( \sum_{k \in \ZZ} \varphi^{(k)} \left( \left[ 1_{S_\alpha(k) \times \{0\}}    \right] \right) \right) = 0,  \]
which shows that $\sum_{k \in \ZZ} \varphi^{(k)} \left( \left[1_{S_\alpha(k) \times \{0\}} \right] \right) \in \ker(\id - \varphi)$. Analogously to Lemma~6.8 in~\cite{MatTFG} we will see that this is precisely the element to which $I(\alpha)$ corresponds. 

\begin{lemma}\label{lem:indeximage}
Let $E$ be an essential graph and let $\alpha = \pi_U \in \llbracket \mathcal{G}_E \rrbracket$. Under the identification $H_1(\mathcal{G}_E)  \cong \ker(\id - \varphi)$, the element $I(\alpha) \in H_1(\mathcal{G}_E)$ corresponds to
\[\sum_{k \in \ZZ} \varphi^{(k)} \left( \left[1_{S_\alpha(k) \times \{0\}} \right] \right) \in \ker(\id - \varphi) \leq H_0(\mathcal{H}_E). \] 
\end{lemma}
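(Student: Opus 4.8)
The identification $H_1(\mathcal{G}_E) \cong \ker(\id - \varphi)$ is precisely the injective connecting homomorphism $\partial_1 \colon H_1(\mathcal{G}_E) \hookrightarrow H_0(\G_E\times_{c_E}\ZZ)$ from the exact sequence~\eqref{eq:4exact}, whose image equals $\ker(\id - H_0(\rho_\bullet)) = \ker(\id - \varphi)$ by Lemma~\ref{lem:kerker}. So the plan is to compute $\partial_1(I(\alpha))$ directly and match it with the asserted element. To this end I would unwind the standard zig-zag defining $\partial_1$ from the short exact sequence of complexes $0 \to C_\bullet(\G_E\times_{c_E}\ZZ) \xrightarrow{\id - \rho_\bullet} C_\bullet(\G_E\times_{c_E}\ZZ) \xrightarrow{\pi_\bullet} C_\bullet(\mathcal{G}_E) \to 0$ underlying Proposition~\ref{prop:SES1}.

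Recall that $I(\alpha) = [1_{U^\perp}]$, where $U = U_\alpha$ and $U^\perp$ is the compact part of $U$ with $s(U^\perp) = r(U^\perp) = \supp(\alpha)$. First I would lift the cycle $1_{U^\perp} \in \ker \delta_1$ along $\pi_\bullet$; the obvious lift is $\tilde z \coloneqq 1_{U^\perp \times \{0\}}$, which satisfies $\pi_\bullet(\tilde z) = 1_{U^\perp}$. Next I would apply $\delta_1 = s_* - r_*$ in the skew product. Since $s(g,0) = (s(g), c_E(g))$ and $r(g,0) = (r(g),0)$, decomposing $U^\perp$ by the value of $c_E$ and writing $S'_k \coloneqq s(U^\perp \cap c_E^{-1}(k))$ (so that $S'_k = S_\alpha(k)$ for $k \neq 0$ and $\{S'_k\}_{k}$ partitions $\supp(\alpha)$), Lemma~\ref{lem:alfaSalfa} yields
\[ \delta_1(\tilde z) = \sum_{k \in \ZZ} 1_{S'_k \times \{k\}} - \sum_{k \in \ZZ} 1_{\alpha(S'_k) \times \{0\}}. \]

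The crux is then to exhibit $\delta_1(\tilde z)$ as a single telescoping term $(\id - \rho_0)(w)$. Applying Lemma~\ref{lem:SES1} to each $S'_k$ produces $\eta_k$ with $1_{S'_k \times \{k\}} - 1_{S'_k \times \{0\}} = (\id - \rho_0)(\eta_k)$, where $\eta_0 = 0$ and, for $k \neq 0$, $\eta_k$ is the signed sum of level-shifted copies of $1_{S_\alpha(k)}$ that matches $\varphi^{(k)}$ term by term. Substituting, and using that both $\sum_k 1_{S'_k \times \{0\}}$ and $\sum_k 1_{\alpha(S'_k) \times \{0\}}$ equal $1_{\supp(\alpha) \times \{0\}}$ (since $\{S'_k\}_k$ partitions $\supp(\alpha)$ and $\alpha$ permutes $\supp(\alpha)$), the two level-$0$ families cancel and leave $\delta_1(\tilde z) = (\id - \rho_0)\big( \sum_k \eta_k \big)$. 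By the definition of the connecting map this gives $\partial_1(I(\alpha)) = \big[ \sum_k \eta_k \big] = \sum_{k \neq 0} [\eta_k]$. Finally, comparing with the definition of $\varphi^{(k)}$ and invoking~\eqref{eq:g} (which identifies $[1_{S_\alpha(k) \times \{i\}}]$ with $\varphi^i([1_{S_\alpha(k) \times \{0\}}])$ for the relevant $i$), one checks that $[\eta_k] = \varphi^{(k)}([1_{S_\alpha(k) \times \{0\}}])$ for each $k$, which is exactly the asserted element of $\ker(\id - \varphi)$.

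I expect the main difficulty to be essentially careful bookkeeping of two kinds. The first is getting the direction and signs of the zig-zag to agree precisely with the sign convention built into $\varphi^{(k)}$ (cf.\ Remark~\ref{rem:MatuiDelta}, where $\varphi = \delta^{-1}$). The second is keeping track of compact supports when $\partial E$ is non-compact: this is exactly why one must work with $U^\perp$ rather than $U$, since the possibly non-compact set $S_\alpha(0)$ never contributes to the final sum (as $\eta_0 = 0$), and the cancellation of the two level-$0$ families is what removes the one place where a non-compactly-supported function would otherwise intrude.
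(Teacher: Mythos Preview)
Your proposal is correct and follows essentially the same approach as the paper: unwind the snake-lemma zig-zag for $\partial_1$ by lifting $1_{U^\perp}$ to $1_{U^\perp \times \{0\}}$, compute $\delta_1$ there, use Lemma~\ref{lem:SES1} to find the preimage under $\id-\rho_0$, and then identify the resulting class with $\sum_k \varphi^{(k)}([1_{S_\alpha(k)\times\{0\}}])$ via~\eqref{eq:g}. The only cosmetic differences are that the paper first treats the case $E^0$ finite (working with $U$ rather than $U^\perp$) before remarking on the non-compact case, and that the identity $r(U_k)=\alpha(S_\alpha(k))$ follows directly from $\alpha=\pi_U$ rather than from Lemma~\ref{lem:alfaSalfa}.
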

\begin{proof}
The identification
\(H_1(\mathcal{G}_E)  \cong \ker(\id - H_0(\rho_\bullet))\) from Proposition~\ref{prop:H01} is implemented by the (injective) connecting homomorphism $\partial_1 \colon H_1(\mathcal{G}_E) \to H_0(\mathcal{G}_E \times_{c_E} \ZZ)$ from the exact sequence~\eqref{eq:4exact}. Since $\ker(\id - \varphi) = \ker(\id - H_0(\rho_\bullet)) = \im(\partial_1)$ as subsets of $H_0(\mathcal{G}_E \times_{c_E} \ZZ)$, it suffices to compute $\partial_1(I(\alpha)) \in H_0(\mathcal{G}_E \times_{c_E} \ZZ)$. We will do this by stepwise going through the definition of $\partial_1$ in terms of the Snake Lemma applied to the diagram in Figure~\ref{fig:snake}. To save space we have shortened $C_c(\mathcal{G}, \ZZ)$ to $C_c(\mathcal{G})$ and $\mathcal{G}_E \times_{c_E} \ZZ$ to $\mathcal{G}_E \times \ZZ$. The maps~$\widetilde{\delta_1}$ in Figure~\ref{fig:snake} are given by $\widetilde{\delta_1}(f + \im(\delta_2)) = \delta_1(f)$. The top and bottom rows are the kernels and cokernels of the $\widetilde{\delta_1}$'s, respectively.
% The Snake Lemma
% Author: Andrew Stacey
% Source: http://tex.stackexchange.com/questions/3892/
\begin{figure}%[h]
\[ \begin{tikzpicture}%[>=triangle 60] % vanlige piler...
  \matrix[matrix of math nodes,column sep={100pt,between origins},row
    sep={60pt,between origins},nodes={asymmetrical rectangle}] (s)
  {
    &|[name=ka]| 0 &|[name=kb]| 0 &|[name=kc]| H_1(\mathcal{G}_E) \\
    &|[name=A]| \frac{C_c(\mathcal{G}_E \times \ZZ)}{\im(\delta_2)} &|[name=B]| \frac{C_c(\mathcal{G}_E \times \ZZ)}{\im(\delta_2)} &|[name=C]| \frac{C_c(\mathcal{G}_E)}{\im(\delta_2)} &[-5em] |[name=01]| 0 \\
    |[name=02]| 0 &  |[name=A']| C_c(\partial E \times \ZZ) &|[name=B']| C_c(\partial E \times \ZZ) &|[name=C']| C_c(\partial E) \\
    &|[name=ca]| H_0(\mathcal{G}_E \times \ZZ) &|[name=cb]| H_0(\mathcal{G}_E \times \ZZ) &|[name=cc]| H_0(\mathcal{G}_E)  \\
  };
  \draw[->] (ka) edge (A)
            (kb) edge (B)
            (kc) edge[red] (C)
            (A) edge node[auto] {\tiny \( \id - \rho_1 / \im(\delta_2)  \)}      (B)				%{ \( \frac{\id - \rho_1}{\im(\delta_2)}  \)}      (B)
            (B) edge[red] node[auto, red]   {\tiny \( \pi_1 / \im(\delta_2)  \)} (C)   %{ \( \frac{\pi_1}{\im(\delta_2)}  \)} (C)
            (C) edge (01)
            (A) edge node[auto] {\tiny \(\widetilde{\delta_1}\)} (A')
            (B) edge[red] node[auto] {\tiny \(\widetilde{\delta_1}\)} (B')
            (C) edge node[auto] {\tiny \(\widetilde{\delta_1}\)} (C')
            (02) edge (A')
            (A') edge[red] node[auto] {\tiny \(\id - \rho_0\)} (B')
            (B') edge node[auto] {\tiny \(  \pi_0   \)} (C')
            (A') edge[red] (ca)
            (B') edge (cb)
            (C') edge (cc)
  ;
  \draw[->] (ka) edge[dashed] (kb)
                 (kb) edge[dashed] (kc)
                 (ca) edge[dashed] (cb)
                 (cb) edge[dashed] (cc)
  ;
  \draw[->,dashed,rounded corners,red] (kc) -| node[auto,text=black,pos=.7]
    {\color{red}{\(\partial_1\)}} ($(01.east)+(.5,0)$) |- ($(B)!.35!(B')$) -|
    ($(02.west)+(-.5,0)$) |- (ca);
\end{tikzpicture} \]
      \caption{The connecting homomorphism $\partial_1$ from the exact sequence~\eqref{eq:4exact}.}
      \label{fig:snake} % fig: instead...
\end{figure} %% STOP FIG

We first treat the case when $E^0$ is finite, for then $U$ and $S_\alpha(0)$ are both compact. We start with $\alpha = \pi_U \in \llbracket \mathcal{G}_E \rrbracket$ and look at $I(\alpha) = [1_{U}] \in H_1(\mathcal{G}_E)$. Now view $1_U + \im(\delta_2)$ as an element of $C_c(\mathcal{G}_E) / \im(\delta_2)$ (recall that $\delta_1(1_U) = 0$). A lift of this element by $\pi_1 / \im(\delta_2)$ is given by the element $h \coloneqq 1_{U \times \{0\}} \in C_c(\mathcal{G}_E \times_{c_E} \ZZ)$, since $\pi_1(h) = 1_U$. At this point we have $h + \im(\delta_2) \in C_c(\mathcal{G}_E \times_{c_E} \ZZ) / \im(\delta_2)$. Before applying $\widetilde{\delta_1}$, we partition the full bisection $U$ defining $\alpha$ in terms of its values under the cocycle $c_E$:
\[U = \bigsqcup_{k=-N}^N U_k, \text{ where } U_k = U \cap c_E^{-1}(k), \]
so that $s\left(U_k \right) = S_\alpha(k)$. Note that
\begin{equation}\label{eq:unitspace}
1_{\partial E \times \{0\}} = \sum_{k=-N}^N  1_{s(U_k) \times \{0\} } = \sum_{k=-N}^N  1_{r(U_k) \times \{0\} }.
\end{equation}
Using this we compute
% XXX Try to make it appear on the page above so we don't get these ugly spacing issues...
%\begin{align*} 
% \widetilde{\delta_1}(h + \im(\delta_2)) &= \delta_1(h) = \sum_{k=-N}^N \delta_1(1_{U_k \times \{0\}} )  =  \sum_{k=-N}^N \left( s_\ast(1_{U_k \times \{0\}})   -   r_\ast(1_{U_k \times \{0\}})   \right) \\ 
% &=   \sum_{k=-N}^N \left( 1_{s(U_k \times \{0\} )}   -   1_{r(U_k \times \{0\})}   \right) =   \sum_{k=-N}^N \left( 1_{s(U_k) \times \{k\} }   -   1_{r(U_k) \times \{0\}}   \right) \\
%&= \sum_{k=-N}^N \left( 1_{s(U_k) \times \{k\} }   -   1_{s(U_k) \times \{0\}}   \right) 
% = \sum_{k=-N}^{N} \left( 1_{S_\alpha(k) \times \{k\} }   -   1_{S_\alpha(k) \times \{0\}}   \right).
%\end{align*}
% Previous: 
\begin{align*} 
\widetilde{\delta_1}(h + \im(\delta_2)) &= \delta_1(h) = \delta_1(1_{U \times \{0\}} ) = \sum_{k=-N}^N \delta_1(1_{U_k \times \{0\}} ) \\ 
&=  \sum_{k=-N}^N \left( s_\ast(1_{U_k \times \{0\}})   -   r_\ast(1_{U_k \times \{0\}})   \right)
 =   \sum_{k=-N}^N \left( 1_{s(U_k \times \{0\} )}   -   1_{r(U_k \times \{0\})}   \right) \\ 
 &=   \sum_{k=-N}^N \left( 1_{s(U_k) \times \{k\} }   -   1_{r(U_k) \times \{0\}}   \right) 
= \sum_{k=-N}^N \left( 1_{s(U_k) \times \{k\} }   -   1_{s(U_k) \times \{0\}}   \right) \\
 &= \sum_{k=-N}^{-1} \left( 1_{S_\alpha(k) \times \{k\} }   -   1_{S_\alpha(k) \times \{0\}}   \right) + \sum_{k=1}^{N} \left( 1_{S_\alpha(k) \times \{k\} }   -   1_{S_\alpha(k) \times \{0\}}   \right).
\end{align*}
%We already know that $\varphi_1(h) \in \ker(\pi_0)$, but we can also verify it easily by observing that % NO, NOT SPOSED TO HAVE THIS?!?
%\begin{align*}
%\pi_0(\varphi_1(h)) &= \sum_{k=-N}^N \left( \pi_0(1_{s(U_k) \times \{k\} })   -   \pi_0(1_{r(U_k) \times \{0\}}   \right))  \\
%& = \sum_{k=-N}^N \left( 1_{s(U_k)}   -   1_{r(U_k)}   \right)  = 1_{s(U)} - 1_{r(U)} = 1_{\partial E} - 1_{\partial E}  = 0.
%\end{align*}
The next step is to find the unique lift of $\delta_1(h)$ by $\id - \rho_0$. Applying Lemma~\ref{lem:SES1} to each term in the sum above we see that this lift is 
\[g \coloneqq        \sum_{k=-N}^{-1} \sum_{i= k}^{-1} 1_{S_\alpha(k) \times \{i\}}    -       \sum_{k=1}^{N} \sum_{i=0}^{k-1} 1_{S_\alpha(k) \times \{i\}}            \in C_c(\partial E \times \ZZ, \ZZ).   \]
The final step is to map the element $g$ ``downwards'' into the cokernel of $\widetilde{\delta_1}$, which is precisely $H_0(\mathcal{G}_E \times_{c_E} \ZZ)$. Using  Equation~\eqref{eq:g} we find
\begin{align*}
\partial_1(I(\alpha)) &= \partial_1([1_U]) = g + \im(\widetilde{\delta_1}) = [g] \\
& = \sum_{k=-N}^{-1} \sum_{i= k}^{-1} \left[ 1_{S_\alpha(k) \times \{i\}}   \right] -       \sum_{k=1}^{N} \sum_{i=0}^{k-1} \left[ 1_{S_\alpha(k) \times \{i\}}    \right]     \\
& = \sum_{k=-N}^{-1} \sum_{i=  k}^{-1} \varphi^i \left(  \left[ 1_{S_\alpha(k) \times \{0\}}   \right] \right)    -       \sum_{k=1}^{N} \sum_{i=0}^{k-1}  \varphi^i \left(  \left[ 1_{S_\alpha(k) \times \{0\}}   \right] \right)    \\
& = \sum_{k \in \ZZ} \varphi^{(k)} \left( \left[1_{S_\alpha(k) \times \{0\}} \right] \right)
\end{align*}

In the case that $E^0$ is infinite, the proof above remains valid if we simply replace~$U$ with~$U^\perp$ from Subsection~4.1, (as this makes all indicator functions above remain compactly supported) and replace~$\partial E$ with $\supp(\pi_U)$ in Equation~\eqref{eq:unitspace}.
\end{proof}

We emphasize that the sum in the lemma above really is a finite sum. Since we are aiming to establish Property~TR for restrictions of graph groupoids, we need to verify that the description of the index map as above also works in this case. 

\begin{corollary}\label{cor:indeximage}
Let $E$ be an essential graph and let~$Y \subseteq \partial E$ be clopen and $\mathcal{G}_E$-full. Then the element~$I(\alpha) \in H_1\left(\mathcal{G}_E  \vert_Y\right)$ for $\alpha \in \llbracket \mathcal{G}_E  \vert_Y \rrbracket$ corresponds to
 \[\sum_{k \in \ZZ} \varphi^{(k)} \left( \left[1_{S_\alpha(k) \times \{0\}} \right] \right) \in \ker(\id - \varphi) \leq H_0(\mathcal{H}_E) \]
 under the identification $\smash H_1\left(\mathcal{G}_E  \vert_Y\right)  \cong H_1(\mathcal{G}_E) \cong \ker(\id - \varphi)$, and the $S_\alpha(k)$'s form a finite clopen partition of $Y$. 
\end{corollary}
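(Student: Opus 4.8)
The plan is to reduce the statement to Lemma~\ref{lem:indeximage} by extending $\alpha$ to an element of the full graph groupoid $\mathcal{G}_E$ and then invoking the naturality of the index map under the inclusion $\iota \colon \mathcal{G}_E \vert_Y \hookrightarrow \mathcal{G}_E$. I would begin by recording that the graded partition behaves exactly as in the unrestricted case: since $\supp(\alpha) \subseteq Y$ is compact, each $S_\alpha(k)$ is clopen, only finitely many are non-empty, and together they partition $Y$. This is the verbatim argument following Definition~\ref{def:gradPart}, with $\partial E$ replaced by the new unit space $Y$.

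Next, writing $\alpha = \pi_U$ with $U = U^\perp \sqcup (Y \setminus \supp(\alpha))$ as in the extended index map of Section~\ref{sec:AH}, I would extend $\alpha$ to $\tilde\alpha \in \llbracket \mathcal{G}_E \rrbracket$ by declaring it the identity off $Y$. Since $Y$ is clopen, $\mathcal{G}_E \vert_Y$ is an open subgroupoid of $\mathcal{G}_E$, so $U^\perp$ is a compact bisection of $\mathcal{G}_E$ and $\tilde U \coloneqq U^\perp \sqcup (\partial E \setminus \supp(\alpha))$ is a full bisection of $\mathcal{G}_E$ with compact support $\supp(\alpha)$; I set $\tilde\alpha \coloneqq \pi_{\tilde U}$. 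Because the nontrivial dynamics of $\tilde\alpha$ and $\alpha$ coincide, we have $\supp(\tilde\alpha) = \supp(\alpha)$ and $S_{\tilde\alpha}(k) = S_\alpha(k)$ for every $k \neq 0$; the two graded partitions can only differ in degree $0$, where $S_{\tilde\alpha}(0) = S_\alpha(0) \sqcup (\partial E \setminus Y)$. Applying Lemma~\ref{lem:indeximage} to $\tilde\alpha$ and using that $\varphi^{(0)} = 0$ annihilates the degree-$0$ term, I obtain
\[ I(\tilde\alpha) = \sum_{k \in \ZZ} \varphi^{(k)}\!\left( \left[1_{S_{\tilde\alpha}(k) \times \{0\}}\right] \right) = \sum_{k \in \ZZ} \varphi^{(k)}\!\left( \left[1_{S_\alpha(k) \times \{0\}}\right] \right) \in \ker(\id - \varphi). \]

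Finally I would verify that $I(\alpha)$ and $I(\tilde\alpha)$ correspond under the isomorphism $H_1(\iota_\bullet) \colon H_1(\mathcal{G}_E \vert_Y) \xrightarrow{\cong} H_1(\mathcal{G}_E)$, which exists by $\mathcal{G}_E$-fullness of $Y$. By the extended definition of the index map, $I(\alpha) = \left[1_{U^\perp}\right] \in H_1(\mathcal{G}_E \vert_Y)$ and $I(\tilde\alpha) = \left[1_{U^\perp}\right] \in H_1(\mathcal{G}_E)$, as $\tilde\alpha$ is built from the very same compact bisection $U^\perp$. Since $H_1(\iota_\bullet)$ acts by $\left[1_W\right] \mapsto \left[1_W\right]$ on compact open $W \subseteq \mathcal{G}_E \vert_Y$, it carries $I(\alpha)$ to $I(\tilde\alpha)$. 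Composing with the identification $H_1(\mathcal{G}_E) \cong \ker(\id - \varphi)$ from Proposition~\ref{prop:H01} and Lemma~\ref{lem:kerker} then yields the asserted description of $I(\alpha)$.

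The step I expect to require the most care is the last one: making precise that the extended index map of Section~\ref{sec:AH} is natural with respect to the inclusion of a full clopen restriction, i.e.\ that $I(\alpha)$ and $I(\tilde\alpha)$ are genuinely the classes of the \emph{same} bisection $U^\perp$ read in the two homology groups. Everything else is a direct transcription of the graded-partition bookkeeping already carried out for $\mathcal{G}_E$, combined with the harmless observation that the degree-$0$ discrepancy between $\alpha$ and $\tilde\alpha$ is killed by $\varphi^{(0)} = 0$.
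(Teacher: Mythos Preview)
Your proposal is correct and follows essentially the same approach as the paper: extend $\alpha$ trivially to $\tilde\alpha \in \llbracket \mathcal{G}_E \rrbracket$, observe that the graded partitions agree for $k \neq 0$ while $\varphi^{(0)} = 0$ kills the discrepancy at $k=0$, apply Lemma~\ref{lem:indeximage} to $\tilde\alpha$, and check that $H_1(\iota_\bullet)$ carries $I(\alpha)$ to $I(\tilde\alpha)$ because both are the class of the same compact bisection $U^\perp$. The paper packages the last step as a commuting square of index maps and homology isomorphisms, and verifies it via the equality $(\widetilde{U})^\perp = U^\perp$, which is exactly the point you flagged as requiring the most care.
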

\begin{proof}
The inclusion $ \mathcal{G}_E  \vert_Y \hookrightarrow \mathcal{G}_E$ induces an isomorphism in homology due to the fullness of $Y$. We also have a canonical inclusion $\llbracket \mathcal{G}_E  \vert_Y \rrbracket \hookrightarrow \llbracket \mathcal{G}_E \rrbracket$ given by $\pi_U \mapsto \pi_{\widetilde{U}}$, where ~${\widetilde{U} = U \sqcup \partial E \setminus Y}$ for~$U \subseteq \mathcal{G}_E  \vert_Y$ a full bisection. In words, $\pi_{\widetilde{U}}$ simply extends $\pi_U$ trivially to the identity on $\partial E \setminus Y$. Together with the respective index maps, we claim that we from this get a commutative diagram as follows:
\[ \begin{tikzcd}
\llbracket \mathcal{G}_E  \vert_Y \rrbracket  \arrow{r}{I} \arrow[hook]{d}   &   H_1\left(\mathcal{G}_E  \vert_Y\right) \arrow{d}{\cong} \\ 
\llbracket \mathcal{G}_E \rrbracket  \arrow{r}{I}   &     H_1(\mathcal{G}_E)
\end{tikzcd} \]
To see that the diagram commutes, let $\alpha = \pi_U \in \llbracket \mathcal{G}_E  \vert_Y \rrbracket$ be given, where $U \subseteq \mathcal{G}_E  \vert_Y$ is a full bisection. The two paths in the diagram result in $\alpha \mapsto [1_{(\widetilde{U})^\perp}] \in H_1(\mathcal{G}_E)$ and~${\alpha \mapsto [1_{U^\perp}] \in H_1(\mathcal{G}_E)}$, respectively. But these elements are the same since the sets~$(\widetilde{U})^\perp$ and~$U^\perp$ are actually equal.

Let $\widetilde{\alpha} = \pi_{\widetilde{U}}$ denote the trivial extension of $\alpha$. Then $S_\alpha(k) = S_{\widetilde{\alpha}}(k)$ for all $k \neq 0$. Recall that $\varphi^{(0)} = 0$, so $k=0$ does not contribute. Appealing to Lemma~\ref{lem:indeximage} we obtain 
\[I(\alpha) \longleftrightarrow I(\widetilde{\alpha}) \longleftrightarrow \sum_{k \in \ZZ} \varphi^{(k)} \left( \left[1_{S_{\widetilde{\alpha}}(k) \times \{0\}} \right] \right) = \sum_{k \in \ZZ} \varphi^{(k)} \left( \left[1_{S_\alpha(k) \times \{0\}} \right] \right), \]
under the correspondence $\smash H_1\left(\mathcal{G}_E  \vert_Y\right)  \cong H_1(\mathcal{G}_E) \cong \ker(\id - \varphi) \leq H_0(\mathcal{H}_E)$.
\end{proof}

\begin{remark}\label{rem:deltadifference}
For finite graphs one might expect all formulas in the present paper to recover those in~\cite[Section~6]{MatTFG}  after substituting $\varphi = \delta^{-1}$, since one has that~${\ker(\id - \delta) = \ker(\id - \delta^{-1})}$ as sets. However, a small difference already appears in Corollary~\ref{cor:indeximage} when compared to~\cite[Lemma~6.8]{MatTFG}, which will propagate in the sequel. After substituting $\varphi = \delta^{-1}$, the $k$'th term (for $k \neq 0$) in Corollary~\ref{cor:indeximage} becomes 
\[\varphi^{(k)} =   \begin{cases}
 -(\id + \delta^{-1} + \cdots + \delta^{1-k}) & k > 0, \\
 \delta + \delta^{2} + \cdots + \delta^{ \vert k \vert } & k < 0,
\end{cases} \]
whereas the $k$'th term in~\cite[Lemma~6.8]{MatTFG} is
\[\delta^{(-k)} =   \begin{cases}
 -(\delta^{-1} + \delta^{-2}  \cdots + \delta^{-k}) & k > 0, \\
 \id + \delta + \cdots + \delta^{ \vert k \vert -1 } & k < 0.
\end{cases} \]
The reason these are different is because identifying $H_1(\mathcal{G}_E)$ with $\ker(\id - \delta)$ instead of~$\ker(\id - \delta^{-1})$ give different lifts of the element $\delta_1(h)$ in the proof of Lemma~\ref{lem:indeximage}. 
\end{remark}

\section{Establishing Property~TR}\label{sec:TR}

We are by now almost ready to prove that restrictions of graph groupoids have Property~TR. Given what we have established so far, our proof will in broad strokes follow the proof  of~\cite[Lemma~6.10]{MatTFG} using the endomorphism $\varphi$ instead of the automorphism~$\delta$ mentioned in Remark~\ref{rem:MatuiDelta}. However, there is another major difference, which we discuss below.  

What is actually proved in~\cite[Lemma~6.10]{MatTFG}  is that if the adjacency matrix~$A_E$ of a finite graph $E$ is \emph{primitive}\footnote{Meaning that for some $n \in \NN$ all entries in $(A_E)^n$ are strictly positive.}, then any restriction of $\mathcal{G}_E$ has Property~TR\footnote{At the beginning of the proof of~\cite[Theorem~6.11]{MatTFG} it is noted that the graph groupoid of a strongly connected finite graph is always Kakutani equivalent to graph groupoid whose adjacency matrix is primitive, from which it follows that restrictions of the former also have Property~TR.}. One reason why primitivity of the adjacency matrix is so useful is that this matrix then has a (strictly dominant) Perron eigenvalue $\lambda > 1$. Another is  that the AF-groupoid~$\mathcal{H}_E$ becomes minimal. This is if and only if, in fact, and also equivalent to the shift of finite type determined by $A_E$ being topologically mixing. In this case the infinite path space~$E^\infty$ admits exactly one $\mathcal{H}_E$-invariant probability measure. This measure, lets denote it by $\omega$, satisfies $\omega(s(U)) = \lambda \ \omega(rU))$ for any compact bisection $U \subseteq \mathcal{G}_E$ with~${U \subseteq c_E^{-1}(1)}$. This then allows one to compare clopen subsets and the image of the class of their characteristic functions under the automorphism $\delta$ and from this obtain bisections connecting them using~\cite[Lemma~6.7]{MatHom}. The approach in~\cite{MatTFG} was subsequently generalized to an abstract setting in~\cite[Proposition~4.5~(2)]{MatProd}.

In the setting of the present paper, however, where we allow infinite emitters in the graphs, we are no longer dealing with a shift of finite type (or any shift space for that matter), nor do we have a Perron eigenvalue. Neither is the AF-groupoid $\mathcal{H}_E$ ever minimal (see Remark~\ref{rem:fullness}). So the aforementioned~\cite[Proposition~4.5~(2)]{MatProd} does not apply. We replace the notion of primitivity (or mixing) by the technical Lemma~\ref{lem:longlemma} below. It prescribes necessary conditions on a graph~$E$ to guarantee the existence of certain disjoint paths in $E$ from which we can explicitly define sets with similar properties as the sets $C_{n,i}$ and $D_{n,j}$ which are constructed using the invariant measure $\omega$ in~\cite[Lemma~6.10]{MatTFG}. A key point is that these necessary conditions can always be arranged, without changing the isomorphism class of the groupoid, as demonstrated in Lemma~\ref{lem:moveT}.

\subsection{Technical lemmas}
The following ``combinatorial bookkeeping'' lemma will allow us to explicitly describe the terms in the sum in Corollary~\ref{cor:indeximage} and relate them to each other. As mentioned above, it will play a similar role as primitivity (or mixing) does in~\cite[Lemma~6.10]{MatTFG}.

\begin{lemma}\label{lem:longlemma}
Let $E$ be a strongly connected graph. Assume there is an infinite emitter in $E$ which supports infinitely many loops and from which there is at least one edge to any other vertex in $E$.  Let $\emptyset \neq Y \subseteq \partial E$ be clopen. Suppose we are given a clopen proper subset $\emptyset \neq A \subsetneq Y$, finite subsets $P \subset \NN$ and $Q \subset - \NN$, natural numbers $m_k \in \NN$ and vertices~$v_{k,i} \in E^0$ indexed over $k \in Q \cup \{0\} \cup P$ and $1 \leq i \leq m_k$. Then there exists a natural number $N \geq \max_{q \in Q} \vert q \vert$ and 
\begin{enumerate}
\item mutually disjoint paths $\gamma_{k,i}^{(0)} \in E^N v_{k,i}$ such that $Z\left(\gamma_{k,i}^{(0)}\right) \subseteq Y \setminus A$ for all $k$ in ${Q \cup \{0\} \cup P}$ and $1 \leq i \leq m_k$,
\item mutually disjoint paths $\gamma_{p,i}^{(j)} \in E^{N+j} v_{p,i}$ such that $Z\left(\gamma_{p,i}^{(j)} \right) \subseteq A$ for all $p \in P$, $1 \leq i \leq m_p$ and $j = 1, 2, \ldots, p$,
\item  mutually disjoint paths $\gamma_{q,i}^{(l)} \in E^{N-l} v_{q,i}$ such that $Z\left(\gamma_{q,i}^{(l)} \right) \subseteq A$ for all $q \in Q$, $1 \leq i \leq m_q$ and $l = 1, 2, \ldots, \vert q \vert$.
\end{enumerate}
\end{lemma}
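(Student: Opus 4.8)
The plan is to use the distinguished infinite emitter to manufacture all the required paths by hand, exploiting its loops as ``separators'' that force disjointness. Write $w$ for the infinite emitter that supports infinitely many loops and has an edge to every vertex of $E$. First I would fix two \emph{anchoring} paths. Since $A$ and $Y\setminus A$ are nonempty clopen subsets of $\partial E$ (note $Y\setminus A\neq\emptyset$ because $A\subsetneq Y$ is proper), each contains a basic set $Z(\mu\setminus F)$; as $E$ is strongly connected it has no sinks, and the nonemptiness of $Z(\mu\setminus F)$ then forces an edge $e\in r(\mu)E^1\setminus F$, so that the honest cylinder set $Z(\mu e)\subseteq Z(\mu\setminus F)$ sits inside. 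Appending a path from $r(\mu e)$ to $w$ (available by strong connectivity), I obtain finite paths $\hat\alpha$ and $\hat\beta$ with $r(\hat\alpha)=r(\hat\beta)=w$, $Z(\hat\alpha)\subseteq A$ and $Z(\hat\beta)\subseteq Y\setminus A$. Set $a=|\hat\alpha|$ and $b=|\hat\beta|$.

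Every path I build will have the shape (anchor)$\cdot$(distinguished loop)$\cdot$(filler loops)$\cdot$(terminal edge). Concretely, to produce a path of prescribed length $L$ terminating at a prescribed vertex $v$ and with cylinder set contained in $A$, I take $\hat\alpha\,\ell\,\ell_0^{\,L-a-2}\,e$, where $\ell$ is a loop at $w$, $\ell_0$ is a fixed filler loop at $w$, and $e$ is an edge from $w$ to $v$ (such $e$ exists by hypothesis, a loop when $v=w$). This path has length exactly $L$, ends at $v$, and satisfies $Z(\hat\alpha\,\ell\,\ell_0^{\,L-a-2}\,e)\subseteq Z(\hat\alpha)\subseteq A$; it is legitimate as long as $L\ge a+2$. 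For a cylinder inside $Y\setminus A$ I instead use $\hat\beta$ and $b$. The three families have target lengths $N$ (inside $Y\setminus A$) and $N+j$, $N-l$ (both inside $A$), so taking $N\ge\max\{\,b+2,\ a+2+\max_{q\in Q}|q|\,\}$ guarantees that each target length meets the relevant lower bound $a+2$ or $b+2$; this $N$ also satisfies the stated requirement $N\ge\max_{q\in Q}|q|$. Feeding in the prescribed terminal vertices $v_{k,i}$ and the appropriate lengths then reads off the paths of (1), (2) and (3).

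It remains to secure mutual disjointness, and this is precisely where the loops at $w$ do the real work, replacing the unique invariant measure used in Matui's argument. Because there are infinitely many loops at $w$ but only finitely many paths to construct, I assign to each path its own distinguished loop $\ell$, all chosen pairwise distinct. For two paths carrying the same cylinder constraint the anchors coincide ($\hat\alpha$ of length $a$, or $\hat\beta$ of length $b$), so the paths agree up to that position and then differ at the very next edge, namely their distinct distinguished loops; hence neither is an initial subpath of the other, and by the characterisation of disjointness via cylinder sets they are disjoint. For two paths with opposite constraints, one cylinder set lies in $A$ and the other in $Y\setminus A$, which are disjoint, so the paths are disjoint. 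Since all lengths strictly exceed $a$ (resp.\ $b$), the distinguishing position always lies inside both paths, making the argument uniform.

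The only genuinely delicate points are bookkeeping: checking that each prescribed length is at least the minimal length $a+2$ (resp.\ $b+2$) allowed by the recipe, which is exactly what the choice of $N$ secures, and confirming that single-step reachability from $w$ covers every terminal vertex $v_{k,i}$ (including $v_{k,i}=w$, handled by a loop). I expect the main conceptual obstacle to be simply the realisation that disjointness can be forced combinatorially through distinct loops rather than measure-theoretically; once the anchor-then-separating-loop template is in place, the remaining verifications are routine.
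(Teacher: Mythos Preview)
Your argument is correct and follows essentially the same strategy as the paper: anchor into $A$ or $Y\setminus A$ via a fixed path ending at $w$, use loops at $w$ to control length and force disjointness, then append a single edge from $w$ to the prescribed terminal vertex. The only cosmetic differences are that the paper first pads the two anchors to equal length and then uses powers $e_{k,i}^{K+1+j}$ of a single distinguished loop (so both disjointness and length adjustment come from the same loop), whereas you keep the anchor lengths possibly different and separate the roles into one distinguished loop plus a filler $\ell_0$; neither variation affects the argument.
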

\begin{proof}
Pick an infinite emitter~$w \in E^0_{\text{sing}}$ which satisfy the assumptions in the lemma. We enumerate the infinitely many loops based at $w$ as $e_{k,i}$ (these are all distinct) where~$k$ and~$i$ both range over $\NN$. Choose paths $\mu, \mu' \in E^*$ such that $Z(\mu) \subseteq Y \setminus A$ and~$Z(\mu') \subseteq A$. By extending these paths we may assume that they both end in $w$, and by concatenating sufficiently many loops at $w$ to the shortest one of these, we may furthermore assume that $\vert \mu \vert = \vert \mu' \vert$. For each $k \in Q \cup \{0\} \cup P$ and $1 \leq i \leq m_k$ we pick an edge $f_{k,i}$ which goes from $w$ to $v_{k,i}$.
%\begin{figure}[h]%[h]
%  \centering
%    \includegraphics[scale=0.7]{graph.png}
%      \caption{A part of the graph $E$ in Lemma~\ref{lem:longlemma}.}
%      \label{fig:graph}
%\end{figure}
% Trenger ikke figuren XXX

The paths we will define will all start with either $\mu$ or $\mu'$, which will ensure that their cylinder sets are contained in either $A$ or $Y \setminus A$ as needed. Then they will have a certain number of the loops at $w$ and it is these that will ensure the paths are mutually disjoint. And they will all end with an edge $f_{k,i}$ taking care of the range of the paths. We set~$K \coloneqq \max_{q \in Q} \vert q \vert$ and $M \coloneqq \vert \mu \vert = \vert \mu' \vert$, and then define $N \coloneqq M + K + 2$. Here $M$ is present because all the paths start with $\mu$ or $\mu'$, $K$ is a ``buffer'' we can subtract from for the $\gamma_{q,i}^{(l)}$'s (as these should have length $N - l$) and the term $2$ comes from having at least one loop $e_{k,i}$ and then ending with $f_{k,i}$. We now define the desired paths as follows: 

\begin{alignat*}{3}
&(1) \ \gamma_{k,i}^{(0)} \coloneqq \mu \ e_{k,i}^{K+1} \ f_{k,i} && \text{ for } k \in Q \cup \{0\} \cup P \text{ and } 1 \leq i \leq m_k, \\
&(2) \ \gamma_{p,i}^{(j)} \coloneqq \mu' \ e_{p,i}^{K+1 + j} \ f_{p,i} && \text{ for } p \in P, \ 1 \leq i \leq m_p \text{ and }   j = 1, 2, \ldots, p, \\
&(3) \ \gamma_{q,i}^{(l)} \coloneqq \mu' \ e_{q,i}^{K+1 - l} \ f_{q,i} \qquad && \text{ for } q \in Q, \ 1 \leq i \leq m_q \text{ and }   l = 1, 2, \ldots, \vert q \vert.
\end{alignat*}
It is clear that these satisfy the conditions in the lemma.
\end{proof} %We see that actually all the paths in (1)--(3) are mutually disjoint, but we don't need that. We only need that those within the same point are disjoint.

The next lemma shows that for a graph $E$ with finitely many vertices, the conditions in Lemma~\ref{lem:longlemma} can always be arranged, by changing the graph, but without changing the (isomorphism class of the) groupoid. This is actually the only place where we need to assume that the graph has finitely many vertices (see also Remark~\ref{rem:infiniteVertex}).

In order to prove it, we will appeal to one of Sørensen's geometric moves on graphs from~\cite{Sorensen}. On page 1207 therein, a move on graphs called \emph{move~(T)} is described. In order to apply this move one needs a graph $E$ with an infinite emitter $w \in E^0_\text{sing}$. If there is a path $e_1 e_2 \cdots e_n$ in $E$ from $w$ to another vertex $v$ such that $w$ emits infinitely many edges to $r(e_1)$, then move~(T) is the operation of adding a countably infinite number of new edges from $w$ to $v$.

It is proved in~\cite[Theorem~5.4]{Sorensen} that move~(T) can be expressed using the first four ``standard moves'' in Section~3 of~\cite{Sorensen}. This means that move~(T) produces \emph{move equivalent} graphs, which in turn implies that the associated graph groupoids are Kakutani equivalent~\cite{CRS}. But by virtue of~\cite[Lemma~6.5]{BCW} we can deduce something even stronger, namely that move~(T) actually produce \emph{orbit equivalent} graphs. And in our setting this in fact implies isomorphism of the graph groupoids.

\begin{lemma}\label{lem:moveT}
Let $E$ be a strongly connected graph with finitely many vertices and suppose that $E$ has an infinite emitter $w \in E^0_\text{sing}$. Let $F$ denote the graph which is obtained from~$E$ by, for each $v \in E^0$, adding a countably infinite number of new edges from $w$ to~$v$. Then~$\mathcal{G}_E \cong \mathcal{G}_F$ as étale groupoids.  
\end{lemma}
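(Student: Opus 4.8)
The plan is to exhibit $F$ as the outcome of finitely many applications of S\o rensen's move~(T) to $E$, after which the isomorphism $\mathcal{G}_E \cong \mathcal{G}_F$ follows from the discussion preceding the statement: as recalled there, move~(T) produces orbit equivalent graphs (via \cite[Lemma~6.5]{BCW}), and in the present setting orbit equivalence of the graphs entails isomorphism of the associated graph groupoids.

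First I would confirm that the hypotheses of move~(T) are met. Since $w$ is an infinite emitter while $E^0$ is finite, the pigeonhole principle produces a vertex $u \in E^0$ receiving infinitely many edges from $w$; fix one such edge $e_1 \in wE^1 \cap E^1u$, so that $r(e_1) = u$. For an arbitrary target vertex $v \in E^0$, strong connectivity of $E$ furnishes a path $\lambda$ from $u$ to $v$ (and when $v = w$ one takes $\lambda$ to be a return path to $w$, so that $e_1\lambda$ is a cycle). Then $e_1\lambda$ is a path from $w$ to $v$ whose initial edge has range $u$, and $w$ emits infinitely many edges to $u = r(e_1)$; this is precisely the data required to apply move~(T) and adjoin countably many new edges from $w$ to $v$.

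Next I would iterate. Enumerating the finite set $E^0 = \{v_1, \ldots, v_n\}$, I apply move~(T) successively to adjoin infinitely many new edges from $w$ to $v_1$, then to $v_2$, and so on, producing a chain $E = E_0, E_1, \ldots, E_n = F$. Adjoining edges preserves both strong connectivity and the property that $w$ is an infinite emitter, while the vertex $u$ and the edge $e_1$ survive at every stage; hence the verification of the previous paragraph applies verbatim to each $E_{j-1}$, so every move is legitimate and the end result is exactly $F$. Since each move~(T) produces orbit equivalent graphs and orbit equivalence is transitive, $E$ and $F$ are orbit equivalent. Finally, both graphs are strongly connected with an infinite emitter, hence satisfy Condition~(L): an exit-free cycle would force every vertex it meets to emit exactly one edge, so the infinite emitter could not lie on such a cycle, and no forward path leaving the cycle could exist to reach it, contradicting strong connectivity. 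Thus $\mathcal{G}_E$ and $\mathcal{G}_F$ are effective, and the orbit equivalence upgrades to the desired isomorphism $\mathcal{G}_E \cong \mathcal{G}_F$ of étale groupoids.

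The only genuine content beyond bookkeeping is the second step, namely confirming that move~(T) reaches \emph{every} vertex; this is exactly where the interplay between the infinite emitter (through pigeonhole) and strong connectivity does the work. Two minor wrinkles deserve an explicit word: the case $v = w$, where the adjoined edges are loops and one must use a cycle through $u$ rather than a path to a distinct vertex, and the observation that the persistence of the hypotheses under iteration is automatic, since adding edges can only strengthen connectivity and the infinite-emitter property.
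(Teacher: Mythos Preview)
Your proof is correct and follows essentially the same approach as the paper: both argue that $F$ is obtained from $E$ by finitely many applications of move~(T), deduce orbit equivalence, verify Condition~(L), and then invoke the main result of~\cite{BCW} to upgrade orbit equivalence to groupoid isomorphism. Your version is slightly more explicit in justifying the pigeonhole step, the persistence of hypotheses under iteration, and why Condition~(L) holds, but the argument is the same.
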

\begin{proof}
The strong connectedness of $E$ guarantees, that for each vertex $v \in E^0$, there exists a path from $w$ to $v$ that starts with an edge to a vertex to which $w$ emits infinitely already. Thus we see that the graph $F$ is obtained from $E$ by applying move~(T) finitely many times. As mentioned in the paragraph above, this implies that the graphs~$E$ and~$F$ are orbit equivalent. The assumptions on $E$ also ensure that $E$ satisfies Condition~(L), and therefore so does $F$. It now follows from the main result of~\cite{BCW} that~${\mathcal{G}_E \cong \mathcal{G}_F}$.
\end{proof}

The final lemma describes in some sense a ``graded cancellation'' for the map $\varphi$ on~$H_0(\mathcal{H}_E)$. It is a straightforward extension of~\cite[Lemma~6.9]{MatTFG}, after having established cancellation for general AF-groupoids in Section~\ref{sec:cancellation}, but we have nevertheless included the short argument for completeness.

\begin{lemma}\label{lem:gradedcancellation}
Let $E$ be an essential graph and let $A,B \subseteq \partial E$ be compact open subsets. If $\varphi^n \left( \left[1_{A } \right] \right) = \left[1_{B } \right]$ in $H_0(\mathcal{H}_E)$ for some $n \in \NN$, then there exists a bisection $U \subseteq \mathcal{G}_E$ satisfying $U \subseteq c_E^{-1}(n)$, $s(U) = A$ and $r(U) = B$.
\end{lemma}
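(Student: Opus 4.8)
The plan is to transport the hypothesis across the embedding $\phi\colon H_0(\mathcal{H}_E)\hookrightarrow H_0(\G_E\times_{c_E}\ZZ)$ of Lemma~\ref{lem:H0inH0}, apply cancellation in the skew product, and then descend the resulting bisection back to $\mathcal{G}_E$. First I would dispose of the degenerate case: since $\mathcal{H}_E$ and $\G_E\times_{c_E}\ZZ$ are both AF-groupoids (Corollary~\ref{cor:AF}), Corollary~\ref{cor:zeroCancel} shows that a class $[1_C]$ vanishes only when $C=\emptyset$. As $\varphi^n([1_\emptyset])=0$, the relation $\varphi^n([1_A])=[1_B]$ forces $A=\emptyset$ if and only if $B=\emptyset$, in which case $U=\emptyset$ works. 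So I may assume $A,B\neq\emptyset$.

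Next I would translate the hypothesis into the skew product. By the commuting square in the proof of Lemma~\ref{lem:kerker} (equivalently, by Definition~\ref{def:delta}, as $\varphi$ is the restriction of $H_0(\rho_\bullet)$ to the subgroup $\phi(H_0(\mathcal{H}_E))$), one has $\phi\circ\varphi^n = H_0(\rho_\bullet)^n\circ\phi$. Applying the homomorphism $\phi$ to $\varphi^n([1_A])=[1_B]$ and using $\phi([1_C])=[1_{C\times\{0\}}]$ together with $H_0(\rho_\bullet)([1_{C\times\{i\}}])=[1_{C\times\{i+1\}}]$, I obtain
\[ [1_{A\times\{n\}}] = [1_{B\times\{0\}}] \quad\text{in } H_0(\G_E\times_{c_E}\ZZ). \]
Both $A\times\{n\}$ and $B\times\{0\}$ are nonempty compact open subsets of the unit space $\partial E\times\ZZ$, so cancellation for the AF-groupoid $\G_E\times_{c_E}\ZZ$ (Theorem~\ref{thm:cancellation} and Corollary~\ref{cor:AF}) produces a bisection $W\subseteq\G_E\times_{c_E}\ZZ$ with $s(W)=A\times\{n\}$ and $r(W)=B\times\{0\}$.

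The main work, and the step I expect to demand the most care, is descending $W$ to a genuine bisection of $\mathcal{G}_E$ carrying the prescribed cocycle value. Writing a typical element of $W$ as $(g,m)$ and recalling $r(g,m)=(r(g),m)$ and $s(g,m)=(s(g),c_E(g)+m)$, the equality $r(W)=B\times\{0\}$ forces $m=0$ and $r(g)\in B$, while $s(W)=A\times\{n\}$ forces $c_E(g)=n$ and $s(g)\in A$, for every element. Hence $W\subseteq\mathcal{G}_E\times\{0\}$, and since the projection $(g,0)\mapsto g$ restricts to a homeomorphism of the clopen set $\mathcal{G}_E\times\{0\}$ onto $\mathcal{G}_E$, the image $U\coloneqq\{g\in\mathcal{G}_E \mid (g,0)\in W\}$ is open with $U\subseteq c_E^{-1}(n)$. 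I would then check that $r$ and $s$ are injective on $U$: injectivity of $r$ is immediate from injectivity of $r$ on $W$, and for $s$ one uses that $c_E$ is constantly $n$ on $U$, so $s(g)=s(g')$ gives $s(g,0)=(s(g),n)=(s(g'),n)=s(g',0)$ and hence $g=g'$ by injectivity of $s$ on $W$. Thus $U$ is a bisection, and projecting the equalities $s(W)=A\times\{n\}$ and $r(W)=B\times\{0\}$ yields $s(U)=A$ and $r(U)=B$, which completes the argument.
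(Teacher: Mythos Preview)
Your proof is correct but proceeds along a genuinely different route from the paper's. The paper stays entirely inside $\mathcal{G}_E$: it writes $A=\bigsqcup_j Z(\mu_j\setminus F_j)$, picks paths $\gamma_j\in E^n$ with $r(\gamma_j)=s(\mu_j)$, and forms the concrete set $C=\bigsqcup_j Z(\gamma_j\mu_j\setminus F_j)$ together with the explicit bisection $V=\bigsqcup_j Z(\gamma_j\mu_j,F_j,\mu_j)\subseteq c_E^{-1}(n)$ carrying $A$ onto $C$. Since $[1_C]=\varphi^n([1_A])=[1_B]$ in $H_0(\mathcal{H}_E)$, cancellation in the AF-groupoid $\mathcal{H}_E$ yields $W\subseteq c_E^{-1}(0)$ with $s(W)=C$, $r(W)=B$, and one sets $U=WV$. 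Your argument instead pushes the identity through the embedding $\phi$ into $H_0(\mathcal{G}_E\times_{c_E}\ZZ)$, applies cancellation there, and then observes that the range constraint pins the second coordinate to $0$ so that the resulting bisection projects faithfully to $\mathcal{G}_E$ with the cocycle value forced to be $n$. What your approach buys is conceptual cleanliness---no auxiliary set $C$ or hand-built bisection $V$ is needed, and the cocycle value is automatically encoded by the $\ZZ$-coordinate of the skew product; what the paper's approach buys is that it never leaves $\mathcal{H}_E$ and $\mathcal{G}_E$, so it does not rely on the embedding~$\phi$ from Lemma~\ref{lem:H0inH0} or on the commuting square from Lemma~\ref{lem:kerker}.
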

\begin{proof}
We first write $A$ as a disjoint union of punctured sylinder sets $A = \sqcup_{j=1}^J Z(\mu_j \setminus F_j)$. Now pick paths $\gamma_j \in E^n$ with $r(\gamma_j) = s(\mu_j)$ and set $C \coloneqq \sqcup_{j=1}^J Z(\gamma_j \mu_j \setminus F_j)$. Then we have 
\[\left[1_{B } \right] = \varphi^n \left( \left[1_{A} \right] \right) = \left[1_{C } \right] \text{ in } H_0(\mathcal{H}_E) \]
by definition of $\varphi$. Invoking cancellation in the AF-groupoid $\mathcal{H}_E$ (Theorem~\ref{thm:cancellation}) produces a bisection $W \subseteq \mathcal{H}_E \subseteq \mathcal{G}_E$ with $s(W) = C$ and $r(W) = B$. Next define the bisection~${V \coloneqq  \sqcup_{j=1}^J Z(\gamma_j \mu_j, F_j, \mu_j)}$, which satisfies $s(V) = A$ and $r(V) = C$. Finally, setting~${U \coloneqq WV}$ gives us the desired bisection since $s(U) = s(V) = A$, $r(U) = r(W) = B$ and $U \subseteq c_E^{-1}(n)$, because $W \subseteq c_E^{-1}(0)$ and $V \subseteq c_E^{-1}(n)$.
\end{proof}

\subsection{The main result}
We are now ready to give the proof of our main result.
 %DESCRIBE THE STRATEGY BEFORE THE PROOF? (take out some from the proof itself?) OR INSIDE IT AT THE START?
%Start be outlining the steps in the proof? So as to make it easier to follow? Divide it into steps maybe?
\begin{theorem}\label{thm:TR}
Let $E$ be a strongly connected graph with finitely many vertices and at least one infinite emitter. Let further $\emptyset \neq Y \subseteq \mathcal{G}_E^{(0)} = \partial E$ be clopen. Then the restricted graph groupoid $\mathcal{G}_E  \vert_Y$ has Property~TR.
\end{theorem}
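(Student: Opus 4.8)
The plan is to follow the architecture of~\cite[Lemma~6.10]{MatTFG}, replacing its measure-theoretic input (the Perron eigenvalue and the unique $\mathcal{H}_E$-invariant measure) by the combinatorial Lemma~\ref{lem:longlemma} together with cancellation in AF-groupoids. First I would arrange the graph: by Lemma~\ref{lem:moveT} I may apply move~(T) at an infinite emitter to pass to a move-equivalent graph with isomorphic groupoid, so without loss of generality $E$ contains an infinite emitter $w$ supporting infinitely many loops and emitting at least one edge to every vertex; this is exactly what makes the hypotheses of Lemma~\ref{lem:longlemma} available. Next, by Lemma~\ref{lem:support} it suffices to show that every $\alpha \in \llbracket \mathcal{G}_E \vert_Y \rrbracket$ with $I(\alpha) = 0$ and $\supp(\alpha) \neq Y$ lies in $\mathcal{S}(\mathcal{G}_E \vert_Y)$. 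Fixing such an $\alpha \neq \id$, the set $A \coloneqq Y \setminus \supp(\alpha)$ is a nonempty proper clopen subset of $Y$ which will serve as free space.

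The heart of the argument is to construct a product of transpositions $\beta \in \mathcal{S}(\mathcal{G}_E \vert_Y)$ with the \emph{same} graded partition as $\alpha$, i.e.\ with $S_\beta(k) = S_\alpha(k)$ for every $k \in \ZZ$. Once this is achieved, Lemma~\ref{lem:betaalphainverse} gives $\beta\alpha^{-1} \in \llbracket \mathcal{H}_E \vert_Y \rrbracket$. Since $\mathcal{H}_E \vert_Y$ is AF (Corollary~\ref{cor:AF} together with restriction to a clopen set), every element of its topological full group is a product of transpositions: any such element is supported in a single elementary subgroupoid of the defining filtration, where it acts as a clopen-valued finite permutation and hence factors through transpositions. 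Therefore $\beta\alpha^{-1} \in \mathcal{S}(\mathcal{G}_E \vert_Y)$, and consequently $\alpha = (\beta\alpha^{-1})^{-1}\beta \in \mathcal{S}(\mathcal{G}_E \vert_Y)$, as required.

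To build $\beta$ I would exploit the hypothesis $I(\alpha) = 0$. By Corollary~\ref{cor:indeximage} this means
\[
\sum_{k \in \ZZ} \varphi^{(k)}\!\left(\left[1_{S_\alpha(k) \times \{0\}}\right]\right) = 0 \quad \text{in } H_0(\mathcal{H}_E),
\]
which, after expanding $\varphi^{(k)}$ and using $\varphi^i([1_{S_\alpha(k)\times\{0\}}]) = [1_{S_\alpha(k)\times\{i\}}]$, rearranges into an equality of two sums of classes of clopen subsets of $\partial E$, one coming from the levels $k>0$ and the other from the levels $k<0$. Because $E$ has the infinite emitter $w$, Lemma~\ref{lem:longlemma} furnishes, for the finitely many nonempty $S_\alpha(k)$ and the vertices they meet, a uniform length $N$ together with mutually disjoint paths whose cylinder sets lie inside $A$ (scratch space at the various levels $N \pm j$) or inside $Y \setminus A$. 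These let me realize each abstract class above by an honest clopen set, so that the two sides become disjoint unions of equal $H_0$-class; cancellation in the AF-groupoid $\mathcal{H}_E$ (Theorem~\ref{thm:cancellation}) and its graded refinement (Lemma~\ref{lem:gradedcancellation}) then produce compact bisections, supported in the prescribed levels $c_E^{-1}(k)$, connecting the source sets $S_\alpha(k)$ to parking sets inside the free space $A$, and the associated transpositions $\pi_{\widehat V}$ (cf.\ Lemma~\ref{lem:Sbeta}) are assembled into $\beta$.

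The main obstacle is precisely this assembly. A naive product of transpositions that merely shifts each $S_\alpha(k)$ by $k$ introduces spurious pieces in the graded partition at the levels $-k$ coming from the parking sets, so that $S_\beta$ would differ from $S_\alpha$. The role of the vanishing relation $\sum_k \varphi^{(k)}([1_{S_\alpha(k)\times\{0\}}]) = 0$ is exactly to guarantee that these auxiliary contributions can be matched up and cancelled — this is the step that would fail if $I(\alpha) \neq 0$ — so that the transpositions may be chained through the scratch space provided by Lemma~\ref{lem:longlemma} to produce a $\beta$ whose graded partition is exactly $(S_\alpha(k))_{k \in \ZZ}$ and nothing more. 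Keeping careful track of this bookkeeping, so that all sources and ranges remain disjoint and the resulting $\beta$ is a genuine full bisection of $Y$, is the delicate part of the argument.
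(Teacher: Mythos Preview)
Your outline follows the paper's architecture: reduce via Lemma~\ref{lem:support} to $\supp(\alpha)\subsetneq Y$, arrange the graph via Lemma~\ref{lem:moveT}, build a product of transpositions with the same graded partition as the target element, then conclude via Lemma~\ref{lem:betaalphainverse} and Property~TR for the AF-groupoid $\mathcal{H}_E\vert_Y$. The ingredients you list (Lemma~\ref{lem:longlemma}, Theorem~\ref{thm:cancellation}, Lemma~\ref{lem:gradedcancellation}, Corollary~\ref{cor:indeximage}) are exactly those used in the paper.

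There is one genuine technical step you have skipped, and your direct plan runs into it. The paper does \emph{not} construct a product of transpositions matching $S_\alpha$ directly. It first conjugates: using the paths $\gamma^{(0)}_{k,i}$ from Lemma~\ref{lem:longlemma} it builds a single transposition $\tau_V$ with $s(V)=\supp(\alpha)$ and $r(V)=B\subseteq Y\setminus\supp(\alpha)$, and sets $\beta\coloneqq\tau_V\,\alpha\,\tau_V$. Now $\supp(\beta)=B$, the original set $\supp(\alpha)$ becomes free space, and $S_\beta(k)$ has the controlled form $\bigsqcup_i Z(\gamma^{(0)}_{k,i}\mu_{k,i}\setminus F_{k,i})$ by Lemma~\ref{lem:Sbeta}. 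Only then is the product of transpositions $\tau=\tau_-\circ\tau_+$ built to match $S_\beta$ (not $S_\alpha$), with the scratch sets $D_{p,j}$ and $X_{q,l}$ placed inside $\supp(\alpha)$. The conjugation is what makes the levels line up: Lemma~\ref{lem:longlemma} produces paths of length $N+j$, not length $j$, so the scratch sets realize $\varphi^{N+j}\bigl([1_{S_\alpha(p)}]\bigr)$, which equals $\varphi^{j}\bigl([1_{S_\beta(p)}]\bigr)$ \emph{only after} the uniform shift by $N$ contributed by $\tau_V$. If you attempt to match $S_\alpha$ directly, you would need disjoint clopen representatives of $\varphi^{j}\bigl([1_{S_\alpha(p)}]\bigr)$ inside the free space for $j=1,\ldots,p$; but for small $j$ the free space may be too ``deep'' (all short cylinders could meet $\supp(\alpha)$), and neither Lemma~\ref{lem:longlemma} nor cancellation produces such sets. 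Insert the conjugation step and the rest of your sketch becomes the paper's proof verbatim.
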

\begin{proof} 
Let $\alpha = \pi_U \in \llbracket \mathcal{G}_E  \vert_Y \rrbracket \setminus \{ \id \}$ be given and suppose that $I(\alpha) = 0$ in $H_1(\mathcal{G}_E  \vert_Y)$. We are going to show that $\alpha$ is a product of transpositions. In the previous section we saw that $I(\alpha)$ corresponds to an element in $\ker(\id - \varphi) \leq H_0(\mathcal{H}_E)$ which is described in terms of the finite clopen partition $\{ S_\alpha(k) \}_{k \in \ZZ}$ of $Y$. Define 
\[P \coloneqq \{ k > 0 \mid S_\alpha(k) \neq \emptyset \} \qquad \text{and} \qquad  Q \coloneqq \{ k < 0 \mid S_\alpha(k) \neq \emptyset \}.  \]
These are finite subsets of $\NN$. Set $A \coloneqq \supp(\alpha)$. By Lemma~\ref{lem:support} we may assume that~${A \neq Y}$. And $A$ is non-empty since $\alpha \neq \id$. We can write 
\[A = \supp(\alpha) =  (S_\alpha(0) \cap A) \bigsqcup \bigsqcup_{k \in Q \cup P} S_\alpha(k).  \]
Now decompose these in terms of punctured cylinder sets as 
\[ S_\alpha(0) \cap A = \bigsqcup_{i=1}^{m_0} Z(\mu_{0,i} \setminus F_{0,i})     \qquad \text{and} \qquad   S_\alpha(k) =  \bigsqcup_{i=1}^{m_k} Z(\mu_{k,i} \setminus F_{k,i}),    \]
where $\mu_{k,i} \in E^*$ and $F_{k,i} \subseteq_{\text{finite}} r(\mu_{k,i})$. It is possible for one of $P$, $Q$ or $S_\alpha(0) \cap A$ to be empty (but not all of them). For now we assume that all three are non-empty, and we shall comment on what happens otherwise near the end of the proof.   

At this point we want to invoke Lemma~\ref{lem:longlemma}. By Lemma~\ref{lem:moveT} we may assume that $E$ satisfies the hypotheses of Lemma~\ref{lem:longlemma}. Setting $v_{k,i} = s(\mu_{k,i})$ in Lemma~\ref{lem:longlemma} gives us a natural number $N$ (larger in absolute value than all numbers in $Q$) and
\begin{enumerate}
\item mutually disjoint paths $\gamma_{k,i}^{(0)} \in E^N s(\mu_{k,i})$ such that $Z\left(\gamma_{k,i}^{(0)}\right) \subseteq Y \setminus A$ for all $k \in Q \cup \{0\} \cup P$ and $1 \leq i \leq m_k$,
\item mutually disjoint paths $\gamma_{p,i}^{(j)} \in E^{N+j} s(\mu_{p,i})$ such that $Z\left(\gamma_{p,i}^{(j)} \right) \subseteq A$ for all $p \in P$, $1 \leq i \leq m_p$ and $j = 1, 2, \ldots, p$,
\item  mutually disjoint paths $\gamma_{q,i}^{(l)} \in E^{N-l} s(\mu_{q,i})$ such that $Z\left(\gamma_{q,i}^{(l)} \right) \subseteq A$ for all $q \in Q$, $1 \leq i \leq m_q$ and $l = 1, 2, \ldots, \vert q \vert$.
\end{enumerate}
From these we define the compact open set
\[ B \coloneqq    \bigsqcup_{k \in Q \cup \{0\} \cup P} \       \bigsqcup_{i=1}^{m_k} Z(\gamma_{k,i}^{(0)} \mu_{k,i} \setminus F_{k,i} ) \subseteq Y \setminus A. \]
%Note that $\alpha$ fixes every point in $B$.
Next we define the bisection 
\[V \coloneqq \bigsqcup_{k \in Q \cup \{0\} \cup P} \        \bigsqcup_{i=1}^{m_k} Z(\gamma_{k,i}^{(0)}  \mu_{k,i}, F_{k,i},  \mu_{k,i}).   \]
%\todo{Streamline notation? Stick to $\pi_{\widehat{V}}$ only?}
As $s(V) = A$ is disjoint from $r(V) = B$ we get a transposition
${\tau_V \coloneqq \pi_{\widehat{V}} \in \llbracket \mathcal{G}_E  \vert_Y \rrbracket}$. This transposition satisfies $\tau_V(A) = B$, $\tau_V(B) = A$, ${\supp(\tau_V) = A \sqcup B}$ and 
\[  S_{\tau_V}(N) = A, \quad   S_{\tau_V}(-N) = B,  \quad   S_{\tau_V}(0) = Y \setminus A \sqcup B.          \]
%\[S_{\tau_V}(k) = \begin{cases}
%A & k = N, \\
%B & k = - N, \\
%Y \setminus A \sqcup B & k = 0, \\
%\emptyset & \text{otherwise}.
%\end{cases} \] % This old version uses more space...

We now define another element in $\llbracket \mathcal{G}_E  \vert_Y \rrbracket$, namely $\beta \coloneqq \tau_V  \alpha  \tau_V$. If we can prove that~$\beta$ is a product of transpositions, then the theorem follows. To do just that, we are going construct another element $\tau \in \llbracket \mathcal{G}_E  \vert_Y \rrbracket$, which is itself a product of transpositions, but which also satisfies $S_\tau(k) = S_\beta(k)$ for all $k$.  The construction of $\tau$ is a bit involved, so before we get to that, let us explain why having $\tau$ suffices. Given an element $\tau$ as above, we deduce from Lemma~\ref{lem:betaalphainverse} that $\beta \tau^{-1} \in  \llbracket \mathcal{H}_E  \vert_Y \rrbracket$. Making use of the fact that~${I(\alpha) = 0}$ we find that $I(\beta \tau^{-1}) =0$ as well. Indeed, \[I(\beta \tau^{-1}) = I(\tau_V \alpha \tau_V \tau^{-1}) = I(\tau_V) + I(\alpha) + I(\tau_V) - I(\tau),\]
which are all $0$ as transpositions are in the kernel of the index map. The groupoid~$\mathcal{H}_E  \vert_Y$ is an AF-groupoid, and since all AF-groupoids have Property~TR~\cite[Theorem~3.3.(4)]{MatProd} we deduce that $\beta \tau^{-1}$ is a product of transpositions (in $\llbracket \mathcal{H}_E  \vert_Y\rrbracket) \subseteq \llbracket \mathcal{G}_E  \vert_Y \rrbracket $). But then $\beta$ is a product of transpositions as well.

All that remains now is the construction of $\tau$ as above. The element~$\tau$ will be of the form $\tau =   \tau_{-} \circ \tau_{+}$, where $\tau_{+}$ will be constructed from the $S_\beta(p)$'s for $p \in P$ and similarly~$\tau_{-}$ comes from the $S_\beta(q)$'s for $q \in Q$. We begin by noting that $\supp(\beta) = B$ and that
\begin{equation}\label{eq:suppBeta}
S_\beta(k) = \tau_V(S_\alpha(k)) = \begin{cases}
\bigsqcup_{i=1}^{m_k} Z(\gamma_{k,i}^{(0)} \mu_{k,i} \setminus F_{k,i}) & \text{ for } k \neq 0 \\
\bigsqcup_{i=1}^{m_0} Z(\gamma_{0,i}^{(0)} \mu_{0,i} \setminus F_{0,i}) \bigsqcup Y \setminus B & \text{ for } k = 0
\end{cases}
\end{equation}
by Lemma~\ref{lem:Sbeta}. Let us define the compact open sets 
\[D_{p,j} \coloneqq \bigsqcup_{i=1}^{m_p} Z(\gamma_{p,i}^{(j)} \mu_{p,i} \setminus F_{p,i})  \]
for $p\in P$ and $1 \leq j \leq p$ and set
\[D \coloneqq \bigsqcup_{p \in P} \left(  \bigsqcup_{j=1}^{p-1}  D_{p,j}  \bigsqcup  S_\beta(p) \right).   \] 
Observe that\footnote{Henceforth we suppress the ``$\times \{0\}$'' from $S_\beta(0) \times \{0\}$ to increase readability.} 
\begin{equation}\label{eq:phiD}
\varphi^{j} \left( \left[1_{S_\beta(p)} \right] \right) =    \left[1_{D_{p,j}} \right]  \in H_0(\mathcal{H}_E) 
\end{equation}
for $p,j$ as above. Furthermore, for $p \in P$ define the bisections 
\[W_{p,j}  \coloneqq \bigsqcup_{i=1}^{m_p} Z(\gamma_{p,i}^{(j)} \mu_{p,i}, \ F_{p,i}, \ \gamma_{p,i}^{(j-1)} \mu_{p,i}) \subseteq \mathcal{G}_E \quad \text{ for }  1 \leq j \leq p.\]
Using Equation~\eqref{eq:suppBeta} and the definition of the $D_{p,j}$'s we observe that 
\begin{align*}
W_{p,j} \subseteq  c_E^{-1}(-1),& \quad r(W_{p,j}) = D_{p,j} \text{ for } j \geq 1, \\
s(W_{p,1}) = S_\beta(p),& \quad s(W_{p,j}) = D_{p,j-1}  \text{ for } j \geq 2.
\end{align*}
%\begin{align*}
%c_E(W_{p,j}) &= \{ -1 \}, \\
%s(W_{p,1}) &= S_\beta(p), \\
%s(W_{p,j}) &= D_{p,j-1} \quad \text{ for } j \geq 2,\\
%r(W_{p,j}) &= D_{p,j}.
%\end{align*} This old version uses more space...
As the sources and ranges of these bisections are disjoint (mutually disjoint even) we obtain transpositions $\tau_{p,j} = \pi_{\widehat{W_{p,j}}}$ which swap them. Now we are ready to define the ``first half'' of $\tau$, namely $\tau_{+}$, as follows 
\[\tau_{+} \coloneqq \prod_{p \in P} \tau_{p,p} \circ \tau_{p, p-1} \circ \cdots \circ \tau_{p, 1}.   \]
%In Figure~\ref{fig:tauplus} we illustrate what $\tau_{+}$ does in for a concrete example of $P$.
%\begin{figure}
%  \centering
%    \includegraphics[scale=0.5]{figp.png}
%      \caption{What $\tau_{+}$ looks like if $P = \{1,2,4\}$. The numbers in green indicate the lag, so that e.g.\ $S_\beta(4)$ belong to $S_{\tau_{+}}(4)$ and $D_{4,4}$ belong to~$S_{\tau_{+}}(-1)$.  }
%      \label{fig:tauplus}
%\end{figure}
% Dont need this figure
As a homeomorphism, $\tau_{+}$ is the ``disjoint union of the cycles'' \[S_\beta(p) \mapsto D_{p,p} \mapsto D_{p,p-1} \mapsto \cdots \mapsto D_{p,1} \mapsto S_\beta(p)\] for $p \in P$. Observe that we have
\begin{align*}
\tau_{+}(S_\beta(p)) &= D_{p,p}, \quad S_{\tau_{+}}(p) =  S_\beta(p), \quad S_{\tau_{+}}(-1) = \bigsqcup_{p \in P}   \bigsqcup_{j=1}^{p}  D_{p,j}, \\
\supp(\tau_{+}) &= \bigsqcup_{p \in P} S_{\tau_{+}}(p) \bigsqcup S_{\tau_{+}}(-1) = D \bigsqcup_{p \in P}  D_{p,p}.
\end{align*} 

Our next objective is to construct the other half of $\tau$, namely $\tau_{-}$. Combining Corollary~\ref{cor:indeximage} ($Y$ is full because $\mathcal{G}_E$ is minimal) with Equation~\eqref{eq:phiD} we obtain
\begin{align}\label{eq:homology2}
 0 &= I(\alpha) = I(\beta) = \sum_{k \in \ZZ} \varphi^{(k)} \left( \left[1_{S_\beta(k)} \right] \right) = \sum_{k \in \ZZ \setminus \{0\}} \varphi^{(k)} \left( \left[1_{S_\beta(k)} \right] \right) \nonumber \\
& \implies  \sum_{q \in Q} \varphi^{(q )} \left( \left[1_{S_\beta(q)} \right] \right) = - \sum_{p \in P} \varphi^{(p)} \left( \left[1_{S_\beta(p)} \right] \right) \nonumber \\
& \implies  \sum_{q \in Q} \left( \varphi^{-1} \left( \left[1_{S_\beta(q)} \right] \right) + \varphi^{-2} \left( \left[1_{S_\beta(q)} \right] \right) + \cdots + \varphi^{ q } \left( \left[1_{S_\beta(q)} \right] \right)    \right) \nonumber \\
&=  \sum_{p \in P} \left(  \left[1_{S_\beta(p)} \right] + \varphi \left( \left[1_{S_\beta(p)} \right] \right) + \cdots + \varphi^{p-1} \left( \left[1_{S_\beta(p)} \right] \right)    \right) \nonumber  \\
& = \sum_{p \in P} \left(  \left[1_{S_\beta(p)} \right] + \left[1_{D_{p,1}} \right] + \cdots + \left[1_{D_{p,p-1}} \right]   \right) = \left[  1_D  \right].
\end{align}
Similarly to the $D_{p,j}$'s, we define the compact open sets
\[X_{q,l} \coloneqq \bigsqcup_{i=1}^{m_q} Z(\gamma_{q,i}^{(l)} \mu_{q,i} \setminus F_{q,i})  \]
for $q \in Q$ and $1 \leq j \leq \vert q \vert$, and set
\[X \coloneqq \bigsqcup_{q \in Q}  \bigsqcup_{l=1}^{\vert q \vert}  X_{q,l}.   \] 
These sets then satisfy 
%\[\varphi \left(  \left[1_{X_{q,1}} \right] \right) = \left[1_{S_\beta(q)} \right]   \quad   \text{and} \quad \varphi \left( \left[1_{X_{q,l+1}} \right] \right) =    \left[1_{X_{q,l}} \right]. ? \]
%This old version uses more space...
%Hence we have 
\begin{equation}\label{eq:phiX}
\varphi^{-l} \left( \left[1_{S_\beta(q)} \right] \right) =    \left[1_{X_{q,l}} \right] \in H_0(\mathcal{H}_E)
\end{equation}
for $q,l$ as above.
%We are allowed to apply $\varphi^{-l}$ here since $S_\beta(q) = \bigsqcup_{i=1}^{m_q} Z(\gamma_{q,i}^{(0)} \mu_{q,i} \setminus F_{q,i})$, and these paths are all longer than $l$.
Equation~\eqref{eq:homology2} now says that $\left[  1_X  \right]= \left[  1_D  \right]$ in $H_0(\mathcal{H}_E)$. Invoking cancellation in $\mathcal{H}_E$ (Theorem~\ref{thm:cancellation}) we can find a bisection $R \subseteq \mathcal{H}_E$ with $s(R) = X$ and $r(R) = D$. Now define $R_{q,l} \coloneqq s^{-1}\left(X_{q,l} \right)$ and $C_{q,l} \coloneqq r\left(R_{q,l} \right)$. Then the $R_{q,l}$'s are mutually disjoint bisections which witness that $\left[  1_{C_{q,l}}  \right]= \left[  1_{X_{q,l}}  \right]$. We also define 
\[C \coloneqq \bigsqcup_{q \in Q}  \bigsqcup_{l=1}^{\vert q \vert}  C_{q,l}.   \] 
Observe that we actually have $C = D$, as they both equal $r(R)$. Equation~\eqref{eq:phiX} implies that 
\[ \varphi \left( \left[1_{C_{q,1}} \right] \right) = \left[1_{S_\beta(q)} \right]   \quad \text{and} \quad \varphi \left( \left[1_{C_{q,l}} \right] \right) = \left[1_{C_{q,l-1}} \right] \ \text{ for } l \geq 2 \]
in $H_0(\mathcal{H}_E)$. Hence Lemma~\ref{lem:gradedcancellation} yields bisections $T_{q,l} \subseteq  \mathcal{G}_E$ satisfying
\begin{align*}
T_{q,l} \subseteq c_E^{-1}(1),& \quad s(T_{q,l}) = C_{q,l} \text{ for } l \geq 1, \\
r(T_{q,1}) = S_\beta(q),& \quad r(T_{q,l}) = C_{q,l-1} \text{ for } l \geq 2.
\end{align*}
%\begin{align*}
%c_E(T_{q,l}) &= \{ 1 \},   \\
%s(T_{q,l})& = C_{q,l}, \\
%r(T_{q,1}) &= S_\beta(q), \\
%r(T_{q,l}) &= C_{q,l-1} \quad \text{ for } l \geq 2.
%\end{align*}
%This old version uses more space...
Let $\tau_{q,l} \coloneqq \pi_{\widehat{T_{q,l}}}$ denote the associated transpositions. From these we in turn define $\tau_{-}$ in a similar fashion as $\tau_{+}$ by setting
\[\tau_{-} \coloneqq \prod_{q \in Q} \tau_{q, \vert q \vert} \circ \tau_{q, \vert q \vert -1} \circ \cdots \circ \tau_{q, 1}.   \]
%In Figure~\ref{fig:tauminus} we illustrate what $\tau_{-}$ does in for a concrete example of $Q$.
%\begin{figure}
%  \centering
%    \includegraphics[scale=0.5]{figq.png}
%      \caption{What $\tau_{-}$ looks like if $Q = \{-2,-3\}$. The numbers in green indicate the lag.  }
%      \label{fig:tauminus}
%\end{figure}
% Ikke denne heller ofc
Just like $\tau_{+}$, the homeomorphism $\tau_{-}$ is a ``disjoint union of cycles'' \[S_\beta(q) \mapsto C_{q,\vert q \vert} \mapsto C_{q,\vert q \vert -1} \mapsto \cdots \mapsto C_{q,1} \mapsto S_\beta(q)\] for $q \in Q$. And we have
\begin{align*}
\tau_{-}(S_\beta(q)) &= C_{q,\vert q \vert}, \quad S_{\tau_{-}}(q) =  S_\beta(q), \quad S_{\tau_{-}}(1) = \bigsqcup_{q \in Q}  \bigsqcup_{l=1}^{\vert q \vert}  C_{q,l} = C, \\
\supp(\tau_{-}) &= \bigsqcup_{q \in Q} S_{\tau_{-}}(q) \bigsqcup S_{\tau_{-}}(1) = C \bigsqcup_{q \in Q}  S_\beta(q). 
\end{align*}  
%\begin{align*}
%\tau_{-}(S_\beta(q)) &= C_{q,\vert q \vert}, \\
%S_{\tau_{-}}(q) &=  S_\beta(q), \\
%S_{\tau_{-}}(1) &= \bigsqcup_{q \in Q}  \bigsqcup_{l=1}^{\vert q \vert}  C_{q,l} = C, \\
%\supp(\tau_{-}) &= \bigsqcup_{q \in Q} S_{\tau_{-}}(q) \bigsqcup S_{\tau_{-}}(1) = C \bigsqcup_{q \in Q}  S_\beta(q). 
%\end{align*} 
%This old version uses more space...

Finally, we define $\tau \coloneqq \tau_{-} \circ \tau_{+}$. In order to finish the proof, we need to show that~${S_{\tau}(k) = S_\beta(k)}$ for all~$k \in \ZZ$. We start by noting that 
\[ \supp(\tau) \subseteq \supp(\tau_{+}) \bigcup \supp(\tau_{-}) = \left( \bigsqcup_{q \in Q} S_\beta(q) \right) \bigsqcup \left( \bigsqcup_{p \in P} S_\beta(p) \right) \bigsqcup \left(  \bigsqcup_{p \in P} \bigsqcup_{j=1}^p D_{p,j} \right).    \]
We are going to analyze this support piece by piece. We begin by fixing some $q \in Q$ and consider $S_\beta(q)$. Since $S_\beta(q) \subseteq Y \setminus \supp(\tau_{+})$ we have 
\[ S_\beta(q) \xmapsto[\text{lag } 0]{\tau_{+}} S_\beta(q) \xmapsto[\text{lag } q]{\tau_{-}} C_{q, \vert q \vert}. \]
This means that $S_\beta(q) \subseteq S_\tau(q)$. We similarly have $S_\beta(p) \subseteq S_\tau(p)$ for each $p \in P$ since~${D_{p,p} \subseteq Y \setminus \supp(\tau_{-})}$.
For the last part, we consider the sets $D_{p,j}$ for~$p \in P$ and~${1 \leq j \leq p}$. For $j = 1$ we find that 
\[ D_{p,1} \xmapsto[\text{lag } -1]{\tau_{+}} S_\beta(p) \xmapsto[\text{lag } 1]{\tau_{-}} \tau_{-}(S_\beta(p)) \]
because $ S_\beta(p) \subseteq D = C$, which maps with lag $1$ by $\tau_{-}$. As the total lag is $1-1 = 0$, we get that $D_{p,1} \subseteq S_\tau(0)$. When $j \geq 2$ we similarly have
\[ D_{p,j} \xmapsto[\text{lag } -1]{\tau_{+}} D_{p,j-1} \xmapsto[\text{lag } 1]{\tau_{-}} \tau_{-}(D_{p,j-1}) \]
since $D_{p,j-1} \subseteq C$. Hence $D_{p,j} \subseteq S_\tau(0)$ as well. If we now set \[Z \coloneqq \left( \bigsqcup_{q \in Q} S_\beta(q) \right) \bigsqcup \left( \bigsqcup_{p \in P} S_\beta(p) \right) \bigsqcup \left(  \bigsqcup_{p \in P} \bigsqcup_{j=1}^p D_{p,j} \right) \subseteq Y \setminus \supp(\tau)  \] and decompose $Y$ as
\[Y = \left( \bigsqcup_{q \in Q} S_\beta(q) \right) \bigsqcup \left( \bigsqcup_{p \in P} S_\beta(p) \right) \bigsqcup \left(  \bigsqcup_{p \in P} \bigsqcup_{j=1}^p D_{p,j} \right) \bigsqcup \left( Y \setminus Z \right)  \]
then we have seen that 
\[ S_\beta(q)  \subseteq S_\tau(q), \quad S_\beta(p)  \subseteq S_\tau(p), \quad D_{p,j}  \subseteq S_\tau(0), \quad Y \setminus Z   \subseteq S_\tau(0). \]
%\begin{align*}
%S_\beta(q) & \subseteq S_\tau(q), \\
%S_\beta(p) & \subseteq S_\tau(p), \\
%D_{p,j} & \subseteq S_\tau(0), \\
%Y \setminus Z &  \subseteq S_\tau(0).
%\end{align*}
% This old version uses more space...
Since both of these form partitions of $Y$ we must actually have equality here. This means that $S_\beta(k) = S_\tau(k)$ for all $k \neq 0$. And then $S_\beta(0) = S_\tau(0)$ as well.
% and we are done 

Let us now comment on what happens if one of $P$, $Q$ or $S_\alpha(0) \cap A$ are empty. All three cannot be empty since $\supp(\alpha) \neq \emptyset$. And we claim that $P = \emptyset$ if and only if $Q = \emptyset$. Arguing by contradicition, if $P \neq \emptyset$ and $Q = \emptyset$, then Equation~\eqref{eq:homology2} says that $[1_D] = 0$ in $H_0(\mathcal{H}_E)$, so by Corollary~\ref{cor:zeroCancel} $D = \emptyset$. But this forces $P = \emptyset$. Having $P = \emptyset$ and $Q \neq \emptyset$ is ruled out similarly. In the case of $P = \emptyset = Q$ we have that $A = \supp(\alpha) \subseteq S_\alpha(0)$, which means that $\alpha \in \llbracket \mathcal{H}_E  \vert_Y \rrbracket$ (since $U_\alpha \subseteq  c_E^{-1}(0)$). And then we are done since this groupoid is AF and hence has Property~TR. The last possibility is that $S_\alpha(0) \cap A = \emptyset$ and $P, Q$ are both non-empty. In this case the proof above goes through by removing everything indexed by $k=0$. This finishes the proof at large.
\end{proof}

Having established Property~TR for strongly connected graphs with infinite emitters, we deduce the AH~conjecture for these from~\cite[Theorem~4.4]{MatProd}. But as we saw in 
Proposition~\ref{prop:AH} %Definition~\ref{def:AH}
the assumptions in the AH~conjecture for graph groupoids are slightly weaker than strong connectedness. For completeness we want to show that all graph groupoids covered by the assumptions satisfy the conjecture. Using another one of Sørensen's moves on graphs, namely \emph{source removal}, we can actually reduce this to the strongly connected situation.  

\begin{corollary}\label{cor:AH}
Let $E$ be a graph satisfying the AH~criteria and let $Y \subseteq \mathcal{G}_E^{(0)} = \partial E$ be clopen. Then the AH~conjecture is true for $\mathcal{G}_E  \vert_Y$. 
\end{corollary}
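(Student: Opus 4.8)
The plan is to deduce the AH~conjecture from Property~TR and then to reduce an arbitrary graph satisfying the AH~criteria to the strongly connected case already settled in Theorem~\ref{thm:TR} and in Matui's work. First I would note that since $E$ satisfies the AH~criteria, the groupoid $\mathcal{G}_E \vert_Y$ is effective, minimal (by cofinality together with the fact that every vertex reaches every singular vertex) and has a compact Cantor unit space, and it is purely infinite by Proposition~\ref{prop:PI}. Consequently \cite[Theorem~4.4]{MatProd} (see also Remark~\ref{rem:TR}) reduces the AH~conjecture for $\mathcal{G}_E \vert_Y$ to showing that $\mathcal{G}_E \vert_Y$ has Property~TR, so the entire task becomes establishing Property~TR.

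Next I would isolate the strongly connected core. By the structure described after Definition~\ref{def:AH}, $E$ has a unique nontrivial strongly connected component $E_{\mathrm{sc}}$, which contains every cycle and every infinite emitter; no edge leaves $E_{\mathrm{sc}}$, every vertex outside it is regular and supports no cycle, and every path from such a vertex eventually enters $E_{\mathrm{sc}}$. The transient part of $E$ is therefore a finite acyclic graph, so entry times into $E_{\mathrm{sc}}$ are uniformly bounded. This has two consequences: the clopen set $Y_{\mathrm{sc}} \coloneqq \bigsqcup_{v \in E_{\mathrm{sc}}^0} Z(v)$ meets every $\mathcal{G}_E$-orbit (shift any boundary path until it enters $E_{\mathrm{sc}}$), so it is $\mathcal{G}_E$-full; and since no edge leaves $E_{\mathrm{sc}}$ one has the identification $\mathcal{G}_E \vert_{Y_{\mathrm{sc}}} = \mathcal{G}_{E_{\mathrm{sc}}}$. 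This is precisely the effect of iterated \emph{source removal}~\cite{Sorensen}: each source of the transient acyclic part corresponds to restricting $\mathcal{G}_E$ to the full clopen complement of that source's cylinder set, and after finitely many removals only $E_{\mathrm{sc}}$ remains.

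I would then transport the given clopen $Y$ into the core. Because $\mathcal{G}_E$ is minimal and purely infinite and $Y_{\mathrm{sc}}$ is a nonempty clopen set, a standard comparison argument (using Proposition~\ref{prop:PI}) produces a compact bisection $U \subseteq \mathcal{G}_E$ with $s(U) = Y$ and $r(U) = Y' \subseteq Y_{\mathrm{sc}}$ clopen; conjugation by $U$ gives a groupoid isomorphism $\mathcal{G}_E \vert_Y \cong \mathcal{G}_E \vert_{Y'} = \mathcal{G}_{E_{\mathrm{sc}}} \vert_{Y'}$, so it remains to verify Property~TR for a restriction of $\mathcal{G}_{E_{\mathrm{sc}}}$. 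Here I would split into two cases according to the dichotomy built into the AH~criteria. If $E$ has an infinite emitter, it lies in $E_{\mathrm{sc}}$, so $E_{\mathrm{sc}}$ is a strongly connected graph with finitely many vertices and an infinite emitter and Theorem~\ref{thm:TR} applies directly. If $E$ has no infinite emitter, then $E$ is finite, hence $E_{\mathrm{sc}}$ is a finite strongly connected graph which is not a cycle graph (Condition~(L) forbids this), and Matui's result in~\cite{MatTFG} gives Property~TR for all its restrictions. In either case $\mathcal{G}_{E_{\mathrm{sc}}} \vert_{Y'}$, and therefore the isomorphic groupoid $\mathcal{G}_E \vert_Y$, has Property~TR, whence the AH~conjecture holds.

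The hard part will be the reduction in the non-strongly-connected case: justifying the clean identification $\mathcal{G}_E \vert_{Y_{\mathrm{sc}}} = \mathcal{G}_{E_{\mathrm{sc}}}$ through source removal, and then moving the arbitrary clopen set $Y$ into $Y_{\mathrm{sc}}$ by an honest bisection so as to realize $\mathcal{G}_E \vert_Y$ as an isomorphic copy of a restriction of the core groupoid. Everything else is bookkeeping: once the problem sits inside a strongly connected graph groupoid, Theorem~\ref{thm:TR} and Matui's theorem close the argument, and the passage from Property~TR back to the AH~conjecture is supplied by pure infiniteness and \cite[Theorem~4.4]{MatProd}.
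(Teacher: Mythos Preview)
Your proposal is correct and follows essentially the same strategy as the paper's proof: reduce the AH~conjecture to Property~TR via pure infiniteness and \cite[Theorem~4.4]{MatProd}, pass to the strongly connected component by source removal, transport the given clopen $Y$ into the core via a compact bisection, and then invoke Theorem~\ref{thm:TR} or Matui's result according to whether $E$ has an infinite emitter. Your direct identification $\mathcal{G}_E \vert_{Y_{\mathrm{sc}}} = \mathcal{G}_{E_{\mathrm{sc}}}$ is a bit cleaner than the paper's route (which goes through move equivalence and the Kakutani equivalence from \cite{CRS}), and the bisection you need is supplied by \cite[Proposition~4.11]{MatTFG} rather than by Proposition~\ref{prop:PI} itself.
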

\begin{proof}
As discussed in Subsection~4.2, the graph $E$ has a single nontrivial strongly connected component which contain all infinite emitters. The vertices which lie outside this component and the edges they emit form an acyclic subgraph with sources which connect to the nontrivial connected component. By repeatedly applying Sørensen's move~(S) from~\cite[Section~3]{Sorensen} we can remove all the vertices lying outside the strongly connected component of $E$. This results in a graph $F$ which is strongly connected and which is move equivalent to $E$. By the results in~\cite{CRS}~$\mathcal{G}_F$ is Kakutani equivalent to~$\mathcal{G}_E$. Hence there are full clopen subsets~${X \subseteq \mathcal{G}_E^{(0)}}$ and $Z \subseteq \mathcal{G}_F^{(0)}$ such that $\mathcal{G}_E  \vert_X \cong \mathcal{G}_F  \vert_Z$. Appealing to~\cite[Proposition 4.11]{MatTFG} we can find a compact bisection  $U \subseteq \mathcal{G}_E$ satisfying $s(U) = Y$ and $r(U) \subseteq X$. And then 
\[ \mathcal{G}_E  \vert_Y \cong \mathcal{G}_E \vert_{r(U)} = \left( \mathcal{G}_E  \vert_X \right) \vert_{r(U)} \cong \left( \mathcal{G}_F  \vert_Z \right) \vert_{Z'}  =  \mathcal{G}_F \vert_{Z'}   \]
for some clopen set $Z' \subseteq Z \subseteq \mathcal{G}_F^{(0)}$.

If $E$ has infinite emitters, then the result follows from applying Theorem~\ref{thm:TR} to~$\mathcal{G}_F \vert_{Z'}$. And if $E$ is finite it similarly follows from the results in~\cite[Subsection~6.4]{MatTFG}. 
\end{proof} 

% In particular, the AH~conjecture holds for $\mathcal{G}_E$ whenever $E$ is a strongly connected graph with finitely many vertices and~$E \neq C_n$.
% Say this in Thm B? 
% Kan alltids bytte ``neq C_n'' med Condition (L) hvis vi syns det ser bedre ut...

\begin{remark}\label{rem:infiniteVertex}
The finiteness assumption on the set of vertices is actually only needed to guarantee that we can apply Lemma~\ref{lem:longlemma}, by first applying Lemma~\ref{lem:moveT}. Hence Theorem~\ref{thm:TR} also applies to strongly connected graphs with infinitely many vertices, provided that the graph satisfies the hypotheses of Lemma~\ref{lem:longlemma}. Namely that there exists an infinite emitter which supports infinitely many loops and from which there is at least one edge to any other vertex.
\end{remark}

% Can only say that about Property TR, not about AH. Because the map j is not defined in the non-compact case.
% Why it doesn't work for infitely many vertices even if we choose $Y$ compact? Basically that we need to change the graph (using move (T)) before we get the set $A$. Can't change it after, for then we also change $A$... so essentially thats why the approach doesnt work.   

\section{Examples and applications}\label{sec:examples}

\subsection{Groupoid models for Cuntz algebras}
Let $E_n$ denote the graph with one vertex and $n$ loops for $2 \leq n \leq \infty$.
% $n \in \{2,3,\ldots, \infty \}$ XXX 
% $n \in \mathbb{N} \cup \{\infty\}$.
The graph \mbox{$C^*$-algebras} of these graphs are the Cuntz algebras, that is~${C^*(E_n) \cong \mathcal{O}_n}$, whose $K$-theory is given by $\mathbb{Z}_n$ and $0$ respectively (where~$\mathbb{Z}_\infty$ means~$\ZZ$). 

Let us now consider our main motivating example, namely the graph
\[ \begin{tikzpicture}[vertex/.style={circle, draw = black, fill = black, inner sep=0pt,minimum size=5pt}, implies/.style={double,double equal sign distance,-implies}]
\node at (-2,0.5) {$E_\infty$};
\node[vertex] (a) at (0,0) {};

\path (a) 	edge[implies, thick, loop, min distance = 20mm, looseness = 10, out = 45, in = 135]  node[above] {$(\infty)$} (a);
\end{tikzpicture} \] 
and its graph groupoid $\mathcal{G}_{E_\infty}$. By Theorem~\ref{thm:graphhom} $H_0(\mathcal{G}_{E_\infty}) \cong \ZZ$ and $H_1(\mathcal{G}_{E_\infty}) \cong 0$. So the exact sequence in the AH~conjecture for $\mathcal{G}_{E_\infty}$ collapses to
\[\begin{tikzcd}
\mathbb{Z}_2 \arrow{r}{j} & \llbracket \mathcal{G}_{E_\infty}\rrbracket_{\text{ab}} \arrow{r} & 0.
\end{tikzcd} \]
This leaves two possibilities for the abelianization $\llbracket \mathcal{G}_{E_\infty}\rrbracket_{\text{ab}}$: 
\begin{enumerate}
\item Either $\llbracket \mathcal{G}_{E_\infty}\rrbracket_{\text{ab}}$ is trivial (in which case $\llbracket \mathcal{G}_{E_\infty} \rrbracket$ is simple),
\item or $\llbracket \mathcal{G}_{E_\infty}\rrbracket_{\text{ab}}$ is isomorphic to $\ZZ_2$ (in which case $\mathcal{G}_{E_\infty}$ has the strong AH~property).
\end{enumerate}
For $2 \leq n < \infty$ the topological full group $\llbracket \mathcal{G}_{E_n} \rrbracket$ is isomorphic to the Higman--Thompson group $V_{n,1}$~\cite{MatTFG}, and we have 
\[\llbracket \mathcal{G}_{E_n}\rrbracket_{\text{ab}} \cong (V_{n,1})_\text{ab} \cong \begin{cases}
\ZZ_2 & \quad n \text{ odd}, \\
0 & \quad n \text{ even}.
\end{cases}   \]
Although we have not been able to decide which is the case for $\llbracket \mathcal{G}_{E_\infty}\rrbracket_{\text{ab}}$, we can still deduce some structural properties of the topological full group $\llbracket \mathcal{G}_{E_\infty}\rrbracket$.
% We can think of~$\llbracket \mathcal{G}_{E_\infty} \rrbracket$ as an ``infinite Thompson group''...
% Maybe ``infinite'' V has interesting properties?

Theorem~4.16 in~\cite{MatTFG} shows not only that the commutator subgroup $\DD(\llbracket \mathcal{G}_{E_\infty} \rrbracket)$ is simple, it is also contained in any nontrivial normal subgroup of $\llbracket \mathcal{G}_{E_\infty} \rrbracket$. This means that~$\llbracket \mathcal{G}_{E_\infty} \rrbracket$ either is simple itself, or contains precisely one nontrivial normal subgroup, namely~$\DD(\llbracket \mathcal{G}_{E_\infty} \rrbracket)$ (of index $2$).  The group~$\llbracket \mathcal{G}_{E_\infty} \rrbracket$ is nonamenable~\cite{MatTFG}, but does have the Haagerup property~\cite{NO}. One can also deduce that $\llbracket \mathcal{G}_{E_\infty} \rrbracket$ is \mbox{$C^*$-simple} by the results in~\cite{BS19}. Finally, it is shown below that $\llbracket \mathcal{G}_{E_\infty} \rrbracket$ is not finitely generated.
%In particular, $\llbracket \mathcal{G}_{E_\infty} \rrbracket$ is solvable. But don't think anyone cares about solvability here. In any case, it follows immediately from Matui's result saying that the commutator is simple, for then the normal series terminates at once.....! 

% O_infty is purely infinite => no trace => PIr does not weakly contain trivial rep => C*-simplicity.
%Actually, this holds for all the TFGs of graphs satisyfing the AH criteria then, since all their graph algebras are Kirchberg algebras...

%We believe that $\llbracket \mathcal{G}_{E_\infty}\rrbracket_{\text{ab}} \cong \ZZ_2$ and that this isomorphism is implemented by looking at an element of the topologigical, viewing it as a permutation and consider its parity (i.e. number of classical transpositions) on its support...    

%Can the proof on page 19 of~\cite{MatSurvey} be of any help here?

% Borrow examples from the KU-note? Also, maybe use the KU-note to verify/disprove split AH for general graph groupoids?

\subsection{Simplicity and non-finite generation of topological full groups}
 
We would have liked to decide whether all graph groupoids of graphs satisfying the AH~criteria have the strong AH~property, as we know SFT-groupoids do. Matui's proof of this for SFT-groupoids in~\cite{MatTFG} relies on the construction of a finite presentation for their topological full groups. However, if a graph has infinite emitters, then the topological full group of its graph groupoid will not even be finitely generated.

\begin{proposition}
Let $E$ be a graph with at least one infinite emitter and suppose $E$ satisfies Condition~(L). Then $\llbracket \mathcal{G}_E \rrbracket$ is not finitely generated.
\end{proposition}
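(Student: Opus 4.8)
The plan is to fix an infinite emitter $w \in E^0_{\text{sing}}$, enumerate the infinitely many edges it emits as $wE^1 = \{e_1, e_2, \dots\}$, and attach to every finite set $L \subseteq wE^1$ a subgroup $H_L \leq \llbracket \mathcal{G}_E \rrbracket$ so that (i) every element of $\llbracket \mathcal{G}_E \rrbracket$ lies in some $H_L$, and (ii) no single $H_L$ is all of $\llbracket \mathcal{G}_E \rrbracket$. Granting this, if $\llbracket \mathcal{G}_E \rrbracket$ were generated by finitely many elements $g_1, \dots, g_k$, then choosing one finite $L$ with $g_1, \dots, g_k \in H_L$ (possible by (i), after enlarging $L$ finitely) would force $\llbracket \mathcal{G}_E \rrbracket = \langle g_1, \dots, g_k\rangle \subseteq H_L$, contradicting (ii).

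The subgroups are defined through a single combinatorial invariant. For a boundary path $x \in \partial E$, let $p_L(x)$ denote the first edge occurring in $x$ that is emitted by $w$ and does not belong to $L$; this is a partial function, undefined exactly when no such edge occurs. Define $H_L$ to be the set of $\alpha \in \llbracket \mathcal{G}_E \rrbracket$ preserving $p_L$, in the sense that $p_L(\alpha(x)) = p_L(x)$ for every $x$, with the domain of definedness also preserved. A short computation shows $H_L$ is a subgroup: if $\alpha,\beta$ preserve $p_L$, then $p_L(\beta\alpha(x)) = p_L(\alpha(x)) = p_L(x)$, and substituting $\alpha^{-1}(x)$ in the defining identity yields invariance under inverses.

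For (i) I would use that every $\alpha = \pi_U \in \llbracket \mathcal{G}_E \rrbracket$ is given by a compact bisection $U^\perp = \bigsqcup_{i=1}^m Z(\mu_i, F_i, \nu_i)$ involving only finitely many edges; letting $L$ consist of all $w$-emitted edges appearing in the paths $\mu_i,\nu_i$, the homeomorphism $\alpha$ acts on each source cylinder $Z(\nu_i \setminus F_i)$ by replacing the prefix $\nu_i$ with $\mu_i$. Since all $w$-emitted edges of $\mu_i$ and $\nu_i$ lie in $L$, the first $w$-emitted edge outside $L$ always sits in the untouched common tail, so $\alpha \in H_L$. For (ii) I would exhibit, for any finite $L$, a transposition violating $p_L$-invariance, using that $wE^1 \setminus L$ is infinite. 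If some fresh edge $e \in wE^1 \setminus L$ satisfies $r(e) \neq w$, then $Z(e)$ and $Z(r(e))$ are disjoint and the compact bisection $V = Z(e, \emptyset, r(e)) \subseteq c_E^{-1}(1)$ yields the transposition $\tau_e := \pi_{\widehat{V}}$ sending $ey \mapsto y$; picking $y \in Z(r(e))$ whose first $w$-emitted edge outside $L$ is not $e$ (such $y$ exists, by extending a path from $r(e)$ to a boundary path avoiding $e$, or simply because $p_L(y)$ may be undefined) gives $p_L(ey) = e \neq p_L(y)$, so $\tau_e \notin H_L$. If instead cofinitely many edges of $w$ are loops, I would take two distinct fresh loops $e,e'$ and use $V = Z(e, \emptyset, e')$, whose transposition sends $e'y \mapsto ey$ and so turns $p_L = e'$ into $p_L = e$.

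The routine points to nail down are that $H_L$ genuinely is a subgroup respecting the domain of $p_L$, the description of the action of $\alpha$ on cylinders, and the elementary existence of the witness paths $y$; these need only Condition~(L), to place us in the effective setting where $\llbracket \mathcal{G}_E \rrbracket$ is defined and bisections determine homeomorphisms, together with the infinitude of $wE^1$. The main conceptual step, and the crux of the argument, is the identification of the correct invariant $p_L$: it is rigid under the only local moves the groupoid permits (``replace a finite prefix''), yet is genuinely altered by the transpositions born from the infinitely many edges of $w$. It is precisely this invariant that replaces the naive ``all supports lie in a fixed compact set'' heuristic, which fails here because the entire infinite-emitter structure already lives inside the single compact open set $Z(w)$.
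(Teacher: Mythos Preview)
Your argument is correct and shares the paper's core idea---finitely many elements of $\llbracket \mathcal{G}_E \rrbracket$ only ``see'' finitely many of the edges emitted by $w$, so a transposition built from fresh edges cannot lie in the subgroup they generate---but your packaging via the invariant $p_L$ is a genuine refinement. The paper argues directly that the property ``the defining bisection decomposes using only $w$-edges from $\{e_1,\ldots,e_{M-1}\}$'' is closed under products and inverses; this is true but requires tracking how products of basic bisections $Z(\mu,F,\nu)$ decompose, which the paper leaves implicit. Your subgroup $H_L$ makes this closure automatic, since preserving an invariant is manifestly multiplicative. Your case split in (ii) is also more careful: the paper's witness $V = Z(e_M, e_{M+1})$ tacitly needs $r(e_M) = r(e_{M+1})$, which can fail when $E^0$ is infinite and the fresh edges of $w$ have pairwise distinct ranges, whereas your Case~A (a non-loop fresh edge $e$, with $V = Z(e,\emptyset,r(e))$) handles exactly this situation. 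One small point worth making explicit in Case~A: the existence of $y \in Z(r(e))$ with $p_L(y) \neq e$ is cleanest by observing that $w$ is itself singular, so either some boundary path from $r(e)$ never visits $w$ (and then $p_L(y)$ is undefined), or a shortest path from $r(e)$ to $w$ is a finite boundary path none of whose edges is emitted by $w$, again giving $p_L$ undefined.
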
 % Do we really need (L) for the description of bisections in TFG paper???
\begin{proof}
Let $w \in E^0_\text{sing}$ be an infinite emitter and enumerate the edges emitted by $w$ as~${w E^1 = \{ e_1, e_2, e_3, \ldots \}}$. Suppose we are given finitely many elements~${\alpha_1, \alpha_2, \ldots, \alpha_N}$ from~$\llbracket \mathcal{G}_E \rrbracket$. According to~\cite[Proposition~9.4]{NO} we can decompose each full bisection defining these elements as 
\[U_{\alpha_j} = \left( \bigsqcup_{i=1}^{k_j} Z(\mu_{i,j}, F_{i,j}, \nu_{i,j}) \right) \sqcup (\partial E \setminus \supp(\alpha_j)). \]
Among the paths $\mu_{i,j}$ and $\nu_{i,j}$ and in the sets of forbidden edges $F_{i,j}$, only finitely many of the edges in $w E^1$ can occur. Pick an $M \in \NN$ such that $e_M, e_{M+1}, \ldots$ do not occur in any of these. Any product of the $\alpha_j$'s and their inverses will again result in an element of $\llbracket \mathcal{G}_E \rrbracket$ whose defining bisection decomposes similarly as above. And the crucial point is that none of the edges $e_M, e_{M+1}, \ldots$ will occur in its decomposition either. This means that elements such as $\pi_{\widehat{V}}$ for $V = Z(e_M, e_{M+1})$ does not belong to the subgroup generated by the elements $\alpha_1, \alpha_2, \ldots, \alpha_N$, and consequently $\llbracket \mathcal{G}_E \rrbracket$ cannot be finitely generated.
\end{proof} % Fortsatt bittepittelitt usikker på det beviset altså. Fordi at representasjonen til en bisecksjon ikke er unik. Kan potensielt skrives på flere måter. Men samtidig ser jeg virkelig ikke hvordan vi bare kan få putte inn nye kanter her uten å endre biseksjonen da...

A consequence of SFT-groupoids having the strong AH~property is that their topological full groups are simple if and only if the zeroth homology group is $2$-divisible~\cite[Corollary~6.24.(3)]{MatTFG}. This is the case for e.g.~the graphs $E_n$ above when $n$ is even. For graphs with infinite emitters, however, the sitatuation is quite different. What we observed for $\mathcal{G}_{E_\infty}$ above, namely that the strong AH~property rules out the simplicity of the topological full group and vice versa, is actually a general phenomenon. This is due to $H_0(\mathcal{G}_E)$ never being $2$-divisible when $E$ has singular vertices.

\begin{proposition}
Let $E$ be a graph satisfying the AH~criteria and having at least one infinite emitter. If $\mathcal{G}_E$ has the strong AH~property, then $\llbracket \mathcal{G}_E \rrbracket$ is not  simple. 
\end{proposition}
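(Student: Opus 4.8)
The plan is to argue directly: assuming the strong AH~property, I would exhibit a nontrivial proper normal subgroup of $\llbracket \mathcal{G}_E \rrbracket$, namely its commutator subgroup $\DD(\llbracket \mathcal{G}_E \rrbracket)$. The two pillars are that the strong AH~property forces a nonzero group into the abelianization $\llbracket \mathcal{G}_E \rrbracket_{\text{ab}}$, making $\DD(\llbracket \mathcal{G}_E \rrbracket)$ \emph{proper}, while Matui's work guarantees $\DD(\llbracket \mathcal{G}_E \rrbracket)$ is simple, hence \emph{nontrivial}. A group with a nontrivial proper normal subgroup is not simple.

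First I would establish the homological input, namely that $H_0(\mathcal{G}_E) \otimes \ZZ_2 \neq 0$, equivalently that $H_0(\mathcal{G}_E)$ is not $2$-divisible. By Theorem~\ref{thm:graphhom} we have $H_0(\mathcal{G}_E) \cong K_0(C^*(E))$. Since $E$ satisfies the AH~criteria, the vertex set $E^0$ is finite, so $K_0(C^*(E))$ is a quotient of $\ZZ^{|E^0|}$ and is therefore finitely generated. By the rank estimate recorded after Theorem~\ref{thm:graphhom}, $\rank(K_0(C^*(E))) \geq |E^0_{\text{sing}}| \geq 1$, the last inequality holding because $E$ has at least one infinite emitter. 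A finitely generated abelian group of positive rank splits off a free summand $\ZZ$ and hence is not $2$-divisible, so $H_0(\mathcal{G}_E) \otimes \ZZ_2 \neq 0$.

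Next I would feed this into the AH~sequence~\eqref{eq:AH}. The strong AH~property means precisely that the map $j$ is injective, so $j\bigl(H_0(\mathcal{G}_E) \otimes \ZZ_2\bigr)$ is a nonzero subgroup of $\llbracket \mathcal{G}_E \rrbracket_{\text{ab}}$. Thus $\llbracket \mathcal{G}_E \rrbracket_{\text{ab}} \neq 0$, which is to say that $\DD(\llbracket \mathcal{G}_E \rrbracket)$ is a proper (normal) subgroup of $\llbracket \mathcal{G}_E \rrbracket$. On the other hand, since $E$ satisfies the AH~criteria, $\mathcal{G}_E$ is minimal (Proposition~\ref{prop:AH}) and purely infinite (Proposition~\ref{prop:PI}); by the result quoted in Remark~\ref{rem:TR}, $\DD(\llbracket \mathcal{G}_E \rrbracket)$ is therefore simple and in particular nontrivial. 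Combining the two, $\DD(\llbracket \mathcal{G}_E \rrbracket)$ is a nontrivial proper normal subgroup, so $\llbracket \mathcal{G}_E \rrbracket$ is not simple.

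The only step with any real content is the homological one, showing $H_0(\mathcal{G}_E) \otimes \ZZ_2 \neq 0$; the rest is formal group theory. Even that step is disarmed by the rank bound on $K_0(C^*(E))$ cited above, after which it reduces to the elementary observation that a finitely generated abelian group with a free summand is not $2$-divisible. I therefore do not anticipate any serious obstacle; the main subtlety is simply making sure one invokes \emph{nontriviality} of $\DD(\llbracket \mathcal{G}_E \rrbracket)$ (via its simplicity) and not merely its properness, since properness alone leaves open the degenerate possibility of an abelian simple group.
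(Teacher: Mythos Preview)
Your proof is correct and tracks the paper's almost exactly: both show $H_0(\mathcal{G}_E)\otimes\ZZ_2\neq 0$ via the rank bound $\rank(H_0(\mathcal{G}_E))\geq|E^0_{\text{sing}}|\geq 1$, then use injectivity of $j$ to force $\llbracket \mathcal{G}_E\rrbracket_{\text{ab}}\neq 0$. The only divergence is in the final step. You invoke simplicity of $\DD(\llbracket \mathcal{G}_E\rrbracket)$ to guarantee it is nontrivial; the paper instead simply notes that $\llbracket \mathcal{G}_E\rrbracket$ is non-abelian, which is immediate (any two transpositions with overlapping but distinct supports fail to commute), and a non-abelian group with nontrivial abelianization cannot be simple. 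So the ``subtlety'' you flag is real, but Matui's simplicity theorem is heavier machinery than needed to dispatch it.
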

\begin{proof}
By Theorem~\ref{thm:graphhom} $H_0(\mathcal{G}_E)$ is a finitely generated abelian group whose rank is greater than or equal to the number of singular vertices in $E$. So if $E$ has an infinite emitter, then~$H_0(\mathcal{G}_E) \otimes \ZZ_2$ is nonzero. And if $\mathcal{G}_E$ has the strong AH~property, then this forces~$\llbracket \mathcal{G}_{E}\rrbracket_{\text{ab}} \neq 0$ too. Thus  $\llbracket \mathcal{G}_{E}\rrbracket$ cannot be simple (being non-abelian).
\end{proof}

Whether or not graph groupoids of graphs with infinite emitters all have the strong AH~property can therefore be decided in the negative by finding such a groupoid whose topological full group is simple.

\subsection{Describing the abelianization of the topological full group} We first note that by Remark~\ref{rem:strongvssplit}, the abelianization~$\llbracket \mathcal{G}_{E_\infty}\rrbracket_{\text{ab}}$ is a finitely generated abelian group for any graph $E$ satisfying the AH~criteria. Let us next consider an example where both~$H_0(\mathcal{G}_E)$ and~$H_1(\mathcal{G}_E)$ are nontrivial.  

\begin{example}
Consider the graph $E$ in Figure~\ref{fig:ex}.
\begin{figure}%[h]
\[\begin{tikzpicture}[vertex/.style={circle, draw = black, fill = black, inner sep=0pt,minimum size=5pt}, implies/.style={double,double equal sign distance,-implies}]

\node at (-1,1.5) {$E$};
\node[vertex] (a) at (0,0) {};
\node[vertex] (b) at (2,0) {};
\node[vertex] (c) at (5,0) {};
\node[vertex] (d) at (4.5,2.5) {};

\path (a) edge[thick, decoration={markings, mark=at position 0.99 with {\arrow{triangle 45}}}, postaction={decorate}] (b)

(b) edge[thick, decoration={markings, mark=at position 0.99 with {\arrow{triangle 45}}}, postaction={decorate}] node[above left, near end] {$(3)$}  (d)
	edge[thick, bend left, decoration={markings, mark=at position 0.99 with {\arrow{triangle 45}}}, postaction={decorate}] node[auto] {$(3)$}  (c)
	edge[thick,loop, min distance = 15mm, looseness = 20, out = 45, in = 135, decoration={markings, mark=at position 0.99 with {\arrow{triangle 45}}}, postaction={decorate}] (b)

(c) edge[thick, implies, bend left] node[auto] {$(\infty)$}   (b)

(d) edge[thick,loop, min distance = 15mm, looseness = 20, out = 45, in = 135, decoration={markings, mark=at position 0.99 with {\arrow{triangle 45}}}, postaction={decorate}] node[above] {$(4)$} (d)
	edge[thick, decoration={markings, mark=at position 0.99 with {\arrow{triangle 45}}}, postaction={decorate}] node[auto] {$(3)$} (c)
; %end semicolon
\end{tikzpicture}\]
      \caption{An infinite graph for which~$H_0(\mathcal{G}_E)$ and~$H_1(\mathcal{G}_E)$ are both nontrivial. The numbers in paranthesis indicate the number of edges.}
      \label{fig:ex}
\end{figure}
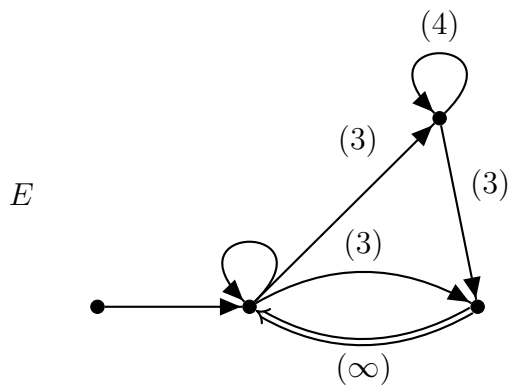
From Theorem~\ref{thm:graphhom} we find that $H_0(\mathcal{G}_E) \cong \ZZ^2 \oplus \ZZ_3$ and $H_1(\mathcal{G}_E) \cong \ZZ$. Hence the AH~exact sequence becomes
\[\begin{tikzcd}
\mathbb{Z}_2 \oplus \mathbb{Z}_2  \arrow{r}{j} & \llbracket \mathcal{G}_{E}\rrbracket_{\text{ab}} \arrow{r}{I_{\text{ab}}} & \ZZ \arrow{r} & 0.
\end{tikzcd} \]
This implies that $\llbracket \mathcal{G}_{E}\rrbracket_{\text{ab}} \cong \ZZ \oplus \im(j)$. Thus $\llbracket \mathcal{G}_{E}\rrbracket_{\text{ab}}$ is isomorphic to either $\ZZ$,  $\ZZ \oplus \mathbb{Z}_2$ or~$\ZZ \oplus \mathbb{Z}_2 \oplus \mathbb{Z}_2$.
\end{example}

The previous example generalizes to the following partial description of the abelianization~$\llbracket \mathcal{G}_{E}\rrbracket_{\text{ab}}$.

\begin{proposition}
Let $E$ be a graph satisfying the AH~criteria and let $\emptyset \neq Y \subseteq \partial E$ be clopen. Then 
\[\llbracket \mathcal{G}_{E} \vert_Y \rrbracket_{\text{ab}} \cong H_1(\mathcal{G}_E) \oplus \im(j),\]
where $H_1(\mathcal{G}_E) \cong \ZZ^M$ and $\im(j) \cong (\ZZ_2)^N$ for nonnegative integers $M, N$.  
\end{proposition}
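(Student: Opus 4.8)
The plan is to assemble the statement from the AH~conjecture (now available through Corollary~\ref{cor:AH}), the freeness of $H_1$, and the elementary $2$-group nature of $\im(j)$.

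First I would invoke Corollary~\ref{cor:AH}, which asserts that the AH~conjecture holds for $\mathcal{G}_E \vert_Y$, i.e.\ that there is an exact sequence
\[
\begin{tikzcd}
H_0(\mathcal{G}_E \vert_Y) \otimes \ZZ_2 \arrow{r}{j} & \llbracket \mathcal{G}_E \vert_Y \rrbracket_{\text{ab}} \arrow{r}{I_{\text{ab}}} & H_1(\mathcal{G}_E \vert_Y) \arrow{r} & 0.
\end{tikzcd}
\]
By Proposition~\ref{prop:AH}, since $E$ satisfies the AH~criteria, $\mathcal{G}_E$ satisfies the hypotheses of the AH~conjecture; in particular $\mathcal{G}_E$ is minimal. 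Hence every $\mathcal{G}_E$-orbit is dense and therefore meets the nonempty open set $Y$, so $Y$ is $\mathcal{G}_E$-full and the inclusion induces isomorphisms $H_n(\mathcal{G}_E \vert_Y) \cong H_n(\mathcal{G}_E)$ for all $n$. Rewriting the sequence through these identifications replaces $H_1(\mathcal{G}_E \vert_Y)$ by $H_1(\mathcal{G}_E)$.

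Next I would apply Theorem~\ref{thm:graphhom}, giving $H_1(\mathcal{G}_E) \cong K_1(C^*(E))$; since the $K_1$-group of a graph \mbox{$C^*$-algebra} is always free abelian and $E^0$ is finite, this is $\ZZ^M$ for some $M \geq 0$. The exact sequence then restricts to a short exact sequence
\[
\begin{tikzcd}
0 \arrow{r} & \im(j) \arrow{r} & \llbracket \mathcal{G}_E \vert_Y \rrbracket_{\text{ab}} \arrow{r}{I_{\text{ab}}} & \ZZ^M \arrow{r} & 0,
\end{tikzcd}
\]
where $\im(j) = \ker(I_{\text{ab}})$. As $\ZZ^M$ is projective, this sequence splits, yielding $\llbracket \mathcal{G}_E \vert_Y \rrbracket_{\text{ab}} \cong \ZZ^M \oplus \im(j) = H_1(\mathcal{G}_E) \oplus \im(j)$. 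To identify $\im(j)$, I would note that $H_0(\mathcal{G}_E) \cong K_0(C^*(E))$ is finitely generated (because $E^0$ is finite), so $H_0(\mathcal{G}_E) \otimes \ZZ_2 \cong H_0(\mathcal{G}_E)/2 H_0(\mathcal{G}_E)$ is a finite-dimensional $\ZZ_2$-vector space; its homomorphic image $\im(j)$ is then also a $\ZZ_2$-vector space, whence $\im(j) \cong (\ZZ_2)^N$ for some $N \geq 0$.

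The argument is essentially a careful bookkeeping assembly of previously established results, so I do not anticipate a genuine obstacle. The only points that require care are (i) confirming that $Y$ meets the standing hypotheses so that both Corollary~\ref{cor:AH} and the fullness isomorphism apply — in particular that $Y$, being a nonempty clopen subset of the perfect zero-dimensional compact space $\partial E$, is again a Cantor space with no isolated points — and (ii) being explicit that the splitting uses only projectivity of $H_1(\mathcal{G}_E)$ and not the (possibly false) injectivity of $j$, so that it is $\im(j)$ rather than all of $H_0(\mathcal{G}_E) \otimes \ZZ_2$ that appears as the complementary summand.
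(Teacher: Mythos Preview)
Your proposal is correct and is exactly the assembly of prior results the paper relies on; the paper does not spell out a proof for this proposition, but the intended argument is precisely the one you give, namely invoking Corollary~\ref{cor:AH}, using minimality of $\mathcal{G}_E$ to identify the homology of the restriction with that of $\mathcal{G}_E$, applying Theorem~\ref{thm:graphhom} to see $H_1(\mathcal{G}_E)$ is free abelian of finite rank, and then splitting the resulting short exact sequence. Your caveat~(i) is unnecessary, since Corollary~\ref{cor:AH} already applies directly to arbitrary clopen $Y \subseteq \partial E$ without any further verification that $Y$ is a Cantor space.
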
 
% Possible variations: 
% 1) as is
% 2) \ZZ^M \oplus \im(j),\] where $M = \rank(H_1(\mathcal{G}_E))$ 
% 3) $\ZZ^M \oplus (\ZZ_2)^N$

\begin{remark}
The integer $N$ in the preceding proposition is necessarily bounded above by the number of ``even summands'' in $H_0(\mathcal{G}_E)$, which in turn is at least $M + \vert E^0_{\text{sing}} \vert$ and at most $\vert E^0 \vert$.  In general, we may only say that~$0 \leq N \leq \vert E^0 \vert$. 
% Recall  that  $0 \leq M \leq \vert E^0_{\text{reg}} \vert$.	(dont really need...)
\end{remark}

\subsection{The cycle graphs} The statement in Theorem~\ref{thm:AHintro} would look cleaner if we did not have to specify that $E$ cannot be a cycle graph. However, this is necessary, as we will see shortly. Let $C_n$ denote the graph consisting of a single cycle with $n$ vertices. Observe that~${\mathcal{G}_{C_n} \cong \mathcal{R}_n \times \ZZ}$ (where $\ZZ$ is viewed as a group), which is a discrete transitive\footnote{A groupoid with only one orbit is called \emph{transitive}.} groupoid with unit space consisting of $n$ points. This is consistent with the \mbox{$C^*$-algebraic} side of things, as we have that~${C_r^*(\mathcal{G}_{C_n}) \cong C^*(C_n) \cong M_n(C(\mathbb{T}))}$ and~${C_r^*(\mathcal{R}_n \times \ZZ) \cong M_n(\mathbb{C}) \otimes C(\mathbb{T}) \cong M_n(C(\mathbb{T}))}$. Since~$\mathcal{G}_{C_n}$ is Kakutani equivalent to~$\ZZ$ and~${K_*(M_n(C(\mathbb{T}))) \cong K_*(C(\mathbb{T})) \cong (\ZZ,\ZZ)}$, Theorem~\ref{thm:graphhom} gives\footnote{We could also have deduced the homology of $\mathcal{G}_{C_n}$ from the group homology $\ZZ$, as these coincide due to their Kakutani equivalence.}
\[ H_0(\ZZ) \cong H_0(\mathcal{G}_{C_n}) \cong \ZZ
 \qquad \text{and} \qquad 
H_1(\ZZ) \cong H_1(\mathcal{G}_{C_n})  \cong \ZZ.  \]
%\begin{align*}
%H_0(\ZZ) \cong H_0(\mathcal{G}_{C_n})  & \cong \ZZ, \\
%H_1(\ZZ) \cong H_1(\mathcal{G}_{C_n}) & \cong \ZZ, \\
%H_n(\ZZ) \cong H_n(\mathcal{G}_{C_n}) &= 0, \quad n \geq 2.
%\end{align*}
But the unit space of $\mathcal{G}_{C_n}$ is finite, hence so is $\llbracket \mathcal{G}_{C_n} \rrbracket$ (it is isomorphic to the symmetric group~$S_n$), and then clearly the index map $I \colon \llbracket \mathcal{G}_{C_n} \rrbracket \to H_1(\mathcal{G}_{C_n})$ cannot be surjective.

%We could also have deduced the homology of $\mathcal{G}_{C_n}$ from the group homology $\ZZ$, as these coincide due to their Kakutani equivalence. %The classifying space of $\ZZ$ is the circle, whose homology is $\ZZ$ in degrees $0$ and $1$, and trivial otherwise, which would have given the same as above. 

%As a side note, what is the groupoid homology  of a general discrete group $\Gamma$? From Brown's book I read that group homology has to do with aspherical spaces. And that $H_n(\Gamma) = H_n(X_\Gamma)$, where $X_\Gamma$ is the unique (path-connected) space (up to homotopy) whose fundamental group is $\Gamma$ and whose higher homotopy groups vanish (the so-called classifying space of $\Gamma$. So we at least have 
%\begin{align*}
%H_0(\Gamma) & \cong \ZZ, \\
%H_1(\Gamma) & \cong \Gamma_\text{ab}.
%\end{align*}
%By Hurewicz, $H_1$ is the abelianization of the fundamental group $\pi_1$. Regarding the homology of the integer group above, the classifying space of $\ZZ$ is the circle whose homology we know to be $\ZZ, \ZZ, 0, \ldots$

% if needed: \newcommand{\etalchar}[1]{$^{#1}$} % Adds a ''+'' in the 7th reference (as there are 5 authors)
%\begin{thebibliography}{AASM17} Put the largest entry into here to set the correct ``width'' of the reference labels
% Scandinavian letters like ``ø'' and ``å'' cause problems. (Like ``Sørensen''.) Manually fix these. Also, there is sometimes a typo in MatProd paper (small ``é'' in ``Étale'' in title for some reason)

\newcommand{\etalchar}[1]{$^{#1}$}

\end{document}